\crefname{assumption}{Assumption}{Assumptions}
\crefname{hypothese}{Hypothese}{Hypotheses}
\crefname{lemma}{Lemma}{Lemmas}
\crefname{theorem}{Theorem}{Theorems}
\crefname{discr}{Discretization}{Discretizations}
\apptocmd{\sloppy}{\hbadness 10000\relax}{}{}
\newcommand{\dual}[1]{\big\langle {#1} \big\rangle}
\newcommand{\norm}[1]{\lVert {#1} \rVert}
\newcommand{\normb}[1]{\big\lVert {#1} \big\rVert}
\newcommand{\ssnm}[1]
{
	\left\vert\kern-0.25ex
	\left\vert\kern-0.25ex
	\left\vert
	{#1}
	\right\vert\kern-0.25ex
	\right\vert\kern-0.25ex
	\right\vert
}
\def\spher@harm#1{%
	\vbox{\hbox{%
			\offinterlineskip
			\valign{&\hb@xt@2\p@{\hss$##$\hss}\vskip.2ex\cr#1\crcr}%
		}\vskip-.36ex}%
}
\def\gshone{\spher@harm{.}}
\def\gshtwo{\spher@harm{.&.}}
\def\gshthree{\spher@harm{.&.&.}}
\let\gsh\spher@harm
\newtheorem{proposition}{Proposition}[section]
\newtheorem{hypothesis}{Hypothesis}[section]
\newtheorem{lemma}{Lemma}[section]
\newtheorem{remark}{Remark}[section]
\newtheorem{theorem}{Theorem}[section]
\numberwithin{equation}{section}
\def\@captype{table}\makeatother
\begin{document}


\title{\Large\bf Pathwise uniform convergence of numerical approximations for a two-dimensional stochastic Navier-Stokes equation with no-slip boundary conditions\thanks{This work was partially supported by the National Natural Science Foundation of China under grants 12301525 and 12171340, and by the Natural Science Foundation of Sichuan Province under grant 2023NSFSC1324.}}
\author[1]{Binjie Li\thanks{libinjie@scu.edu.cn}}
\author[1]{Xiaoping Xie\thanks{xpxie@scu.edu.cn}}
\author[2]{Qin Zhou\thanks{zqmath@cwnu.edu.cn}}
\affil[1]{School of Mathematics, Sichuan University, Chengdu 610064, China}
\affil[2]{School of Mathematics, China West Normal University, Nanchong 637002, China}
\date{}

\maketitle

\begin{abstract}
This paper investigates the pathwise uniform convergence in probability of fully discrete finite-element
approximations for the two-dimensional stochastic Navier-Stokes equations with multiplicative noise,
subject to no-slip boundary conditions.
We demonstrate that the full discretization
achieves nearly $ 3/2$-order convergence in space and nearly half-order convergence in time.
\end{abstract}



\medskip\noindent{\bf Keywords:}
stochastic Navier-Stokes equation, no-slip boundary conditions, finite element method, pathwise uniform convergence

\section{Introduction}

Let \(0 < T < \infty\), and consider \(\mathcal{O} \subset \mathbb{R}^2\) as a bounded, connected domain with a sufficiently smooth boundary \(\partial\mathcal{O}\). We study the following incompressible stochastic Navier-Stokes equation (SNSE):
\begin{small}
\begin{equation}
  \label{eq:model}
  \begin{cases}
    \begin{aligned}
      \mathrm{d}y(t,x) &= \Bigl[ \Delta y(t,x) - (y(t,x) \cdot \nabla)y(t,x) - \nabla \varphi(t,x) \Bigr]\,\mathrm{d}t \\
                       &\quad + \sum_{n\in\mathbb{N}} \bigl[ f_n(x,y(t,x)) - \nabla\widetilde\varphi_n(t,x) \bigr] \, \mathrm{d}\beta_n(t), && t\in[0,T], \, x \in \mathcal{O}, \\
      \nabla \cdot y(t,x) &= 0, && t\in[0,T], \, x \in \mathcal{O}, \\
      y(t,x) &= 0, && t \in [0,T], \, x \in \partial\mathcal{O}, \\
      y(0,x) &= y_0(x), && x \in \mathcal{O}.
    \end{aligned}
  \end{cases}
\end{equation}
\end{small}
\hskip -.3em In this system, \(y\) represents the velocity field of interest, \(\varphi\) denotes the pressure field, and \((\widetilde\varphi_n)_{n\in\mathbb N}\) is a sequence of unknown auxiliary scalar processes that ensures the divergence-free property of the diffusion term. The initial velocity field is given by \(y_0\), \((\beta_n)_{n \in \mathbb{N}}\) is a sequence of independent Brownian motions, and the diffusion coefficients \((f_n)_{n \in \mathbb{N}}\) will be detailed later.

Due to their important applications to the theory of turbulent flows and its significance from
a pure mathematics perspective, the SNSEs have garnered significant interest.
These equations have been extensively studied in the literature;
see \cite{Bensoussan1973,Brzezniak1991,Capinski1991,Flandoli2023,Menaldi2002} and the reference therein.
Consequently, over the past decades, the numerical analysis of SNSEs,
as well as their simplified variants, the stochastic Stokes equations,
has attracted considerable interest from the numerical community.

For the cases of space-periodic boundary conditions, significant advancements have been observed in recent years.
We briefly summarize some recent contributions as follows.
D\"orsek~\cite{Dorsek2012} derived rates of convergence for full discretizations that
use spectral Galerkin method in space and semigroup splitting or cubature methods in time,
for a two-dimensional SNSE with additive noise.
In the context of finite-element-based full discretizations,
Carelli and Prohl~\cite{Prohl2012} made notable progress by establishing linear convergence in
space and almost $\frac{1}{4}$-order convergence in time with respect to convergence in probability.
Their findings were later improved upon by Breit and Dodgson~\cite{Breit2021},
who demonstrated enhanced theoretical results with linear convergence in space and
nearly $\frac{1}{2}$-order convergence in time.
Bessaih et al.~\cite{Bessaih2014} derived almost $\frac{1}{2}$-order temporal convergence in probability for
a splitting up method.
Bessaih and Millet \cite{Bessaih2019} established strong $ L^2 $ convergence for both the Euler scheme and
a splitting scheme, and they subsequently extended their analysis to a full discretization that
 combines the Euler scheme for temporal approximation with the finite element method for spatial approximation,
proving strong $L^2$ convergence once more \cite{Bessaih2022}.
Hausenblas and Randrianasolo \cite{Hausenblas2019} studied a time-discretization
with penalty-projection method, and derived a convergence rate of $\frac14$ in probability.
A recent contribution by Breit et al.~\cite{Breit2024} derived a convergence rate of $\frac{1}{2}$
in the mean square error for a time-discretization of a two-dimensional SNSE with transport noise.
We also direct the reader to \cite{Feng2021,FengQiu2021} and the references therein for the numerical analysis
of stochastic Stokes equations.
Of particular note is the recent work by Li et al.~\cite{LiMaSun2024}, which established a strong convergence rate of
$ O(h^2 + \tau^{1/2}) $ in the $ L^\infty(0,T;L^2(\Omega;L^2)) $ norm for finite-element-based approximations
of two- and three-dimensional stochastic Stokes equations under the stress boundary conditions.

Despite significant progress in this field, the numerical analysis of SNSEs subject to no-slip
boundary conditions remains a relatively underdeveloped area of research. A fundamental difficulty
arises from the fact that the Helmholtz projection \(\mathcal{P}\) fails to preserve the
zero-trace property. This limitation implies that the range of \(\mathcal{P}\) is not
necessarily contained within the domain of \(A_2^{1/4}\) (let alone \(A_2^{1/2}\)),
where \(A_2\) represents the realization of the negative Stokes operator in \(L^2(\mathcal{O}; \mathbb{R}^2)\)
with homogeneous Dirichlet boundary conditions.
Moreover, the important identity $ \int_{\mathcal O} \big((u\cdot\nabla) u\big) \cdot A_2 u \, \mathrm{d}x = 0 $,
which holds on the domain of the Stokes operator under space-periodic boundary conditions,
fails to hold under no-slip boundary conditions.
These mathematical obstacles, combined with the temporal irregularities induced by the
Brownian motions, substantially diminishes the solution's regularity when compared to
scenarios with space-periodic boundary conditions.
Specifically, in the model problem \cref{eq:model},
the trajectories of the process \(y\) generally do not possess
\(L^2(0,T; \dot H^{3/2,2})\)-regularity, where \(\dot H^{3/2,2}\) denotes
the domain of \(A_2^{3/4}\), unless the critical structural assumptions—that the diffusion coefficients are divergence-free and vanish on the boundary \(\partial\mathcal{O}\)—are satisfied.

Here is a brief summary of some recent works on the numerical analysis of SNSEs with no-slip boundary conditions:
\begin{itemize}
  \item Breckner \cite{Breckner2000} demonstrated the convergence in mean square for a spectral-Galerkin spatial semidiscretization.
  \item Brzeźniak and Prohl \cite{Prohl2013} addressed both two- and three-dimensional cases,
showing the convergence of finite element approximations to a unique strong solution
in two dimensions using compactness arguments.
  \item Doghman \cite{Doghman2024} extended the work \cite{Prohl2013} by proving the convergence of finite element
approximations for the two-dimensional SNSEs through the artificial compressibility method.
  \item Ondreját et al.~\cite{Ondrejat2023} achieved convergence for a class of finite element approximations
to a weak martingale solution in three dimensions.
  \item To the best of our knowledge, Breit and Prohl \cite{Breit2023} were the
    first to report strong convergence rates in probability for a fully discrete scheme applied to two-dimensional SNSEs
with multiplicative noise, yielding the following convergence result:
\[
\lim_{\substack{h \to 0 \\ \tau \to 0}}
\mathbb{P}\left[
\frac{
\max_{m} \|\bm{u}(t_{m}) - \bm{u}_{h,m}\|_{L_{x}^{2}}^{2}
+ \sum_{m} \tau \|\nabla(\bm{u}(t_m) - \bm{u}_{h,m})\|_{L_{x}^2}^2
}{h^{\alpha} + \tau^{\beta}} > \xi
\right] = 0,
\]
for any $\alpha \in (1,2)$, $\beta \in (0,1)$, and $\xi > 0$.
\end{itemize}
However, the numerical analysis presented in \cite{Breit2023} is
generally not applicable to the model problem \cref{eq:model},
as it relies on the aforementioned structural assumption that the diffusion coefficients
are divergence-free and vanish on the boundary $ \partial\mathcal O $.

The model problem \cref{eq:model} and its more complex variants, where the diffusion coefficients depend on the gradient
of \( y \), have been extensively studied; see, for instance,
\cite{Brzezniak2020,Mikulevicius2004,Mikulevicius2005,Neerven2012b}.
Despite considerable research efforts, to the best of our knowledge,
rigorous numerical analyses of finite element approximations directly applicable to the model problem \cref{eq:model} are limited to those presented in the aforementioned works \cite{Prohl2013, Doghman2024}.
However, these studies have not provided explicit convergence rates in probability.
This significant gap in the existing literature motivates our current research.

The remainder of this manuscript is organized as follows.
\begin{enumerate}
  \item Section \ref{sec:preliminary} introduces the fundamental notations and presents the concept of global mild solutions.
  \item In Section \ref{sec:regularity}, we conduct a rigorous investigation into the regularity properties of the model problem \cref{eq:model}, which forms the cornerstone of our subsequent numerical analysis. Building upon the theoretical foundations established in \cite{Neerven2008,Neerven2012b}, we prove that the global mild solution of \cref{eq:model} satisfies the probabilistic estimate
    \[
      \mathbb{P}\left(
        \norm{y}_{C([0,T];\dot{H}^{\varrho,2})} \leqslant R
      \right) \geqslant 1 - \frac{c}{\ln(1+R)}, \quad \forall R > 1,
    \]
    where \(1 < \varrho < \frac{3}{2}\), \(\dot{H}^{\varrho,2}\) denotes the domain of \(A_2^{\varrho/2}\), and \(c > 0\) represents a constant independent of \(R\).
    Additionally, the regularity of the global mild solution within the general spatial $ L^q$-norms
    is also considered. The theoretical results are presented in \cref{prop:regu}.
  \item Section \ref{sec:spatial_discretization} focuses on spatial semidiscretization
    using the \(P_3/P_2\) Taylor-Hood finite element method \cite{Stenberg1990}.
    We establish a probabilistic stability estimate and derive both localized and
    global pathwise uniform convergence estimates,
    formalized in \cref{thm:yh-regu,thm:yR-yhR,thm:y-yh-global}.
  \item In Section \ref{sec:full-discretization}, we provide a rigorous analysis of the
    pathwise uniform convergence in probability for a full discretization, which
    uses the aforementioned $P_3/P_2$ Taylor-Hood finite element method and the Euler scheme.
    Our approach follows the methodology developed in \cite{Breit2021,Breit2023,Printems2001},
    utilizing suitably constructed stopping times to partition the probability
    space $\Omega$ into regular and irregular subsets.
    Within the regular subset, the numerical solution demonstrates
    robust convergence as both the spatial mesh size $h$ and the time step $\tau$
    decrease towards zero. Concurrently, the probability measure of the irregular subset
    converges to zero as $h$ and $\tau$ diminish.
  \item Finally, Section \ref{sec:conclusion} summarizes our principal theoretical
    contributions and discusses potential extensions of this work.
\end{enumerate}




\section{Mathematical Setup}
\label{sec:preliminary}
We begin by introducing some notational conventions that will be used throughout this paper.
For any Banach spaces $ E_1 $ and $ E_2 $, the space of all bounded linear operators
from $ E_1 $ to $ E_2 $ is denoted by $ \mathcal L(E_1,E_2) $.
The identity operator is denoted by $ I $.
For any $ \theta \in (0,1) $ and $ p \in (1,\infty) $, we use the notation $ (\cdot,\cdot)_{\theta,p} $
to refer to the space constructed by the real interpolation method (see, e.g., \cite[Chapter~1]{Lunardi2018}).
 Given an interval \( D \subset \mathbb{R} \) and a Banach space \( X \),
 the notation \( C(D; X) \) denotes the space of all continuous functions
 defined on \( D \) with values in \( X \).
Let $ T > 0 $ be a fixed terminal time, and let $\mathcal{O}$ be a bounded domain in $\mathbb{R}^2$ with a smooth boundary
of class $ \mathcal C^{3,1} $ (see \cite[Chapter III, Section~1.1]{Boyer2012}).
For any $ q \in [1,\infty] $, we denote by $ W^{2,q}(\mathcal O) $ and $ W^{2,q}(\mathcal O;\mathbb R^2) $
the standard Sobolev spaces (see \cite[Chapter~5]{Tartar2007}).
For $\alpha \in [0, 2]$ and $q \in (1, \infty)$, define
\[
H^{\alpha,q} := [L^q(\mathcal O), W^{2,q}(\mathcal{O})]_{\alpha/2} \quad\text{and}\quad 
\mathbb{H}^{\alpha,q} := [\mathbb{L}^q(\mathcal O;\mathbb R^2), W^{2,q}(\mathcal{O};\mathbb{R}^2)]_{\alpha/2},
\]
where $[\cdot,\cdot]_{\alpha/2} $ denotes the complex interpolation method (see \cite[Chapter~2]{Lunardi2018}).
Let $\mathcal{H}^{\alpha,q}$ denote the domain of $(-\Delta_q)^{\alpha/2}$,
equippped with the norm
\[
  \norm{u}_{\mathcal H^{\alpha,q}} := \norm{(-\Delta_q)^{\alpha/2}u}_{L^2(\mathcal O;\mathbb R^2)},
  \quad \forall u \in \mathcal H^{\alpha,q},
\]
where $\Delta_q$ is the Dirichlet Laplacian on $L^q(\mathcal O;\mathbb R^2)$.
Its dual space $\mathcal{H}^{-\alpha,q'}$ (with $q' := q/(q-1)$) is equipped
with the duality pairing $\dual{\cdot,\cdot}_{\mathcal{H}^{-\alpha,q'}, \mathcal{H}^{\alpha,q}}$.
The notation $\langle \cdot, \cdot \rangle$ is used for inner products in $L^2(\mathcal{O})$, $L^2(\mathcal{O};\mathbb{R}^2)$, or $L^2(\mathcal{O};\mathbb{R}^{2\times 2})$, depending on the context.
For brevity, we denote $L^q(\mathcal{O})$ by $L^q$. Similarly, $L^q(\mathcal{O};\mathbb{R}^2)$ and $L^q(\mathcal{O};\mathbb{R}^{2\times 2})$ are abbreviated as $\mathbb{L}^q$, as appropriate.

\subsection{Stokes Operator}
For any $q \in (1, \infty)$, we define $\mathbb{L}_{\text{div}}^q$ as the closure in $\mathbb{L}^q$ of the set:
\[
  \left\{ u \in C_{\mathrm{c}}^\infty(\mathcal{O}; \mathbb{R}^2) : \, \nabla \cdot u = 0 \text{ in } \mathcal{O} \right\},
\]
where $C_{\mathrm{c}}^\infty(\mathcal{O}; \mathbb{R}^2)$ denotes the space of vector-valued
functions that are infinitely differentiable with compact support in $\mathcal{O}$.
Let $\mathcal{P}$ denote the Helmholtz projection operator, which projects elements from $\mathbb{L}^q$ onto $\mathbb{L}_{\text{div}}^q$ for $q \in (1, \infty)$. Notably, when $q = 2$, $\mathcal{P}$ acts as the orthogonal projection in $\mathbb{L}^2$ onto $\mathbb{L}_{\text{div}}^2$.
For a detailed discussion of the Helmholtz projection operator, see Chapter III of \cite{Galdi2011}.

For any $q \in (1, \infty)$, the negative Stokes operator $A_q$ is defined as
$ A_q := -\mathcal{P} \Delta_q $, with domain
\[
  \text{D}(A_q) := \mathbb{L}_\text{div}^q \cap \{u \in W^{2,q}(\mathcal{O}; \mathbb{R}^2): \, u = 0 \text{ on } \partial\mathcal{O}\},
\]
where $\Delta_q$ is the realization of the Laplacian in $\mathbb{L}^q$ with homogeneous Dirichlet boundary conditions.
For any $\alpha \in [0, 2]$, we denote by $\dot{H}^{\alpha,q}$ the domain of $A_q^{\alpha/2}$, equipped with the norm:
\[
  \| u \|_{\dot{H}^{\alpha,q}} := \| A_q^{\alpha/2} u \|_{\mathbb{L}^q}, \quad \forall u \in \dot{H}^{\alpha,q}.
\]
By \cite[Theorem~2]{Giga1985}, $\dot{H}^{\alpha,q}$ coincides with the complex interpolation space
$[\mathbb L_{\text{div}}^q, \dot{H}^{2,q}]_{\alpha/2}$, with equivalent norms.
The dual space of $ \dot H^{\alpha,q} $ is denoted by $ \dot H^{-\alpha,q'} $,
where $q':=q/(q-1)$, and $ \dual{\cdot,\cdot}_{\dot H^{-\alpha,q'}, \dot H^{\alpha,q}} $ denotes
the corresponding duality pairing.
Additionally, the following embedding holds (see, e.g., \cite[Lemma~8.2]{Neerven2012b}):
\begin{equation}
  \label{eq:0}
\mathbb L_{\mathrm{div}}^q \cap \mathbb H^{\alpha,q} \hookrightarrow \dot H^{\alpha,q}, \quad \forall q \in (1,\infty), \, \forall \alpha \in (0,1/q).
\end{equation}
Finally, let \( S(t) \), for \( t \in [0, \infty) \), denote the analytic semigroup generated by \( -A_q \) (see \cite[Theorem~2]{Giga1981}).

\subsection{Stochastic Integrals}
Let \((\Omega, \mathcal{F}, \mathbb{P})\) denote a given complete probability space equipped with a right-continuous filtration \(\mathbb{F} := (\mathcal{F}_t)_{t \geqslant 0}\), on which a sequence of independent Brownian motions \((\beta_n)_{n \in \mathbb{N}}\) is defined. The expectation operator with respect to this probability space will be denoted by \(\mathbb E\).
Consider \( H \) as a separable Hilbert space equipped with the inner product \( (\cdot, \cdot)_H \) and an orthonormal basis \( (h_n)_{n \in \mathbb{N}} \). We define the \( \mathbb{F} \)-adapted \( H \)-cylindrical Brownian motion \( W_H \) such that, for every \( t \geqslant 0 \), \( W_H(t) \) is an element of \( \mathcal{L}(H, L^2(\Omega)) \) given by
\[
  W_H(t)h = \sum_{n \in \mathbb{N}} (h, h_n)_H \beta_n(t), \quad \forall h \in H.
\]
For any UMD Banach space \( E \), let \( \gamma(H, E) \) denote the space of all \( \gamma \)-radonifying operators from \( H \) to \( E \). For any \( p \in [1, \infty] \), we denote by \( L_\mathbb{F}^p(\Omega \times (0, T); \gamma(H, E)) \) the closure in \( L^p(\Omega \times (0, T); \gamma(H, E)) \) of the space
\[
  \text{span}\big\{
    u \mathbbm{1}_{(s,t]\times\mathcal B}: \, 0 \leqslant s < t \leqslant T, 
    \, u \in \gamma(H,E), \, \mathcal B \in \mathcal F_s
  \big\},
\]
where \( \mathbbm{1}_{(s,t]\times\mathcal B} \) is the indicator function of \( (s,t] \times \mathcal B \).
When \( E \) is a Hilbert space, \( \gamma(H,E) \) coincides with the space of
Hilbert-Schmidt operators from \( H \) to \( E \), and Itô's isometry holds:
\[
  \mathbb E \biggl\| \int_0^T g(t) \, \mathrm{d}W_H(t) \biggr\|_{E}^2
  = \mathbb E \int_0^T \| g(t) \|_{\gamma(H,E)}^2 \, \mathrm{d}t,
\]
for all \( g \in L_\mathbb{F}^2(\Omega \times (0, T); \gamma(H, E)) \).
For further details on \(\gamma\)-radonifying operators, we refer to
Chapter 9 of \cite{HytonenWeis2017}, and for stochastic integrals in UMD Banach spaces,
see \cite{Neerven2007}.

\subsection{Global Mild Solutions}
\label{ssec:mild_sol}
We begin by introducing the following hypothesis regarding the diffusion coefficients \( (f_n)_{n\in\mathbb{N}} \) in the model problem (\ref{eq:model}).
\begin{hypothesis}\label{hyp:fn}
Let \( (f_n)_{n\in\mathbb{N}} \) be a sequence of twice continuously
differentiable functions from \( \mathcal{O} \times \mathbb{R}^2 \) to \( \mathbb{R}^2 \).
There exists a constant \( C_F > 0 \) such that the following conditions hold:
\begin{small}
\begin{equation*}
  \begin{cases}
    \sup\limits_{x\in\mathcal O}{}\left(
      \displaystyle\sum_{n\in\mathbb{N}} \bigg[
        |f_n(x,y)|^2 + |\nabla_xf_n(x,y)|^2 
        + |\nabla_x^2f_n(x,y)|^2
      \bigg]
    \right)^{1/2}
    \leqslant C_F \big( 1 + |y| \big), \,\, \forall y \in \mathbb{R}^2, \\
    \sup\limits_{(x,y) \in \mathcal O\times\mathbb R^2}\left(
      \displaystyle\sum_{n\in\mathbb{N}}
      \Big[ \, |\nabla_yf_n(x,y) |^2 + |\nabla_x\nabla_yf_n(x,y)|^2
      + |\nabla_y^2f_n(x,y)|^2  \Big]
    \right)^{1/2}
    \leqslant C_F,
  \end{cases}
\end{equation*}
\end{small}
\end{hypothesis}


Throughout this paper, we consistently assume that Hypothesis \ref{hyp:fn} holds. For any \( u \in \mathbb{L}^2 \), we define \( F(u) \in \gamma(H, \mathbb{L}^2) \) as follows:
\begin{equation}
F(u) := \sum_{n\in\mathbb{N}} h_n \otimes f_n(\cdot,u(\cdot)),
\end{equation}
where the tensor product \( h_n \otimes f_n(\cdot,u(\cdot)) \in \gamma(H, \mathbb{L}^2) \) is defined by
\[
(h_n \otimes f_n(\cdot,u(\cdot)))(w) := (w, h_n)_H f_n(\cdot,u(\cdot)), \quad \forall w \in H.
\]
We present several growth and Lipschitz-continuity properties of the operator \( F \)
in the subsequent lemma. These properties can be verified through straightforward computations,
using the definition of \( \gamma \)-radonifying operators
(see Section 9.1.b in Chapter 9 of \cite{HytonenWeis2017}) and the standard theory
of interpolation spaces (see Lemma~28.1 in \cite{Tartar2007} and Theorem~4.36 in \cite{Lunardi2018}).
\begin{lemma}
  \label{lem:F}
  The following assertions hold:
  \begin{enumerate}
    \item[(i)] There exists a constant \( c > 0 \)
      such that for all \( u, v \in \mathbb{L}^2 \),
      \begin{equation*}
        \left\| F(u) - F(v) \right\|_{\gamma(H, \mathbb{L}^2)} \leqslant c \left\| u - v \right\|_{\mathbb{L}^2}.
      \end{equation*}
    \item[(ii)] There exists a constant \( c > 0 \) such that for all \( u \in \mathbb{L}^2 \),
      \begin{equation*}
        \left\| F(u) \right\|_{\gamma(H, \mathbb{L}^2)} \leqslant c (1 + \left\| u \right\|_{\mathbb{L}^2}).
      \end{equation*}
    \item[(iii)] For any \( 0 \leqslant \alpha \leqslant 1 \),
      there exists a constant \( c > 0 \) such that for all \( u \in \mathbb{H}^{\alpha,2} \), \begin{equation*}
        \left\| F(u) \right\|_{\gamma(H, \mathbb{H}^{\alpha,2})}
        \leqslant c (1 + \left\| u \right\|_{\mathbb{H}^{\alpha,2}}).
      \end{equation*}
    \item[(iv)] For any \( 1 < \varrho < \frac{3}{2} \) and \( \varrho \leqslant \alpha \leqslant 2 \),
      there exists a constant \( c > 0 \) such that for all \( u \in \mathbb{H}^{\alpha,2} \),
      \begin{equation*}
        \left\| F(u) \right\|_{\gamma(H,\mathbb H^{\alpha,2})}
        \leqslant c(1 + \left\| u \right\|_{\mathbb{H}^{\varrho,2}})^{\alpha - 1} \left\| u \right\|_{\mathbb{H}^{\alpha,2}}.
      \end{equation*}
  \end{enumerate}
\end{lemma}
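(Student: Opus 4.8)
The plan is to reduce every assertion to a Hilbert--Schmidt computation and then separate the easy ($\alpha\in\{0,1\}$) from the substantive ($\alpha>1$) estimates. Since $H$, $\mathbb L^2$, and each $\mathbb H^{\alpha,2}=[\mathbb L^2,W^{2,2}]_{\alpha/2}$ are Hilbert spaces, $\gamma(H,\mathbb H^{\alpha,2})$ coincides with the Hilbert--Schmidt class, and because $F(u)h_n=f_n(\cdot,u(\cdot))$ one has
\(
\|F(u)\|_{\gamma(H,\mathbb H^{\alpha,2})}^2=\sum_{n\in\mathbb N}\|f_n(\cdot,u(\cdot))\|_{\mathbb H^{\alpha,2}}^2 .
\)
Every claim thus becomes a bound on this series, and square-summability in $n$ is furnished termwise by Hypothesis~\ref{hyp:fn}, whose left-hand sides are exactly the $\ell^2(\mathbb N)$-norms of the relevant derivatives of $f_n$. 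For (i) and (ii) I would argue pointwise: the mean value theorem together with the second bound of Hypothesis~\ref{hyp:fn} gives $\big(\sum_n|f_n(x,u)-f_n(x,v)|^2\big)^{1/2}\leqslant C_F|u-v|$, and squaring and integrating over $\mathcal O$ yields (i); (ii) follows identically from the first bound $\big(\sum_n|f_n(x,y)|^2\big)^{1/2}\leqslant C_F(1+|y|)$.

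For (iii) I would first treat the endpoint $\alpha=1$ via the chain rule $\nabla[f_n(\cdot,u)]=(\nabla_xf_n)(\cdot,u)+(\nabla_yf_n)(\cdot,u)\nabla u$, so that $\big(\sum_n|\nabla(f_n(\cdot,u))|^2\big)^{1/2}\leqslant C_F(1+|u|)+C_F|\nabla u|$; integrating and combining with (ii) gives $\|F(u)\|_{\gamma(H,\mathbb H^{1,2})}\leqslant c(1+\|u\|_{\mathbb H^{1,2}})$. For $\alpha\in(0,1)$ I would use the Gagliardo (real-interpolation) description of $\mathbb H^{\alpha,2}$, which is exactly what Tartar's Lemma~28.1 and Lunardi's Theorem~4.36 supply. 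Splitting the double increment as $f_n(x,u(x))-f_n(y,u(y))=\big[f_n(x,u(x))-f_n(y,u(x))\big]+\big[f_n(y,u(x))-f_n(y,u(y))\big]$ and applying the termwise bounds gives $\big(\sum_n|\cdots|^2\big)^{1/2}\leqslant C_F(1+|u(x)|)|x-y|+C_F|u(x)-u(y)|$; the state increment reproduces the Gagliardo seminorm $[u]_{W^{\alpha,2}}^2$, while the spatial increment leaves the singular integral $\int_{\mathcal O}\!\int_{\mathcal O}(1+|u(x)|)^2|x-y|^{-2\alpha}\,\mathrm dx\,\mathrm dy$, which converges on the bounded domain $\mathcal O$ precisely because $2\alpha<2$, yielding (iii).

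The substance lies in (iv), a tame (Moser-type) estimate. At the top order $\alpha=2$ the chain rule produces five contributions to $\nabla^2[f_n(\cdot,u)]$; the terms carrying $\nabla_x^2f_n$, $(\nabla_x\nabla_yf_n)\nabla u$, and $(\nabla_yf_n)\nabla^2u$ are controlled directly by Hypothesis~\ref{hyp:fn}, the last one producing $\|u\|_{\mathbb H^{2,2}}$ linearly. The genuinely nonlinear term is $(\nabla_y^2f_n)(\nabla u\otimes\nabla u)$, whose Hilbert--Schmidt contribution is $\leqslant C_F^2\|\nabla u\|_{\mathbb L^4}^4$. Here I would invoke the two-dimensional Ladyzhenskaya/Gagliardo--Nirenberg inequality $\|\nabla u\|_{\mathbb L^4}^2\leqslant C\|\nabla u\|_{\mathbb L^2}\|u\|_{\mathbb H^{2,2}}$ together with the embedding $\mathbb H^{\varrho,2}\hookrightarrow L^\infty$, valid in dimension two exactly because $\varrho>1$, to bound the $(1+|u|)$ prefactors by $1+\|u\|_{\mathbb H^{\varrho,2}}$. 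This is precisely what generates the power $\alpha-1=1$ on $(1+\|u\|_{\mathbb H^{\varrho,2}})$ while keeping $\|u\|_{\mathbb H^{2,2}}$ linear.

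For the intermediate range $\varrho\leqslant\alpha<2$ I would work directly with $\|v\|_{\mathbb H^{\alpha,2}}^2\simeq\|v\|_{\mathbb L^2}^2+\|\nabla v\|_{W^{\alpha-1,2}}^2$ (with $\alpha-1\in(0,1)$) and estimate $\nabla[f_n(\cdot,u)]$ in $W^{\alpha-1,2}$ through a fractional Leibniz rule, again routing the superlinear growth through $\|u\|_{L^\infty}\leqslant C\|u\|_{\mathbb H^{\varrho,2}}$ and distributing the surplus derivatives by Gagliardo--Nirenberg interpolation; this is what upgrades the exponent to the advertised $\alpha-1$. I expect this fractional tame estimate to be the main obstacle, and the reason (iv) is less routine than its integer endpoints: one must keep the top-order derivative of $u$ appearing linearly while arranging that only the lower norm $\|u\|_{\mathbb H^{\varrho,2}}$ carries the polynomial growth of degree $\alpha-1$. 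A naive interpolation of the two endpoint inequalities is not sufficient, since the interpolation inequality $\|u\|_{\mathbb H^{\alpha,2}}\leqslant\|u\|_{\mathbb H^{\varrho,2}}^{1-\theta}\|u\|_{\mathbb H^{2,2}}^{\theta}$ runs in the wrong direction to recover $\|u\|_{\mathbb H^{\alpha,2}}$ on the right, so a direct Moser-type composition argument (and the dimension-two, $\varrho>1$ threshold it requires) appears unavoidable.
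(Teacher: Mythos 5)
The paper supplies no written proof of this lemma---only the remark preceding it that the assertions follow ``through straightforward computations'' from the definition of $\gamma$-radonifying operators (which, for Hilbert-space range, is the Hilbert--Schmidt identification you start from) and standard interpolation theory (Tartar, Lemma~28.1; Lunardi, Theorem~4.36)---and your argument is a correct working-out of exactly that sketch: termwise $\ell^2$ bounds from Hypothesis~\ref{hyp:fn} for (i)--(ii), the Gagliardo-seminorm splitting with the convergent singular integral ($2\alpha<2$) for (iii), and for (iv) the right Moser-type mechanism (Ladyzhenskaya at $\alpha=2$, a fractional Leibniz/Gagliardo--Nirenberg bookkeeping for $\varrho\leqslant\alpha<2$, with the superlinear growth routed through $\mathbb H^{\varrho,2}\hookrightarrow\mathbb L^\infty$, which is precisely where $d=2$ and $\varrho>1$ enter), together with the accurate observation that naively interpolating the endpoint bounds would leave a $\|u\|_{\mathbb H^{2,2}}$ factor and so cannot yield (iv). One caveat worth recording: since Hypothesis~\ref{hyp:fn} does not force $f_n(\cdot,0)=0$, the lower-order terms in any such computation (yours included) produce $c(1+\|u\|_{\mathbb H^{\varrho,2}})^{\alpha-1}\bigl(1+\|u\|_{\mathbb H^{\alpha,2}}\bigr)$ in (iv)---the bound as literally printed fails as $u\to 0$, since it would force $F(0)=0$---but this corrected variant is exactly what the paper uses downstream, where $F(y)$ is only ever estimated from above on events of the form $\{\|y\|_{\dot H^{\varrho,2}}\leqslant R\}$ with $R>1$.
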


Next, we introduce two convolution operations.
For any $g \in L^p(0,T;\mathbb L_{\mathrm{div}}^q) $ with $p,q \in (1,\infty) $,
the deterministic convolution is defined as
\[
  (S \ast g)(t) := \int_0^t S(t-s)g(s) \, \mathrm{d}s,
  \quad t \in [0,T].
\]
For any stochastic process $g \in L_\mathbb{F}^2(\Omega \times (0, T); \gamma(H, \mathbb{L}_{\text{div}}^q))$
with $ p,q \in [2,\infty) $, the stochastic convolution is defined as:
\[
  (S \diamond g)(t) := \int_0^t S(t-s) g(s) \, \mathrm{d}W_H(s), \quad t \in [0, T].
\]

We now proceed to introduce the concept of a global mild solution as defined
in \cite[pp.~978]{Neerven2008} and \cite[pp.~1396--1397]{Neerven2012b}.
Fix any $\varrho \in (0,\infty)$.
Let $V_\varrho$ denote the space of all $\dot H^{\varrho,2}$-valued,
$\mathbb F$-adapted processes $f: \Omega \times [0,T] \to \dot H^{\varrho,2}$ that
have almost surely continuous paths.
Two processes $ f $ and $ g $ in $ V_\varrho $ are considered identical
if $ f = g $ in $ C([0,T];\dot H^{\varrho,2}) $ holds almost surely.
For any $ p \in [1,\infty) $, let $L_{\mathcal{F}_0}^p(\Omega;\dot H^{\varrho,2})$ denote the space of all
$\dot H^{\varrho,2}$-valued random variables $v$ on the probability space
$(\Omega, \mathcal{F}, \mathbb{P})$ that are strongly $\mathcal{F}_0$-measurable and
satisfy $\mathbb{E}\|v\|_{\dot H^{\varrho,2}}^p < \infty$.
Assume that $y_0 \in L_{\mathcal F_0}^2(\Omega;\mathbb L_{\mathrm{div}}^2) \cap L_{\mathcal{F}_0}^1(\Omega;\dot H^{\varrho,2})$.
The model problem \cref{eq:model} is said to have a global mild solution $y$ in
$V_\varrho$ if, for any $R > 1$, the following equality holds in
$\dot H^{\varrho,2}$, almost surely for all $t \in [0,T]$:
\[
  \begin{aligned}
    y(t \wedge t_{R,\varrho})
    &=  S(t \wedge t_{R,\varrho})y_0 
    - \Big(S \ast  \mathcal{P}\big[(y \cdot \nabla)y \big]\Big)(t \wedge t_{R,\varrho}) \\
    & \quad {} + \Big(S \diamond \mathcal{P}\big[\mathbbm{1}_{[0, t_{R,\varrho}]} F(y)\big]\Big)(t \wedge t_{R,\varrho}),
  \end{aligned}
\]
where $\mathbbm{1}_{[0,t_{R,\varrho}]}$ denotes the indicator function of the time interval $[0,t_{R,\varrho}]$. The stopping time $t_{R,\varrho}$ is defined as
\begin{equation}
  \label{eq:tR}
  t_{R,\varrho} := \inf \left\{ t \in [0, T] : \|y(t)\|_{\dot{H}^{\varrho,2}} \geqslant R \right\},
\end{equation}
with the convention that $t_{R,\varrho} = T$ if this set is empty.

\section{Regularity}
\label{sec:regularity}
The main result of this section is the following proposition.
\begin{proposition}
  \label{prop:regu}
  Let \( y_0 \in L_{\mathcal{F}_0}^2(\Omega;\mathbb L_{\mathrm{div}}^2) \cap L_{\mathcal{F}_0}^1(\Omega; \dot{H}^{\varrho,2}) \)
  with \( \varrho \in (1,3/2) \). Then, the model problem \cref{eq:model} admits a unique global mild solution \( y \) in \( V_\varrho \) satisfying the following properties:
  \begin{enumerate}
    \item[(i)] For some \( c > 0 \) independent of \( R \),
      \begin{equation}
        \label{eq:P-tRrho}
        \mathbb{P}\big( \|y\|_{C([0,T];\dot{H}^{\varrho,2})} \leqslant R\big)
        \geqslant 1 - \frac{c}{\ln(1+R)},
        \quad \forall R > 1.
      \end{equation}
    \item[(ii)] If \( y_0 \in L_{\mathcal{F}_0}^4(\Omega;\mathbb{L}_{\mathrm{div}}^2) \), then
      \begin{equation}
        \label{eq:y-L4-C-L2}
        y \in L^4(\Omega;C([0,T];\mathbb{L}^2)).
      \end{equation}
    \item[(iii)] If \( y_0 \in L_{\mathcal{F}_0}^1(\Omega;\dot{H}^{\alpha,q}) \) with \( q \in (2,\infty) \) and \( \alpha \in (1,1+1/q) \), then for some \( c > 0 \) independent of \( R \),
      \begin{equation}
        \label{eq:P-tRrho-H1q}
        \mathbb{P}\big( \|y\|_{C([0,T];\dot{H}^{\alpha,q})} \leqslant R\big)
        \geqslant 1 - \frac{c}{\ln(1+R)}, \quad \forall R > 1.
      \end{equation}
  \end{enumerate}
\end{proposition}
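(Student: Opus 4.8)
The plan is to build the solution by a contraction argument and then to extract all three assertions from a single pathwise a priori bound of \emph{exponential} type, from which the logarithmic --- rather than polynomial --- probability rate will follow once combined with the mere $L^1$-integrability of the data. First I would solve the mild-solution identity introduced in Section~\ref{sec:preliminary}, stopped at $t_{R,\varrho}$ from \eqref{eq:tR}, by Banach's fixed point in the space of $\mathbb F$-adapted processes with continuous $\dot H^{\varrho,2}$-paths. Three estimates drive the contraction. For the convective term I would use the divergence form $(y\cdot\nabla)y=\nabla\cdot(y\otimes y)$, the two-dimensional product estimate $\|y\otimes y\|_{\mathbb H^{\varrho,2}}\lesssim\|y\|_{\mathbb H^{\varrho,2}}^2$ (available precisely because $\varrho>1$, so that $\mathbb H^{\varrho,2}$ is a multiplication algebra), and then \eqref{eq:0}: since $\varrho-1<1/2$ exactly when $\varrho<3/2$, the embedding places $\mathcal P\nabla\cdot(y\otimes y)$ in $\dot H^{\varrho-1,2}$, after which the order-one smoothing of $S\ast$ costs only a factor $(t-s)^{-1/2}$ and is time-integrable. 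For the stochastic convolution I would combine \cref{lem:F}(iii) with the stochastic maximal $L^p$-regularity of \cite{Neerven2008,Neerven2012b} to control $S\diamond\mathcal P[F(y)]$ in $\dot H^{\varrho,2}$, and for the difference of two iterates the Lipschitz bound \cref{lem:F}(i). Global existence then follows once the pathwise bound below is established, since its almost-sure finiteness forces $t_{R,\varrho}=T$ for all large $R$; uniqueness follows from the same Lipschitz estimate and Gronwall.

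For the logarithmic bound \eqref{eq:P-tRrho} I would split $y=v+z$ with $z:=S\diamond\mathcal P[F(y)]$ and treat the two parts by different mechanisms. For $z$, applying \cref{lem:F}(iii) with $\alpha=\varrho-1\,(<1/2)$ places the integrand $\mathcal P[F(y)]$ in $\dot H^{\varrho-1,2}$ (again through \eqref{eq:0}); stochastic maximal regularity then gains one spatial derivative and places $z$ in $C([0,T];\dot H^{\varrho,2})$ with $\mathbb E\,\|z\|_{C([0,T];\dot H^{\varrho,2})}<\infty$, the requisite integrability of $\|y\|_{\dot H^{\varrho-1,2}}$ being furnished by interpolating the basic $\mathbb L^2$-energy $\sup_t\|y\|_{\mathbb L^2}$ against the dissipation $\int_0^T\|y\|_{\dot H^{1,2}}^2\,\mathrm ds$. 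For $v$, which solves a two-dimensional Navier--Stokes system with forcing built from $z$ through the convective nonlinearity, I would run an enstrophy-type Gronwall argument whose integrating factor is $\exp\!\big(C\!\int_0^T\|y\|_{\dot H^{1,2}}^2\,\mathrm ds\big)$, giving
\[
  \|v\|_{C([0,T];\dot H^{\varrho,2})}\le C\exp\big(C\,Z\big),
\]
where $Z$ collects $\|y_0\|_{\dot H^{\varrho,2}}$, the $\mathbb L^2$-energy and the (finite-expectation) norms of $z$, but never $\|v\|_{\dot H^{\varrho,2}}$ itself. Taking logarithms linearises the right-hand side, so $\mathbb E[\ln(1+\|v\|_{C([0,T];\dot H^{\varrho,2})})]\le c(1+\mathbb E[Z])<\infty$ by $y_0\in L_{\mathcal F_0}^1(\Omega;\dot H^{\varrho,2})$; together with $\mathbb E[\ln(1+\|z\|_{C([0,T];\dot H^{\varrho,2})})]\le\mathbb E\,\|z\|_{C([0,T];\dot H^{\varrho,2})}<\infty$ this yields $\mathbb E[\ln(1+\|y\|_{C([0,T];\dot H^{\varrho,2})})]\le c$, whence Markov's inequality applied to $\ln(1+\cdot)$ gives \eqref{eq:P-tRrho}. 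The logarithmic rate is intrinsic: because $\int_{\mathcal O}\big((y\cdot\nabla)y\big)\cdot A_2 y\,\mathrm dx=0$ fails under no-slip conditions, there is no higher-order energy estimate and $\|v\|_{\dot H^{\varrho,2}}$ can have no finite positive moment, yet the exponential pathwise bound still furnishes a finite logarithmic one.

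For (ii) I would apply It\^o's formula to $\|y(t)\|_{\mathbb L^2}^2$, invoke the cancellation $\langle\mathcal P[(y\cdot\nabla)y],y\rangle=\langle(y\cdot\nabla)y,y\rangle=0$ (integration by parts, using $\nabla\cdot y=0$ and the zero trace), estimate the martingale term by Burkholder--Davis--Gundy together with the linear growth \cref{lem:F}(ii), and close by Gronwall to obtain $y\in L^4(\Omega;C([0,T];\mathbb L^2))$ from $y_0\in L_{\mathcal F_0}^4(\Omega;\mathbb L_{\mathrm{div}}^2)$. Part (iii) repeats the mechanism of part (i) in the $\dot H^{\alpha,q}$-scale with $q>2$, using $L^q$-maximal regularity and the $L^q$-counterparts of the product estimate and of the $\gamma$-norm bounds for $F$ (obtained by the same computations as in \cref{lem:F}); here the range $\alpha\in(1,1+1/q)$ is forced for exactly the same two reasons as before --- $\alpha>1$ for the $L^q$ product estimate, and $\alpha-1<1/q$ so that \eqref{eq:0} places $\mathcal P\nabla\cdot(y\otimes y)$ in $\dot H^{\alpha-1,q}$ and the order-one smoothing of $S\ast$ remains integrable. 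The main obstacle throughout is precisely this bilinear estimate and the closure of the Gronwall argument strictly below the critical exponent $3/2$ (resp. $1+1/q$): the no-slip condition blocks the direct higher-order energy route, so everything must be channelled through the mild formulation and sharp maximal-regularity smoothing, and it is the embedding \eqref{eq:0} that pins down the admissible regularity window.
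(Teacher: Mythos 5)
There is a genuine gap, and it sits at the heart of your argument for (i): the claimed pathwise bound $\|v\|_{C([0,T];\dot H^{\varrho,2})}\leqslant C\exp(CZ)$ obtained by an ``enstrophy-type'' Gronwall run directly at the level $\varrho\in(1,3/2)$. The paper runs the analogous Gronwall only at an auxiliary low level $\varrho_0\in(\tfrac13,\tfrac12)$, where the Giga--Miyakawa estimate supplies
$\norm{\mathcal P[(u\cdot\nabla)u]}_{\dot H^{\varrho_0-1,2}}\leqslant c_0\norm{u}_{\dot H^{\varrho_0,2}}\norm{u}_{\dot H^{1,2}}$,
i.e.\ a bound \emph{linear} in the top norm with the second factor absorbed by the dissipation; only then does the integrating factor $\exp\bigl(c\int_0^T\norm{y}_{\dot H^{1,2}}^2\,\mathrm ds\bigr)$ close the estimate. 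At your level the target space $\dot H^{\varrho-1,2}$ has \emph{positive} order, which is outside the range of that bilinear estimate ($\delta\geqslant 0$ in Giga--Miyakawa), and the available product/tame estimates give at best $\norm{(y\cdot\nabla)y}_{\mathbb H^{\varrho-1,2}}\lesssim\norm{y}_{\dot H^{\varrho,2}}^2$ (the endpoint $H^1\cdot H^{\varrho-1}\to H^{\varrho-1}$ fails in 2D). Your differential inequality is then superlinear in $\norm{v}_{\dot H^{\varrho,2}}$ and cannot be closed by a factor involving only the dissipation; and the failure of $\int_{\mathcal O}\bigl((u\cdot\nabla)u\bigr)\cdot A_2u\,\mathrm dx=0$ under no-slip conditions---which you yourself cite---removes precisely the cancellation an enstrophy argument would need. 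Acknowledging that the identity fails does not repair the estimate. Your Markov step on $\ln(1+\cdot)$ is fine as packaging (it is morally the paper's Chebyshev bound on the event that the dissipation is at most $\sqrt{\ln n}/(2c_0)$, which is where the logarithmic rate actually originates), but the exponential pathwise bound feeding it is not established.

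A second, independent manifestation of the same gap is your treatment of $z=S\diamond\mathcal P[F(y)]$: you cannot get $\mathbb E\norm{z}_{C([0,T];\dot H^{\varrho,2})}<\infty$ from interpolating energy against dissipation. Maximal regularity with integrand in $\gamma(H,\dot H^{\beta,2})$ lands in $(\dot H^{\beta,2},\dot H^{2+\beta,2})_{1/2-1/p,p}\approx\dot H^{\beta+1-2/p,2}$, so reaching $\dot H^{\varrho,2}$ forces $p\geqslant 2/(\beta-\varrho+1)$, while the interpolation $\norm{y}_{\mathbb H^{\beta,2}}\leqslant\norm{y}_{\mathbb L^2}^{1-\beta}\norm{y}_{\dot H^{1,2}}^{\beta}$ yields finite moments only for $p\leqslant 2/\beta$; these constraints are compatible precisely when $\varrho\leqslant 1$. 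For $\varrho>1$ one needs almost-sure bounds rather than moments, and that is exactly what the paper's device---absent from your proposal---provides: construct the solution at $\varrho_0\in(\tfrac13,\tfrac12)$ via the truncated nonlinearities $G_n$, obtain \cref{eq:y-varrho0}, and then bootstrap $\varrho_0\to 4/5\to\varrho$ through the \emph{stopped mild formulation}, where $\norm{y}\leqslant R^{1/3}$ almost surely on $[0,t_{R^{1/3},\cdot}]$ permits maximal regularity with arbitrarily large $p$ at the cost of only a power $R^{2/3}$. This stopping-time bootstrap, not a high-order Gronwall, is the missing idea; your part (iii) inherits the same defect (and the paper's constraint $\alpha<1+1/q$ in fact comes from $\mathcal P F(y)\in\gamma(H,\dot H^{\beta,q})$ with $\beta<1/q$ in the stochastic convolution, not from the convective product estimate). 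Your part (ii) does match the paper's Step 3 (It\^o's formula, the cancellation, Burkholder--Davis--Gundy), modulo the localization by $t_{n,\varrho}$ and the Fatou argument needed to justify it.
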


\begin{remark}
  We summarize some works on the regularity of the two-dimensional SNSEs with no-slip boundary conditions as follows.
  \begin{enumerate}
    \item For additive noise, Menaldi and Sritharan \cite{Menaldi2002} proved global existence of strong solutions with almost surely continuous paths in \( \mathbb{L}^2 \). This result was extended to multiplicative noise by Sritharan and Sundar \cite{Sritharan2006B}.
    \item Under divergence-free diffusion coefficients satisfying homogeneous boundary conditions,
      Glatt-Holtz and Ziane \cite{Glatt-Holtz2009} established global existence of strong pathwise
      solutions with almost surely continuous paths in \( \dot{H}^{1,2} \).
      Building on this, Kukavica and Vicol \cite{Kukavica2014} derived some moment bounds,
      and Breit and Prohl \cite[Section 3]{Breit2023} obtained enhanced regularity properties,
      which play a pivotal role in their numerical analysis.
Additionally, \cite[Theorem~2.4]{Chueshov2010} proved the global existence of a unique weak solution to \cref{eq:model},
 while \cite[Theorem~8.3]{Neerven2012b} established the unique existence of a maximal local mild solution to \cref{eq:model}.
  \end{enumerate}
\end{remark}

The proof of this proposition will be preceded by the introduction of several lemmas that
provide the necessary tools and estimates.
We begin by revisiting some fundamental properties of the Helmholtz projection
operator \( \mathcal{P} \).
\begin{lemma}
  \label{lem:P}
  The Helmholtz projection $\mathcal{P}$ belongs to $\mathcal L(\mathbb{H}^{\alpha,2},\mathbb H^{\alpha,2})$
  for all $\alpha \in [0,2]$, and it induces a bounded linear operator from
  $\mathcal{H}^{-1,2}$ to $\dot{H}^{-1,2}$.
  Moreover, for any $\alpha \in (0,1]$, $\mathcal{P}$ belongs to
  $\mathcal{L}(\mathbb{H}^{\alpha,2}, \dot{H}^{\beta,q})$ for every
  $0 < \beta \leqslant \alpha$ and $2 \leqslant q < \infty$, provided
  that $\frac{2}{1+\beta-\alpha} \geqslant q$ and $ \beta  < 1/q $.
\end{lemma}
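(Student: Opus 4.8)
The plan is to treat the three assertions separately; in each case I reduce matters to the boundedness of $\mathcal P$ on integer-order spaces, promote this to fractional order by complex interpolation, and use the embedding \cref{eq:0} whenever I need to land in the Stokes scale $\dot H^{\beta,q}$. The one genuinely analytic input, used repeatedly, is the regularity of the Helmholtz projection: writing the decomposition $u = \mathcal P u + \nabla\phi$, where $\phi$ solves the Neumann problem $\Delta\phi = \nabla\cdot u$ in $\mathcal O$ and $\partial_n\phi = u\cdot n$ on $\partial\mathcal O$, elliptic regularity for this problem on the $\mathcal C^{3,1}$ domain $\mathcal O$ gives $\norm{\nabla\phi}_{W^{2,q}}\lesssim\norm{u}_{W^{2,q}}$, hence $\mathcal P\in\mathcal L(W^{2,q},W^{2,q})$ for every $q\in(1,\infty)$.

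For the first assertion I record the two endpoint bounds $\mathcal P\in\mathcal L(\mathbb L^2,\mathbb L^2)$ (immediate, since for $q=2$ the operator is the orthogonal projection onto $\mathbb L^2_{\mathrm{div}}$) and $\mathcal P\in\mathcal L(W^{2,2}(\mathcal O;\mathbb R^2),W^{2,2}(\mathcal O;\mathbb R^2))$ (the Neumann-problem estimate above). Since $\mathbb H^{\alpha,2}=[\mathbb L^2,W^{2,2}]_{\alpha/2}$ by definition, the interpolation property of the complex method then yields $\mathcal P\in\mathcal L(\mathbb H^{\alpha,2},\mathbb H^{\alpha,2})$ for all $\alpha\in[0,2]$.

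For the second assertion I argue by duality. As $\mathcal P$ is the $\mathbb L^2$-orthogonal projection it is self-adjoint, and any divergence-free $v$ satisfies $\mathcal P v=v$; moreover $\dot H^{1,2}=\mathrm D(A_2^{1/2})=\mathbb L^2_{\mathrm{div}}\cap\mathcal H^{1,2}$ embeds continuously into $\mathcal H^{1,2}$, the embedding being isometric at the seminorm level through $\norm{A_2^{1/2}v}_{\mathbb L^2}^2=\dual{A_2 v,v}=\norm{\nabla v}_{\mathbb L^2}^2=\norm{(-\Delta_2)^{1/2}v}_{\mathbb L^2}^2$. Defining $\dual{\mathcal P u,v}_{\dot H^{-1,2},\dot H^{1,2}}:=\dual{u,\mathcal P v}_{\mathcal H^{-1,2},\mathcal H^{1,2}}=\dual{u,v}$ for $u\in\mathbb L^2$ and $v\in\dot H^{1,2}$, and using this embedding, gives $\norm{\mathcal P u}_{\dot H^{-1,2}}\lesssim\norm{u}_{\mathcal H^{-1,2}}$; a density argument then extends $\mathcal P$ to the claimed bounded operator $\mathcal H^{-1,2}\to\dot H^{-1,2}$.

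The third assertion is the most delicate, and I expect it to be the main obstacle, because it is where all three constraints on $(\alpha,\beta,q)$ enter and must be matched term-by-term to the correct embedding theorems. My plan is the chain
\[
  \mathbb H^{\alpha,2}\hookrightarrow\mathbb H^{\beta,q}\xrightarrow{\ \mathcal P\ }\mathbb L^q_{\mathrm{div}}\cap\mathbb H^{\beta,q}\hookrightarrow\dot H^{\beta,q}.
\]
The first inclusion is the Sobolev embedding between (vector-valued) Bessel-potential spaces on the two-dimensional domain $\mathcal O$, valid when $\alpha\geqslant\beta$, $q\geqslant 2$, and $\alpha-\beta\geqslant 1-2/q$; the last inequality is exactly the hypothesis $\frac{2}{1+\beta-\alpha}\geqslant q$ after clearing denominators (note $1+\beta-\alpha>0$ since $\alpha\leqslant 1$ and $\beta>0$). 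The middle arrow uses $\mathcal P\in\mathcal L(\mathbb H^{\beta,q},\mathbb H^{\beta,q})$, obtained exactly as in the first assertion by complex interpolation with exponent $\beta/2\in(0,1/2]$ from the endpoints $\mathcal P\in\mathcal L(\mathbb L^q,\mathbb L^q)$ (Calder\'on--Zygmund theory of $\mathcal P$ on smooth domains) and $\mathcal P\in\mathcal L(W^{2,q},W^{2,q})$. Finally, since $\mathcal P u$ is divergence-free, the embedding \cref{eq:0}—applicable precisely because the hypothesis $\beta<1/q$ is its range condition—delivers $\mathcal P u\in\dot H^{\beta,q}$ with the desired norm control. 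The essential bookkeeping is to confirm this dictionary, namely that $\frac{2}{1+\beta-\alpha}\geqslant q$ and $\beta<1/q$ correspond respectively to the Sobolev condition and to the range condition of \cref{eq:0}; once that is verified, the three displayed maps compose to give the assertion.
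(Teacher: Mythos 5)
Your proof is correct, but it is organized differently from the paper's, which is almost entirely citation-based: for the first claim the paper invokes Proposition IV.3.7 of Boyer--Fabrie (boundedness of $\mathcal P$ on $\mathbb H^{1,2}$, extended to $\alpha\in[0,2]$ by the same Neumann-problem argument you reconstruct), for the second it cites Proposition 9.14 of Kalton--Kunstmann--Weis, and for the third it composes $\mathcal P\in\mathcal L(\mathbb H^{\alpha,2},\mathbb H^{\alpha,2})$ with Sobolev embedding and \cref{eq:0}. Your treatment of the first two claims supplies the underlying arguments rather than references: endpoint boundedness on $\mathbb L^2$ and $W^{2,2}$ plus complex interpolation (legitimate, since $\mathbb H^{\alpha,2}$ is \emph{defined} as $[\mathbb L^2,W^{2,2}]_{\alpha/2}$), and a clean self-adjointness/duality argument using $\mathrm D(A_2^{1/2})=\mathbb L_{\mathrm{div}}^2\cap H_0^1$ with $\norm{A_2^{1/2}v}_{\mathbb L^2}=\norm{\nabla v}_{\mathbb L^2}$, which is essentially the content of the cited proposition. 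The genuine divergence is in the third claim, where you reverse the order of operations: you embed $\mathbb H^{\alpha,2}\hookrightarrow\mathbb H^{\beta,q}$ first and then project, which forces you to establish $\mathcal P\in\mathcal L(\mathbb H^{\beta,q},\mathbb H^{\beta,q})$ for general $q$ by interpolating between the Calder\'on--Zygmund bound on $\mathbb L^q$ and $W^{3,q}$ Neumann regularity; the paper projects first in the $L^2$ scale (where boundedness is already in hand from the first claim) and only then applies Sobolev embedding and \cref{eq:0}. Your route needs strictly more machinery—the full $L^q$-theory of the Helmholtz decomposition, though this is standard on a $\mathcal C^{3,1}$ domain—while the paper's ordering gets by with the $q=2$ theory alone, at the small cost of checking that a field in $\mathbb L_{\mathrm{div}}^2\cap\mathbb H^{\beta,q}$ lies in $\mathbb L_{\mathrm{div}}^q$, a point your ordering sidesteps since $\mathcal P$ maps $\mathbb L^q$ onto $\mathbb L_{\mathrm{div}}^q$ by definition. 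Your bookkeeping of the constraints is accurate: $q\leqslant\frac{2}{1+\beta-\alpha}$ is exactly the two-dimensional Sobolev condition $\alpha-1\geqslant\beta-2/q$, and $\beta<1/q$ is exactly the range condition of \cref{eq:0}.
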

\begin{proof}
The results are well-established in the literature.
Specifically, Proposition IV.3.7 in \cite{Boyer2012} demonstrates that
$\mathcal{P}$ is a bounded linear operator from $\mathbb{H}^{1,2}$ to $\mathbb{H}^{1,2}$.
The same argument can be extended to show that $\mathcal{P} \in \mathcal{L}(\mathbb{H}^{\alpha,2}, \mathbb{H}^{\alpha,2})$
for all $\alpha \in [0, 2]$.
The action of $\mathcal{P}$ as a bounded linear operator from
$\mathcal{H}^{-1,2}$ to $\dot{H}^{-1,2}$ is shown in Proposition 9.14 of \cite{Weis2006}.

For the final claim, let $\alpha \in (0,1]$.
Given that $\mathcal P \in \mathcal L(\mathbb H^{\alpha,2},\mathbb H^{\alpha,2})$,
Sobolev's embedding theorem, in conjunction with \cref{eq:0},
directly implies that $\mathcal P$ is also in $\mathcal L(\mathbb H^{\alpha,2},\dot H^{\beta,q})$
under the specified conditions on $\beta$ and $q$. This concludes the proof.
\end{proof}

We summarize several well-established properties of the semigroup \( S(t) \), for \( t \in [0,\infty) \),
in the following lemma. These properties can be found, for instance, in \cite{Geissert2010,Giga1981}.

\begin{lemma}
  \label{lem:S}
  The semigroup \( S(t) \), \( t \in [0, \infty) \), satisfies the following properties:  
  \begin{enumerate}
    \item[(i)] For any $ 0 \leqslant \alpha \leqslant \beta \leqslant 2 $ 
      and $ q \in (1,\infty) $, there exists $ c>0$ such that
      \begin{small}
        \[
          t \left\| \frac{\mathrm{d}}{\mathrm{d}t} S(t) \right\|_{\mathcal{L}(\dot{H}^{\alpha,q}, \dot{H}^{\beta,q})}
          + \left\| S(t) \right\|_{\mathcal{L}(\dot{H}^{\alpha,q}, \dot{H}^{\beta,q})}
          \leqslant ct^{(\alpha-\beta)/2}, \quad \forall t > 0.
        \]
      \end{small}
    \item[(ii)] For any $ g \in L^p(0,T;\mathbb L_{\mathrm{div}}^q) $ with $ p, q \in (1,\infty) $,
      there exists $ c >0 $, independent of $g $ and $ T $, such that
      \begin{small}
        \begin{align*}
          & \left\| \frac{\mathrm{d}}{\mathrm{d}t} S * g \right\|_{L^p(0,T;\mathbb L_{\mathrm{div}}^q)}
          + \left\| S * g \right\|_{L^p(0,T;\dot{H}^{2,q})} 
          + \left\| S * g \right\|_{C([0,T]; (\mathbb L_{\mathrm{div}}^q, \dot{H}^{2,q})_{1-\frac1p,p})} \\
          \leqslant{} & c \left\| g \right\|_{L^p(0,T;\mathbb L_{\mathrm{div}}^q)}.
        \end{align*}
      \end{small}
  \end{enumerate}
\end{lemma}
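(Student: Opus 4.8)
The plan is to establish the two assertions separately, relying on the analyticity and the maximal $L^p$-regularity of the Stokes operator $A_q$ recorded in \cite{Giga1981,Geissert2010}. For (i) I would begin from the fact that $-A_q$ generates a bounded analytic semigroup on $\mathbb{L}_{\mathrm{div}}^q$, so that the standard smoothing estimate $\norm{A_q^\gamma S(t)}_{\mathcal{L}(\mathbb{L}_{\mathrm{div}}^q)} \leqslant c\,t^{-\gamma}$ holds for every $\gamma \geqslant 0$ and $t>0$, and that $S(t)$ commutes with the fractional powers of $A_q$ on their natural domains. Since $\norm{\,\cdot\,}_{\dot{H}^{\alpha,q}} = \norm{A_q^{\alpha/2}\,\cdot\,}_{\mathbb{L}^q}$, for $u \in \dot{H}^{\alpha,q}$ I would factor
\[
  A_q^{\beta/2} S(t) u = A_q^{(\beta-\alpha)/2} S(t)\, A_q^{\alpha/2} u,
\]
and, using $\beta - \alpha \geqslant 0$ together with the smoothing estimate, bound its $\mathbb{L}^q$-norm by $c\,t^{(\alpha-\beta)/2}\norm{u}_{\dot{H}^{\alpha,q}}$. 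The term involving the time derivative is handled identically after substituting $\frac{\mathrm{d}}{\mathrm{d}t}S(t) = -A_q S(t)$, which raises the fractional power by one and thereby produces an extra factor $t^{-1}$; multiplying by $t$ restores the claimed order. This part is essentially a bookkeeping exercise once analyticity and the identification $\dot{H}^{\alpha,q} = \mathrm{D}(A_q^{\alpha/2})$ are in hand.

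For (ii) the core is the maximal $L^p$-regularity of $A_q$: because $\mathbb{L}_{\mathrm{div}}^q$ is a UMD space and $A_q$ is $R$-sectorial (equivalently, admits a bounded $H^\infty$-calculus) on the bounded $\mathcal{C}^{3,1}$ domain $\mathcal{O}$, the function $u := S \ast g$, which solves $u' + A_q u = g$ with $u(0)=0$, satisfies $\norm{A_q u}_{L^p(0,T;\mathbb{L}_{\mathrm{div}}^q)} \leqslant c\norm{g}_{L^p(0,T;\mathbb{L}_{\mathrm{div}}^q)}$; this is precisely the $\dot{H}^{2,q}$ estimate. The derivative estimate then follows by writing $\frac{\mathrm{d}}{\mathrm{d}t}(S\ast g) = g - A_q(S\ast g)$ and applying the triangle inequality. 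For the third term I would invoke the trace (mixed-derivative) embedding of the maximal-regularity class into $C([0,T];(\mathbb{L}_{\mathrm{div}}^q,\dot{H}^{2,q})_{1-1/p,p})$, which is the canonical description of the temporal trace space associated with $A_q$.

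The main obstacle is the $T$-independence of the constant, since the naive maximal-regularity and trace constants on $(0,T)$ may degenerate as $T$ grows. I would circumvent this by exploiting the exponential stability of the Stokes semigroup---on the bounded domain $\mathcal{O}$ the operator $A_q$ is boundedly invertible, with spectrum bounded away from the imaginary axis---so that, after extending $g$ by zero to $(0,\infty)$, the half-line maximal-regularity and trace estimates apply with constants that are intrinsically independent of $T$; restricting back to $(0,T)$ then yields the stated uniform bound. The genuinely deep ingredient, namely the $R$-sectoriality and bounded $H^\infty$-calculus of the Stokes operator, is taken from \cite{Giga1981,Geissert2010}; the remaining effort consists in assembling these facts and verifying carefully that every constant can be chosen free of $T$.
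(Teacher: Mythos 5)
Your proposal is correct and takes essentially the same route as the paper, which offers no proof of this lemma beyond citing \cite{Geissert2010,Giga1981}: your part (i) is the standard analytic-semigroup argument (commutation with fractional powers, the moment estimate $\norm{A_q^{\gamma}S(t)}_{\mathcal{L}(\mathbb{L}_{\mathrm{div}}^q)} \leqslant ct^{-\gamma}$, and $\dot{H}^{\alpha,q}=\mathrm{D}(A_q^{\alpha/2})$ via \cite{Giga1985}), and your part (ii) reconstructs the maximal $L^p$-regularity and trace-space embedding underlying the cited references. Your handling of the $T$-independence of the constant—zero extension to the half-line combined with the exponential stability of the Stokes semigroup on the bounded domain $\mathcal{O}$—is exactly the right device, and is in fact the only point the paper leaves implicit.
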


We are now prepared to prove \cref{prop:regu}.

\medskip\noindent\textbf{Proof of \cref{prop:regu}.}
The proof proceeds in four steps.
Throughout, \(c\) denotes a generic positive constant,
independent of the positive integer \(n\) (introduced in Step 1) and the parameter \(R\),
whose value may change at each occurrence.
For brevity, we will
use the ideal property of \(\gamma\)-radonifying operators
(\cite[Theorem~9.1.10]{HytonenWeis2017}) and
standard properties of real and complex interpolation
spaces (\cite[Propositions 1.3, 1.4, Theorem~4.36]{Lunardi2018},
\cite[Section 11.6]{Yagi2010}) without explicit reference.
For \( \theta \in [0,2] \), \( A_2 \) extends
to a bounded linear operator from \( \dot H^{\theta,2} \) to \( \dot H^{\theta-2,2} \)
via
\[
  \dual{A_2u,v}_{\dot H^{\theta-2},\dot H^{2-\theta,2}}
  := \dual{A_2^{\theta/2}u, A_2^{1-\theta/2}v},
  \quad \forall u \in \dot H^{\theta,2}, \, \forall v \in \dot H^{2-\theta,2}.
\]
These extensions will be used as needed.
The applicability of \cite[Theorem 3.5]{Neerven2012b}
follows from \cite[Theorem~2.5]{Neerven2012b} and
\cite[Theorem 9.17]{Weis2006}.

\textbf{Step 1.} 
Fix \(\varrho_0 \in (\frac13,\frac{1}{2})\).
Let us use the framework from \cite{Neerven2008} to establish the unique existence of
a global mild solution to \cref{eq:model} in \(V_{\varrho_0}\).
For each integer \(n \geqslant 2\), consider the auxiliary problem:
\[
\begin{cases}
\mathrm{d}y^{(n)} = -\left[ A_2y^{(n)}(t) + G_n(y^{(n)}(t)) \right] \mathrm{d}t + \mathcal{P}F(y^{(n)}(t)) \mathrm{d}W_H(t), & t \in [0, T], \\
y^{(n)}(0) = y_0\mathbbm{1}_{\{\norm{y_0}_{\dot H^{\varrho,2}} \leqslant \frac{n}{2}\}},
\end{cases}
\]
where \(\mathbbm{1}_{\{\norm{y_0}_{\dot H^{\varrho,2}} \leqslant \frac{n}{2}\}}\) is the indicator function of the set \(\{\norm{y_0}_{\dot H^{\varrho,2}} \leqslant \frac{n}{2}\}\), and \(G_n\) is defined for \(u \in \dot H^{1,2}\) by:
\[
G_n(u) :=
\begin{cases}
\mathcal{P}\left[ (u \cdot \nabla) u \right], & \| u \|_{\dot{H}^{\varrho_0,2}} \leqslant n, \\
\frac{n^2}{\| u \|_{\dot H^{\varrho_0,2}}^2} \mathcal{P}\left[ (u \cdot \nabla) u \right], & \| u \|_{\dot{H}^{\varrho_0,2}} > n.
\end{cases}
\]
By Lemma~2.2 in \cite{Giga1985B}, \(G_n\) extends to \(\dot H^{\varrho_0,2}\),
and for \(u, v \in \dot H^{\varrho_0,2}\),
\[
\norm{G_n(u)}_{\dot H^{2\varrho_0-2,2}} \leqslant cn\norm{u}_{\dot H^{\varrho_0,2}}, \quad \norm{G_n(u) - G_n(v)}_{\dot H^{2\varrho_0-2,2}} \leqslant cn \norm{u-v}_{\dot H^{\varrho_0,2}}.
\]
From \cref{lem:F} and \cref{lem:P}, for \(u, v \in \dot H^{\varrho_0,2}\),
\[
\norm{\mathcal{P}F(u)}_{\gamma(H,\dot H^{0,2})} \leqslant c(1 + \norm{u}_{\dot H^{\varrho_0,2}}), \quad \norm{\mathcal{P}F(u) - \mathcal{P}F(v)}_{\gamma(H,\dot H^{0,2})} \leqslant c\norm{u-v}_{\dot H^{\varrho_0,2}}.
\]
Let \( p \in (\frac{2}{\varrho_0}, \infty) \) be chosen such that \( (\dot H^{2\varrho_0-2,2}, \dot H^{2\varrho_0,2})_{1-1/p,p} \) and \( (\dot H^{0,2}, \dot H^{2,2})_{1/2-1/p,p} \) are continuously embedded into \( \dot H^{\varrho_0,2} \). By \cref{lem:S} and \cite[Theorem~3.5]{Neerven2012b}, a standard fixed-point argument (see, e.g., \cite[Theorem~6.2]{Neerven2008}) guarantees the existence and uniqueness of a process \( y^{(n)} \in L^p(\Omega;C([0,T];\dot H^{\varrho_0,2})) \) satisfying, almost surely,
\begin{small}
\begin{equation}
  \label{eq:yn-mild}
y^{(n)}(t) = S(t) y_0\mathbbm{1}_{\{\norm{y_0}_{\dot H^{\varrho,2}} \leqslant \frac{n}{2}\}} - \left[ S \ast G_n(y^{(n)}) \right](t) + \left[ S \diamond \mathcal{P}F(y^{(n)}) \right](t) \quad \text{in } \dot H^{\varrho_0,2}
\end{equation}
\end{small}
\hskip -.3em for all \( t \in [0,T] \).
By \cref{lem:S} and \cite[Theorem~3.5]{Neerven2012b}, we deduce that
\begin{align*}
& S(\cdot) y_0\mathbbm{1}_{\{\norm{y_0}_{\dot H^{\varrho,2}} \leqslant \frac{n}{2}\}} + S \diamond \mathcal{P}F(y^{(n)}) \in L_{\mathbb F}^2(\Omega\times(0,T);\dot H^{1,2}), \\
& S \ast G_n(y^{(n)}) \in L_{\mathbb{F}}^2(\Omega \times (0, T); \dot{H}^{2\varrho_0,2}).
\end{align*}
Consequently, \( y^{(n)} \in L_{\mathbb{F}}^2(\Omega \times (0, T); \dot{H}^{2\varrho_0,2}) \), which, by \cite[Lemma~2.2]{Giga1985B}, implies \( G_n(y^{(n)}) \in L_{\mathbb F}^2(\Omega\times(0,T);\dot H^{3\varrho_0-2,2}) \).
Thus, \cref{lem:S}(ii) yields
\[
S \ast G_n(y^{(n)}) \in L_{\mathbb{F}}^2(\Omega \times (0, T); \dot{H}^{3\varrho_0,2}).
\]
Combining these results and noting that \( 3\varrho_0 > 1 \), we conclude
\[
y^{(n)} \in L_{\mathbb{F}}^2(\Omega \times (0, T); \dot{H}^{1, 2}).
\]
Given this regularity and the fact that \( \dual{G_n(u), u} = 0 \) for all \( u \in \dot{H}^{1,2} \), and \( y_0\mathbbm{1}_{\{\norm{y_0}_{\dot{H}^{\varrho,2}} \leqslant \frac{n}{2}\}} \) is uniformly bounded in \( L^2(\Omega; \mathbb L_{\mathrm{div}}^2) \) with respect to \( n \), Itô's formula applied to \( \norm{y^{(n)}}_{\mathbb{L}^2}^2 \) yields
\begin{equation}
  \label{eq:yn-regu}
  \| y^{(n)} \|_{L^2(\Omega \times (0, T); \dot{H}^{1,2})} \quad \text{is uniformly bounded in } n.
\end{equation}
Combined with \cref{lem:F}(iii) and \cref{lem:P}, this implies  
\[
\| \mathcal{P}F(y^{(n)}) \|_{L^2(\Omega \times (0, T); \gamma(H,\dot{H}^{\varrho_0,2}))} \quad \text{is uniformly bounded in } n,
\]
which, by \cite[Theorem 3.5]{Neerven2012b}, implies
\[
\norm{S \diamond \mathcal{P}F(y^{(n)})}_{L^2(\Omega; C([0, T]; \dot{H}^{\varrho_0,2}))} \quad \text{is uniformly bounded in } n.
\]
Using \cref{lem:S}(i) and \(y_0 \in L_{\mathcal{F}_0}^1(\Omega; \dot{H}^{\varrho,2})\), we obtain  
\begin{equation}
\label{eq:zn-regu}
\norm{z^{(n)}}_{L^1(\Omega; C([0, T]; \dot{H}^{\varrho_0,2}))} \quad \text{is uniformly bounded in } n,
\end{equation}
where \(z^{(n)} := S(\cdot)y_0 \mathbbm{1}_{\{\norm{y_0}_{\dot{H}^{\varrho,2}} \leqslant \frac{n}{2}\}} + S \diamond \mathcal{P}F(y^{(n)})\).
Let \(\eta^{(n)} := y^{(n)} - z^{(n)}\). From \cref{eq:yn-mild}, it follows almost surely that
\begin{equation}
\label{eq:etan}
\frac{\mathrm{d}}{\mathrm{d}t}\eta^{(n)}(t) = -A_2\eta^{(n)}(t) - G_n(y^{(n)}(t)) \quad \text{in } \dot{H}^{2\varrho_0-2,2}, \quad \forall t \in [0, T].
\end{equation}
Given \cref{eq:yn-regu}, \cite[Lemma~2.2]{Giga1985B} implies
\( G_n(y^{(n)}) \in L^2(\Omega\times(0,T);\dot H^{\varrho_0-1,2}) \),
leading to \( \eta^{(n)} \in L^2(\Omega\times(0,T);\dot H^{\varrho_0+1,2}) \) by \cref{lem:S}(ii).
This regularity result allows us to deduce from \cref{eq:etan} that, almost surely, for all \(t \in [0, T]\),  
\begin{small}
\begin{align*}
\norm{\eta^{(n)}(t)}_{\dot{H}^{\varrho_0,2}}^2 
&= -2 \int_0^t \dual{A_2\eta^{(n)}(s) + G_n(y^{(n)}(s)), A_2^{\varrho_0} \eta^{(n)}(s)}_{\dot{H}^{\varrho_0-1,2}, \dot{H}^{1-\varrho_0,2}} \, \mathrm{d}s \\
&\leqslant \int_0^t \left( -2\norm{\eta^{(n)}(s)}_{\dot{H}^{1+\varrho_0,2}}^2 + 2 \norm{G_n(y^{(n)}(s))}_{\dot{H}^{\varrho_0-1,2}} \norm{\eta^{(n)}(s)}_{\dot{H}^{1+\varrho_0,2}} \right) \, \mathrm{d}s \\
&\leqslant \int_0^t \norm{G_n(y^{(n)}(s))}_{\dot{H}^{\varrho_0-1,2}}^2 \, \mathrm{d}s.
\end{align*}
\end{small}
\hskip -.3em By \cite[Lemma~2.2]{Giga1985B}, there exists a constant \(c_0 > 0\), independent of \(n\),
such that  $ \norm{G_n(u)}_{\dot{H}^{\varrho_0-1,2}} \leqslant
c_0 \norm{u}_{\dot{H}^{\varrho_0,2}} \norm{u}_{\dot{H}^{1,2}} $
for all $ u \in \dot{H}^{1,2} $. Hence, almost surely, for all \(t \in [0, T]\),  
\[
\norm{\eta^{(n)}(t)}_{\dot{H}^{\varrho_0,2}}^2 \leqslant 2c_0^2 \int_0^t \left( \norm{\eta^{(n)}(s)}_{\dot{H}^{\varrho_0,2}}^2 + \norm{z^{(n)}(s)}_{\dot{H}^{\varrho_0,2}}^2 \right) \norm{y^{(n)}(s)}_{\dot{H}^{1,2}}^2 \, \mathrm{d}s.
\]
Applying Gronwall's inequality, we obtain the following bound, holding almost surely:  
\begin{small}
\[
\norm{\eta^{(n)}}_{C([0, T]; \dot{H}^{\varrho_0,2})}^2 \leqslant 2c_0^2 \exp\left(2c_0^2 \int_0^T \norm{y^{(n)}(s)}_{\dot{H}^{1,2}}^2 \, \mathrm{d}s \right) \int_0^T \norm{y^{(n)}(s)}_{\dot{H}^{1,2}}^2 \norm{z^{(n)}(s)}_{\dot{H}^{\varrho_0,2}}^2 \, \mathrm{d}s.
\]
\end{small}
Consequently, almost surely on the set \(\Omega_n\), defined as  
\begin{small}
\[
\Omega_n := \left\{ \omega \in \Omega : \norm{y^{(n)}}_{L^2(0, T; \dot{H}^{1,2})} \leqslant \min\left\{\frac{n^{1/4}}{2c_0}, \frac{\sqrt{\ln n}}{2c_0}\right\}, \, \norm{z^{(n)}}_{C([0, T]; \dot{H}^{\varrho_0,2})} \leqslant \sqrt{\frac{n}{2}} \right\},
\]
\end{small}
\hskip -.3em we have  
\[
\norm{\eta^{(n)}}_{C([0, T]; \dot{H}^{\varrho_0,2})} \leqslant \frac{n}{2}.
\]
From the decomposition \( y^{(n)} = \eta^{(n)} + z^{(n)} \), it follows that
\begin{equation}
  \label{eq:yn-bound}
  \|y^{(n)}\|_{C([0,T];\dot H^{\varrho_0,2})}
  \leqslant n, \quad\text{almost surely on } \Omega_n.
\end{equation}
In view of \cref{eq:yn-regu,eq:zn-regu}, we deduce that
\begin{align}
  \mathbb P(\Omega_{n}) \geqslant
  1 - \frac{c}{
    \ln n
  } \quad\text{for all } n \geqslant 2.
  \label{eq:omega_n-prob}
\end{align}
For each $ n \geqslant 2 $, define the stopping time
$
  \sigma_n := \inf\{t \in [0,T]: \|y^{(n)}(t)\|_{\dot H^{\varrho_0,2}} \geqslant n\}
  $,
with the convention that $\sigma_n = T$ if the set is empty.
It holds almost surely that $ \sigma_n $ is non-decreasing with respect to $ n $,
and 
$ y^{(m)} = y^{(n)} $ in $ C([0,\sigma_m];\dot H^{\varrho_0,2}) $, for all $ n > m \geqslant 2 $,
as established in \cite[Lemma~8.2]{Neerven2008}.
Consequently, we define the process $y$ such that, almost surely,
$ y := y^{(n)} $ on $ \Omega \times [0,\sigma_n] $
for each $ n \geqslant 2 $.
By \cref{eq:yn-bound,eq:omega_n-prob}, we have
$ \lim_{n \to \infty} \mathbb P(\sigma_n = T) = 1 $,
indicating that $y$ is well-defined on $[0,T]$ almost surely
with continuous paths in $ \dot H^{\varrho_0,2} $.
It follows that $ y $ is a global mild solution of
the model problem \cref{eq:model} in $V_{\varrho_0}$.
Following the argument in \cite[Lemma~8.2]{Neerven2008},
the uniqueness of this global mild solution in \(V_{\varrho_0}\) is confirmed. 
Moreover, \cref{eq:omega_n-prob} implies that
\begin{equation}
  \label{eq:y-varrho0}
  \mathbb P\Big(\norm{y}_{C([0,T];\dot H^{\varrho_0,2})} \leqslant R \Big)
  \geqslant 1 - \frac{c}{\ln(1+R)}
  \quad\text{for all $ R > 1 $}.
\end{equation}



\textbf{Step 2.}
Fix any $ \varrho_0 \in (\max\{\varrho-1,\frac25\},\frac12) $.
Let us proceed to demonstrate that the global mild solution \( y \) of \cref{eq:model}
in \( V_{\varrho_0} \), as constructed in Step 1, is indeed the unique global mild solution of
\cref{eq:model} in \( V_\varrho \). For arbitrary \( R > 1 \),
we decompose \( y(\cdot \wedge t_{R^{1/3},\varrho_0}) \) almost surely as
follows:
\[
y(t \wedge t_{R^{1/3},\varrho_0}) = I_1(t) + I_2(t) + I_3(t)
\quad\text{for all $ t \in [0,T] $},
\]
where $ t_{R^{1/3},\varrho_0} $ is defined by \cref{eq:tR}, and, for any \( t \in [0,T] \), 
\[
\begin{aligned}
I_1(t) &:= S(t \wedge t_{R^{1/3},\varrho_0}) y_0, \\
I_2(t) &:= -\left[ S \ast \mathcal{P}\left(\mathbbm{1}_{[0,t_{R^{1/3},\varrho_0}]} (y \cdot \nabla)y\right) \right](t \wedge t_{R^{1/3},\varrho_0}), \\
I_3(t) &:= \left[S \diamond \mathcal{P}\left(\mathbbm{1}_{[0,t_{R^{1/3},\varrho_0}]} F(y)\right)\right](t \wedge t_{R^{1/3},\varrho_0}).
\end{aligned}
\]
For \( I_1 \), using \cref{lem:S}(i) and \( y_0 \in L_{\mathcal F_0}^1(\Omega;\dot H^{\varrho,2}) \),
we obtain that
\begin{equation}
\label{eq:I1}
\norm{I_1}_{L^1(\Omega;C([0,T];\dot{H}^{\varrho,2}))}
\quad\text{is uniformly bounded in $ R$.}
\end{equation}
For \( I_2 \), by \cite[Lemma~2.2]{Giga1985B}, 
and the definition of \( t_{R^{1/3},\varrho_0} \), it follows that
\[
  \left\| \mathbbm{1}_{[0,t_{R^{1/3},\varrho_0}]} \mathcal P\big[(y \cdot \nabla)y\big] \right\|_{L^{\infty}(\Omega\times(0,T);\dot H^{2\varrho_0-2})} \leqslant c R^{2/3}.
\]
Using \cref{lem:S}(ii) and the embedding
\( (\dot H^{2\varrho_0-2,2}, \dot H^{2\varrho_0,2})_{1-1/p,p} \hookrightarrow \dot H^{4/5,2} \) for
large \( p \) (since $ \varrho_0 > 2/5$), we infer that
\begin{equation}
  \label{eq:I2}
\left\| I_2 \right\|_{L^{1}(\Omega;C([0,T];\dot H^{4/5,2}))} \leqslant c R^{2/3}.
\end{equation}
For \( I_3 \), applying \cref{lem:F}(iii) and \cref{lem:P}, we get
\[
\left\| \mathbbm{1}_{[0,t_{R^{1/3},\varrho_0}]}\mathcal{P}F(y) \right\|_{L_\mathbb{F}^\infty(\Omega\times(0,T); \gamma(H, \dot{H}^{\varrho_0,2}))} \leqslant c R^{1/3}.
\]
Hence, by Theorem 3.5 in \cite{Neerven2012b} and
the embedding \( (\dot{H}^{\varrho_0,2}, \dot{H}^{2+\varrho_0,2})_{1/2-1/p, p}
\hookrightarrow \dot{H}^{\varrho, 2} \) for large \( p \) (since \( \varrho_0 > \varrho-1\)), we conclude
\begin{equation}
\label{eq:I3}
\left\| I_3 \right\|_{L^1(\Omega;C([0,T]; \dot{H}^{\varrho, 2}))} \leqslant c R^{1/3}.
\end{equation}
Combining \cref{eq:I1,eq:I2,eq:I3}, we obtain  
\begin{equation*}  
\lVert y(\cdot \wedge t_{R^{1/3},\varrho_0}) \rVert_{L^1(\Omega; C([0,T]; \dot{H}^{4/5,2}))} \leqslant c R^{2/3}.  
\end{equation*}  
This inequality, together with \eqref{eq:y-varrho0}, yields  
\[  
\mathbb P\big( \norm{y}_{C([0,T];\dot H^{4/5,2})} \leqslant R\big)  
\geqslant 1 - \frac{c}{\ln(1+R)}.  
\]  
Based on this inequality and the estimate from \cite[Lemma~2.2]{Giga1985B},  
\[  
\norm{\mathcal P\big[(u\cdot\nabla) u\big]}_{\dot H^{-2/5,2}}  
\leqslant c \norm{u}_{\dot H^{4/5,2}}^2,  
\quad \forall u \in \dot H^{1,2},  
\]  
we reiterate the previous argument, replacing \( t_{R^{1/3},\varrho_0} \) with  
\( t_{R^{1/3},4/5} \), to establish the desired probability inequality \eqref{eq:P-tRrho}.  
Furthermore, \eqref{eq:P-tRrho} confirms that \( y \) is a global mild solution  
of the model problem \eqref{eq:model} in \( V_\varrho \).  
Uniqueness follows since the global mild solution in \( V_{\varrho} \)  
is also a global mild solution in \( V_{\varrho_0} \), which is unique by Step 1.

\textbf{Step 3.}
We now prove \cref{eq:y-L4-C-L2} under the additional assumption \( y_0 \in L_{\mathcal{F}_0}^4(\Omega;\mathbb{L}_{\mathrm{div}}^2) \). For each \( n \in \mathbb{N}_{>0} \), we have
\begin{small}
\[
\mathrm{d}y(t\wedge t_{n,\varrho}) = \mathbbm{1}_{[0,t_{n,\varrho}]}(t)\left\{\big(-A_2y(t) - \mathcal{P}[(y(t)\cdot\nabla)y(t)] \big) \mathrm{d}t + \mathcal{P}F(y(t)) \mathrm{d}W_H(t) \right\}, \quad t \in [0,T],
\]
\end{small}
\hskip -.3em where \( t_{n,\varrho} \) is defined in \cref{eq:tR}. 
By \cref{lem:F}(ii) and the identity \( \dual{\mathcal{P}[(u\cdot\nabla)u],u} = 0 \)
for \( u \in \dot{H}^{\varrho,2} \),
an application of Itô's formula and the Burkholder-Davis-Gundy inequality shows that
$
\norm{y(\cdot\wedge t_{n,\varrho})}_{L^4(\Omega;C([0,T];\mathbb{L}^2))}
$ is uniformly bounded in \( n \).
The monotone convergence theorem and Fatou's lemma then imply
\[
\mathbb{E}\left[\norm{y}_{C([0,T];\mathbb{L}^2)}^4\right] \leqslant \liminf_{n \to \infty} \mathbb{E}\left[\norm{y(\cdot\wedge t_{n,\varrho})}_{C([0,T];\mathbb{L}^2)}^4\right] < \infty,
\]
establishing the regularity claim in \cref{eq:y-L4-C-L2}.

\textbf{Step 4.}
We now establish \cref{eq:P-tRrho-H1q} under the additional condition
\( y_0 \in L_{\mathcal{F}_0}^1(\Omega;\dot{H}^{\alpha,q}) \)
with \( q \in (2,\infty) \) and \( \alpha \in (1,1+1/q) \).
Fix \( R > 1 \), and let \( t_{R^{1/3},\varrho} \) be defined as in \cref{eq:tR}.
Given that \( y_0 \in L_{\mathcal{F}_0}^1(\Omega; \dot{H}^{\alpha,q}) \),
Lemma \ref{lem:S}(i) implies  
\begin{equation}  
  \label{eq:9}  
  S(\cdot)y_0 \in L^1(\Omega; C([0,T]; \dot{H}^{\alpha,q})).  
\end{equation}  
By Hölder's inequality and the embedding
$ \dot H^{\varrho,2}\hookrightarrow\mathbb L^\infty$,
we have
\[
  \norm{\mathcal P\big[(y \cdot \nabla)y\big]}_{L^\infty(0,T; \dot H^{0,2}))}
  \leqslant cR^{2/3},
\]
almost surely on the event \( \{t_{R^{1/3},\varrho}=T\} \).
Consequently, using \cref{lem:S}(ii) with
the embedding $ (\dot H^{0,2},\dot H^{2,2})_{1-1/p,p} \hookrightarrow \dot H^{\alpha,q} $
for sufficiently large $p \in (2,\infty)$, we obtain  
\begin{align}  
  \norm{S \ast \big[\mathcal{P}\big[(y \cdot \nabla)y\big]\big]}_{  
    L^1(\{t_{R^{1/3},\varrho}=T\}; C([0,T]; \dot{H}^{\alpha,q}))  
  }  
  \leqslant cR^{2/3}.  
  \label{eq:10}  
\end{align}  
From \cref{lem:F}(iii) we have
$
  \norm{\mathbbm{1}_{[0,t_{R^{1/3},\varrho}]}F(y)}_{\mathbb L_{\mathbb F}^\infty(\Omega\times(0,T);\gamma(H,\mathbb H^{1,2}))}
  \leqslant cR^{1/3}
  $,
which implies, by \cref{lem:P}, for any $ 0 < \beta < 1/q $.
\begin{align*}
  \norm{\mathbbm{1}_{[0,t_{R^{1/3},\varrho}]}\mathcal PF(y)}_{\mathbb L_{\mathbb F}^\infty(\Omega\times(0,T);\gamma(H,\dot H^{\beta,q}))}
  \leqslant cR^{1/3}.
\end{align*}
Using \cite[Theorem~3.5]{Neerven2012b} and
the embedding $ (\dot H^{\beta,q},\dot H^{2+\beta,q})_{1/2-1/p,p}
\hookrightarrow \dot H^{\alpha,q} $
for sufficiently large $ \beta \in (0,1/q) $ and $ p \in (2,\infty) $,
we get
\begin{align}  
  \left\lVert S \diamond \left(\mathbbm{1}_{[0,t_{R^{1/3},\varrho}]} \mathcal{P}F(y)\right)\right\rVert_{  
    L^1(\Omega; C([0,T]; \dot{H}^{\alpha,q}))  
  } \leqslant cR^{1/3}.  
  \label{eq:11}  
\end{align}  
From the identity  
\[  
  y = S(\cdot)y_0 - S \ast \mathcal{P}\big[(y \cdot \nabla)y\big]  
  + S \diamond \big[\mathbbm{1}_{[0,t_{R^{1/3},\varrho}]} \mathcal{P}F(y)\big],  
\]  
valid almost surely on \( \{t_{R^{1/3},\varrho}=T\} \) in \( C([0,T];\dot H^{\varrho,2}) \),
and by combining \cref{eq:9,eq:10,eq:11}, we derive  
\[  
  \norm{y}_{L^1(\{t_{R^{1/3},\varrho}=T\}; C([0,T]; \dot{H}^{\alpha,q}))}  
  \leqslant cR^{2/3}.  
\]  
This, together with \cref{eq:P-tRrho}, establishes \cref{eq:P-tRrho-H1q}, thereby completing the proof of \cref{prop:regu}.

\hfill$\blacksquare$

\section{Spatial Semidiscretization}
\label{sec:spatial_discretization}
Let \( \mathcal{K}_h \) be a conforming and quasi-uniform triangulation of
the domain \( \mathcal{O} \), with each $ K \in \mathcal K_h $ containing
at least one interior vertex to satisfy the conditions in \cite{Brezzi1991}
and \cite{Girault2003}.
Let \( h \) represents the maximum diameter of
the elements in \( \mathcal{K}_h \). The closure of the union of all elements
in \( \mathcal{K}_h \) is denoted by \( \mathcal{O}_h \). Let \( \mathbb{L}_h \) and
\( M_h \) be constructed on \( \mathcal{O}_h \) using the well-known
\( P_3 / P_2 \) Taylor-Hood finite element method \cite{Stenberg1990}.
Specifically,
\begin{align*}
  \mathbb{L}_h &:= \big\{u_h \in C(\overline{\mathcal{O}_h}; \mathbb{R}^2): u_h|_K \in P_3(K; \mathbb{R}^2) \text{ for each } K \in \mathcal{K}_h \text{ and } u_h = 0 \text{ on } \partial\mathcal{O}_h \big\}, \\
  M_h &:= \big\{\phi_h \in C(\overline{\mathcal{O}_h}; \mathbb{R}):
  \phi_h|_K \in P_2(K) \text{ for each } K \in \mathcal{K}_h \big\},
  \end{align*}
where \( P_3(K; \mathbb{R}^2) \) denotes the space of all vector-valued polynomials
of degree at most 3 on \( K \), and \( P_2(K) \) denotes the space of all polynomials
of degree at most 2 on \( K \). Functions in \( \mathbb{L}_h \) and \( M_h \) are extended by zero
to \( \mathcal{O} \setminus \mathcal{O}_h \).
We define \( \mathbb{L}_{h,\text{div}} \) as the subset of \( \mathbb{L}_h \) consisting of all \( u_h \in \mathbb{L}_h \) such that
\[
\langle \nabla \cdot u_h, \phi_h \rangle = 0, \quad \forall \phi_h \in M_h.
\]
Let \( \mathcal{P}_h \) denote the \( L^2(\mathcal{O}; \mathbb{R}^2) \)-orthogonal projection operator onto \( \mathbb{L}_{h,\text{div}} \). The discrete negative Stokes operator \( A_h: \mathbb{L}_{h,\text{div}} \to \mathbb{L}_{h,\text{div}} \) is defined by
\[
  \dual{A_h u_h, v_h} = \dual{\nabla u_h, \nabla v_h}, \quad \forall u_h, v_h \in \mathbb{L}_{h,\text{div}}.
\]
Here, we recall that $ \dual{\cdot,\cdot} $ denotes the inner product in
\(L^2(\mathcal O)\), \( L^2(\mathcal{O}; \mathbb{R}^2) \), or \( L^2(\mathcal{O}; \mathbb{R}^{2 \times 2}) \),
as appropriate for the context.
It is standard that, for any $ \theta \in [0,1] $, there
exists $ c >0 $, independent of $h $, such that
\begin{equation}
  \label{eq:inverse}
  \norm{A_h^\theta}_{\mathcal L(\mathbb L^2,\mathbb L^2)}
  \leqslant ch^{-2\theta}.
\end{equation}
For any \( \theta \in \mathbb{R} \), we define the space \( \dot{H}_h^{\theta,2} \) as \( \mathbb{L}_{h,\text{div}} \) equipped with the norm
\[
  \| u_h \|_{\dot{H}_h^{\theta,2}} := \| A_h^{\theta/2} u_h \|_{L^2(\mathcal{O}_h)}, \quad \forall u_h \in \mathbb{L}_{h,\text{div}}.
\]
By Theorem~16.1 in \cite{Yagi2010}, the complex interpolation
theory \cite[Theorem~2.6]{Lunardi2018} applies to the spaces \( \dot{H}_h^{\theta,2} \),
\( \theta \in \mathbb{R} \), with constants independent of \( h \).
As an application, for any \( \theta \in [0,1] \), there exists \( c > 0 \), independent of \( h \), such that
\begin{equation}
  \label{eq:1}
  \| u_h \|_{\mathbb{H}^{\theta,2}} \leqslant c \| u_h \|_{\dot{H}_h^{\theta,2}}, \quad \forall u_h \in \mathbb{L}_{h,\text{div}}.
\end{equation}


We study the following spatial semidiscretization:
\begin{equation}
  \label{eq:yh}
  \begin{cases}
    \mathrm{d}y_h(t) = -\big[
      A_h y_h(t) + \mathcal{P}_h G(y_h(t))
    \big] \mathrm{d}t
    + \mathcal{P}_h F(y_h(t)) \mathrm{d}W_H(t), \quad t \in [0, T], \\
    y_h(0) = \mathcal{P}_h y_0,
  \end{cases}
\end{equation}
where the operator \( G \) is defined as
\begin{equation}
  \label{eq:G-def}
  G(u) := (u \cdot \nabla) u + \frac{1}{2} (\nabla \cdot u) u,
  \quad \forall u \in \mathbb H^{1,2}.
\end{equation}
By the classical theory of finite-dimensional stochastic differential equations (see, e.g., Theorem~3.27 of \cite{Pardoux2014}),
for any initial condition $ y_0 \in L_{\mathcal F_0}^2(\Omega;\mathbb L_{\mathrm{div}}^2) $,
\eqref{eq:yh} admits a unique strong solution.

The main results of this section are summarized in the following theorems.

\begin{theorem}
  \label{thm:yh-regu}
  Suppose \( y_0 \in L_{\mathcal{F}_0}^4(\Omega;\mathbb{L}_{\mathrm{div}}^2) \cap L_{\mathcal{F}_0}^1(\Omega; \dot{H}^{\varrho,2}) \) with \( \varrho \in (1, \frac{3}{2}) \). Let \( y_h \) be the strong solution to the spatial semidiscretization \eqref{eq:yh}. There exists a constant \( c > 0 \), independent of \( h \) and \( R_h \), such that for all \( 0 < h < 1 \) and \( 1 < R_h < \infty \),
  \begin{equation}
    \label{eq:yh-regu}
    \mathbb{P}\left(\norm{y_h}_{C([0,T];\dot{H}_h^{\varrho,2})} \leqslant R_h\right)
    \geqslant 1 - \frac{c}{\ln(1+R_h)}.
  \end{equation}
\end{theorem}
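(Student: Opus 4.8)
The plan is to establish \eqref{eq:yh-regu} by transcribing the two–stage argument behind \cref{prop:regu}(i) into the finite–element setting, the decisive point being that every constant produced must be independent of $h$. Since \eqref{eq:yh} is a finite–dimensional SDE, existence is not an issue and the whole effort concentrates on the uniform probability bound. First I would record the uniform energy estimate: applying It\^o's formula to $\norm{y_h}_{\mathbb L^2}^2$ and exploiting the discrete cancellation $\dual{\mathcal P_h G(y_h),y_h} = \dual{G(y_h),y_h} = 0$ — which holds precisely because the symmetrizing term $\tfrac12(\nabla\cdot u)u$ in \eqref{eq:G-def} compensates the nonvanishing discrete divergence of $y_h \in \mathbb L_{h,\mathrm{div}}$ — together with $\dual{A_h y_h,y_h} = \norm{y_h}_{\dot H_h^{1,2}}^2$, \cref{lem:F}(ii) and the Burkholder–Davis–Gundy inequality, yields bounds on $\mathbb E\norm{y_h}_{C([0,T];\mathbb L^2)}^4$ and on $\mathbb E\int_0^T\norm{y_h}_{\dot H_h^{1,2}}^2\,\mathrm dt$ that are uniform in $h$.

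Next I would prove the estimate at a low regularity exponent $\varrho_0\in(\tfrac13,\tfrac12)$, imitating Step~1 of the proof of \cref{prop:regu}. Writing $S_h(t):=e^{-tA_h}$ for the discrete analytic semigroup generated by $-A_h$, I decompose $y_h = z_h + \eta_h$, where $z_h := S_h(\cdot)\mathcal P_h y_0 + \big(S_h\diamond \mathcal P_h F(y_h)\big)$ collects the linear and stochastic parts and $\eta_h$ solves the pathwise ODE $\tfrac{\mathrm d}{\mathrm dt}\eta_h = -A_h\eta_h - \mathcal P_h G(y_h)$. A discrete stochastic–convolution maximal regularity estimate (uniform in $h$), combined with \cref{lem:F}(iii), \cref{lem:P} and \eqref{eq:1}, bounds $\norm{z_h}_{L^1(\Omega;C([0,T];\dot H_h^{\varrho_0,2}))}$ uniformly; testing the $\eta_h$–equation against $A_h^{\varrho_0}\eta_h$, invoking a discrete product estimate of the type $\norm{\mathcal P_h G(u_h)}_{\dot H_h^{\varrho_0-1,2}}\leqslant c\norm{u_h}_{\dot H_h^{\varrho_0,2}}\norm{u_h}_{\dot H_h^{1,2}}$ and applying Gronwall then controls $\norm{\eta_h}_{C([0,T];\dot H_h^{\varrho_0,2})}$ by an exponential of $\int_0^T\norm{y_h}_{\dot H_h^{1,2}}^2\,\mathrm dt$ times $\int_0^T\norm{y_h}_{\dot H_h^{1,2}}^2\norm{z_h}_{\dot H_h^{\varrho_0,2}}^2\,\mathrm dt$. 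Defining the good event on which $\norm{y_h}_{L^2(0,T;\dot H_h^{1,2})}\lesssim\sqrt{\ln R_h}$ and $\norm{z_h}_{C([0,T];\dot H_h^{\varrho_0,2})}\lesssim\sqrt{R_h}$, one gets $\norm{y_h}_{C([0,T];\dot H_h^{\varrho_0,2})}\leqslant R_h$ there, and Chebyshev's inequality — with the logarithmic energy threshold producing exactly the $1/\ln(1+R_h)$ decay — yields $\mathbb P(\norm{y_h}_{C([0,T];\dot H_h^{\varrho_0,2})}\leqslant R_h)\geqslant 1 - c/\ln(1+R_h)$.

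To raise the exponent from $\varrho_0$ to the target $\varrho\in(1,\tfrac32)$ I would bootstrap as in Step~2, introducing the discrete stopping time $\sigma:=\inf\{t\in[0,T]:\norm{y_h(t)}_{\dot H_h^{\varrho_0,2}}\geqslant R_h^{1/3}\}$ and the localized mild identity $y_h(\cdot\wedge\sigma) = S_h(\cdot\wedge\sigma)\mathcal P_h y_0 - \big(S_h\ast \mathcal P_h G(y_h)\big)(\cdot\wedge\sigma) + \big(S_h\diamond \mathbbm{1}_{[0,\sigma]}\mathcal P_h F(y_h)\big)(\cdot\wedge\sigma)$. On $\{\sigma=T\}$ the nonlinearity is controlled in a negative discrete norm by $cR_h^{2/3}$, so the deterministic convolution gains regularity through the discrete smoothing and maximal–regularity estimates for $S_h$, while the stochastic convolution gains regularity through discrete stochastic maximal regularity; iterating through an intermediate exponent (e.g.\ $4/5$, exactly as in the continuous proof) reaches $\dot H_h^{\varrho,2}$ with an $L^1(\{\sigma=T\};C([0,T];\dot H_h^{\varrho,2}))$ bound of order $R_h^{2/3}$. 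Combining this moment bound with the low–regularity probability bound from the previous step upgrades the conclusion to the exponent $\varrho$, giving \eqref{eq:yh-regu}.

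The hard part is establishing the discrete counterparts of \cref{lem:S} and of the stochastic maximal regularity theorem \cite[Theorem~3.5]{Neerven2012b} with constants independent of $h$, along with the discrete nonlinear product estimates in the negative discrete scales $\dot H_h^{s,2}$, $s<1$. The smoothing bounds $\norm{A_h^{\beta}S_h(t)}_{\mathcal L(\mathbb L^2,\mathbb L^2)}\leqslant c\,t^{-\beta}$ follow from the spectral calculus of the self–adjoint positive operator $A_h$ and are automatically $h$–uniform, and the uniform complex interpolation of the scale $\dot H_h^{\theta,2}$ is already available from \eqref{eq:1} and the cited interpolation results; but transferring the deterministic $L^p$–maximal regularity and, above all, the $\gamma$–radonifying stochastic maximal regularity to the discrete scale uniformly in $h$, and estimating $\mathcal P_h G$ between discrete fractional spaces without picking up inverse powers of $h$, is where the essential work lies.
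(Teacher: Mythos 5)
Your overall architecture—Itô energy bounds with the cancellation $\dual{G(u_h),u_h}=0$, the $z_h/\eta_h$ splitting with $h$-uniform discrete stochastic maximal regularity, Gronwall on a logarithmically thresholded good event, and a stopping-time bootstrap—is exactly the paper's, and the deferred semigroup items are unproblematic (the paper, too, gets \cref{lem:Sh} from the spectral calculus of $A_h$ and citations). The genuine gap is at the start of your bootstrap. Having proved the probability bound only at $\varrho_0\in(\tfrac13,\tfrac12)$, your first bootstrap round must control $\mathbbm{1}_{[0,\sigma]}\mathcal P_hG(y_h)$ using nothing but the stopping-time bound in $\dot H_h^{\varrho_0,2}$; that is, it needs a discrete Giga--Miyakawa estimate of the form $\norm{\mathcal P_hG(u_h)}_{\dot H_h^{2\varrho_0-2,2}}\leqslant c\norm{u_h}_{\dot H_h^{\varrho_0,2}}^2$ at an exponent \emph{below} $1/2$. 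The paper proves such an estimate (\cref{eq:PhG}) only for $\alpha\in(\tfrac12,1)$, and that restriction is not cosmetic: in handling the correction terms $[(u_h-\mathcal Pu_h)\cdot\nabla]u_h$ and $(\nabla\cdot u_h)u_h$ (see \cref{eq:lxy-2,eq:lxy-3}), the embedding $\mathbb H^{2-2\alpha,2}\hookrightarrow\mathbb L^{2/(2\alpha-1)}$ and the balancing of $\norm{u_h-\mathcal Pu_h}_{\mathbb L^2}\leqslant ch^{\alpha}\norm{u_h}_{\dot H_h^{\alpha,2}}$ against inverse estimates require $\alpha>1/2$; for $\alpha<1/2$ the same computation produces a factor $h^{2\alpha-1}\to\infty$. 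This is precisely the ``inverse powers of $h$'' danger you flag in your closing paragraph, but your plan does not circumvent it—your choice of starting exponent runs straight into it. (The energy-based alternative $\norm{\mathcal P_hG(u_h)}_{\dot H_h^{\varrho_0-1,2}}\leqslant c\norm{u_h}_{\dot H_h^{\varrho_0,2}}\norm{u_h}_{\dot H_h^{1,2}}$ does not rescue the round either: the $\dot H_h^{1,2}$-factor is only $L^2$ in time, and deterministic maximal regularity with $p=2$ yields no gain over $\varrho_0$.)

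The paper avoids this by engineering its first stage to land \emph{above} $1/2$, which is exactly where your conservative imitation of the continuous Step 1 gives away available headroom. From the Itô bounds (fourth moments in $\mathbb L^2$, using $y_0\in L_{\mathcal F_0}^4(\Omega;\mathbb L_{\mathrm{div}}^2)$, plus the $L^2(\Omega\times(0,T);\dot H_h^{1,2})$ bound) the paper interpolates to $y_h\in L^{8/3}(\Omega\times(0,T);\dot H_h^{1/2,2})$, applies \cref{lem:Ph}(vi) with $\alpha$ near $1/2$ and \cref{lem:Sh}(iii) together with the $h$-uniform embedding $(\dot H_h^{\alpha,2},\dot H_h^{2+\alpha,2})_{1/8,8/3}\hookrightarrow\dot H_h^{0.7,2}$ to place the stochastic convolution in $C([0,T];\dot H_h^{0.7,2})$, and thereby gets the probability bound \cref{eq:yh-regu-0} at exponent $0.7$; the subsequent bootstrap then only ever invokes \cref{eq:PhG} with $\alpha\in(\tfrac12,1)$, where the correction terms are controllable. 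Your proposal already contains every ingredient for this (the $L^4$ energy bound, $\mathcal P_h$-boundedness up to $1/2^-$, discrete stochastic maximal regularity), so the repair is to raise the stage-one landing exponent past $1/2$ rather than to start at $\varrho_0\in(\tfrac13,\tfrac12)$; as written, however, the first bootstrap round rests on a bilinear estimate that fails by these methods, and the key $h$-uniform estimate \cref{eq:PhG}—the actual heart of the paper's proof—remains unestablished in your plan.
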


\begin{theorem}
  \label{thm:yR-yhR}
  Assume \( y_0 \in L_{\mathcal F_0}^p(\Omega;\dot{H}^{\varrho,2}) \)
  with $ p \in [4,\infty) $ and \( \varrho \in (1, \frac{3}{2}) \).
  Let \( y \) be the global mild solution in \( V_\varrho \) to \eqref{eq:model}, and let \( y_h \) be the strong solution of \eqref{eq:yh}. Define \( t_{R,\varrho} \) as in \eqref{eq:tR}.
  Then, there exists a constant \( c > 0 \), independent of \( h \) and \( R \),
  such that for all \( 0 < h < 1/2 \) and \( R > 1 \),
  \begin{equation}
    \label{eq:yR-yhR}
    \|y - y_h\|_{L^p(\{t_{R,\varrho}=T\}; C([0,T]; \mathbb{L}^2))}
    \leqslant ch^{\varrho}\ln\frac{1}{h}\exp(cR^2).
  \end{equation}
\end{theorem}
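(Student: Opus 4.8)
The plan is to split the error through the Stokes--Ritz projection and estimate the two pieces separately. Let $R_h\colon \dot H^{1,2}\to\mathbb{L}_{h,\mathrm{div}}$ be defined by $\dual{\nabla(R_hu-u),\nabla v_h}=0$ for all $v_h\in\mathbb{L}_{h,\mathrm{div}}$, so that the consistency of the discrete Stokes operator is built in (morally $A_hR_hu=\mathcal P_hA_2u$) and the standard approximation bound $\norm{u-R_hu}_{\mathbb L^2}\leqslant ch^{\varrho}\norm{u}_{\dot H^{\varrho,2}}$ holds. Writing $\rho:=y-R_hy$ and $\theta_h:=R_hy-y_h$, the embedding $\dot H^{\varrho,2}\hookrightarrow\mathbb L^\infty$ (valid since $\varrho>1$) together with $\norm{y}_{C([0,T];\dot H^{\varrho,2})}\leqslant R$ on $\{t_{R,\varrho}=T\}$ gives $\norm{\rho}_{C([0,T];\mathbb L^2)}\leqslant ch^{\varrho}R$ on that event, so it remains to control $\theta_h$. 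First I would pass to the stopped solution $y(\cdot\wedge t_{R,\varrho})$ (which equals $y$ on the event), subtract \eqref{eq:yh} from the weak form of \eqref{eq:model} tested against $\mathbb{L}_{h,\mathrm{div}}$ to obtain the Itô equation for $\theta_h$ in the finite-dimensional space $\mathbb{L}_{h,\mathrm{div}}$ (the pressure-type terms arising because $\mathbb{L}_{h,\mathrm{div}}$ is only discretely divergence-free are of higher order and treated by routine finite-element consistency arguments), and then apply Itô's formula to $\norm{\theta_h}_{\mathbb L^2}^2$.

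In the energy identity the linear term contributes exactly $-2\int_0^t\norm{\nabla\theta_h}_{\mathbb L^2}^2$, since $\dual{\nabla\rho,\nabla\theta_h}=0$ by the definition of $R_h$, while the convective contribution is $-2\int_0^t\big(b(y,y,\theta_h)-b(y_h,y_h,\theta_h)\big)$, where $b(u,v,w):=\dual{(u\cdot\nabla)v,w}+\tfrac12\dual{(\nabla\cdot u)v,w}$ satisfies $b(u,v,v)=0$ and, for divergence-free $y$, $b(y,y,\theta_h)=\dual{(y\cdot\nabla)y,\theta_h}$. Inserting $y-y_h=\rho+\theta_h$ and discarding $b(y_h,\theta_h,\theta_h)=0$ leaves the three terms $b(\rho,y,\theta_h)$, $b(\theta_h,y,\theta_h)$, $b(y_h,\rho,\theta_h)$. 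The middle one I would bound by Ladyzhenskaya's inequality $\norm{\theta_h}_{\mathbb L^4}^2\leqslant c\norm{\theta_h}_{\mathbb L^2}\norm{\nabla\theta_h}_{\mathbb L^2}$ together with $\norm{\nabla y}_{\mathbb L^2}\leqslant cR$ and $\norm{y}_{\mathbb L^\infty}\leqslant cR$, yielding $\tfrac18\norm{\nabla\theta_h}_{\mathbb L^2}^2+cR^2\norm{\theta_h}_{\mathbb L^2}^2$; after Gronwall the accumulated factor $\exp\!\big(c\int_0^T\norm{\nabla y}^2\big)\leqslant\exp(cR^2)$ is precisely the origin of the $\exp(cR^2)$ in \eqref{eq:yR-yhR}.

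The main obstacle is the two remaining convective terms at the borderline regularity $\varrho<\tfrac32$: in two dimensions $\dot H^{\varrho,2}$ just fails to embed into $W^{1,4}$, so the naive estimates do not close. I would resolve this with the two-dimensional logarithmic Sobolev (Brezis--Gallouet) inequality combined with the inverse estimate \eqref{eq:inverse}, which for finite-element functions gives $\norm{v_h}_{\mathbb L^\infty}\leqslant c\sqrt{\ln(1/h)}\,\norm{v_h}_{\dot H_h^{1,2}}$. Pairing this with $\norm{\rho}_{\mathbb L^2}\leqslant ch^{\varrho}R$, $\norm{\nabla y}_{\mathbb L^2}\leqslant cR$, and $\norm{y_h}_{\mathbb L^\infty}\leqslant cR+c\sqrt{\ln(1/h)}\norm{\theta_h}_{\dot H_h^{1,2}}$ bounds both $b(\rho,y,\theta_h)$ and $b(y_h,\rho,\theta_h)$ by $\tfrac18\norm{\nabla\theta_h}^2$ plus a forcing of size $c\,h^{2\varrho}R^4\ln(1/h)$ and a term $c\norm{\theta_h}^2$; the genuinely cubic part, carrying coefficient $c\,h^{\varrho}R\sqrt{\ln(1/h)}$, is absorbed into the dissipation once $h$ is small (in the complementary regime \eqref{eq:yR-yhR} is trivial from the a priori bounds of \cref{prop:regu}(ii) and \cref{thm:yh-regu}). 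This borderline step is exactly where the factor $\ln\tfrac1h$ enters the estimate.

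It remains to handle the stochastic terms and close in the pathwise-uniform norm. The Itô correction equals $\int_0^t\norm{\mathcal P_h(F(y)-F(y_h))}_{\gamma(H,\mathbb L^2)}^2$, which by the Lipschitz bound \cref{lem:F}(i) and the $\mathbb L^2$-contractivity of $\mathcal P_h$ is at most $c\,h^{2\varrho}R^2+c\int_0^t\norm{\theta_h}^2$. For the $L^p(\Omega)$ bound I would take the supremum in $t$ and expectation restricted to $\{t_{R,\varrho}=T\}$, estimate the martingale $2\int_0^t\dual{\theta_h,\mathcal P_h(F(y)-F(y_h))\,\mathrm dW_H}$ by the Burkholder--Davis--Gundy inequality, and absorb the resulting $\norm{\theta_h}_{C([0,T];\mathbb L^2)}$ factor by Young's inequality. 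Together with $\norm{\theta_h(0)}_{\mathbb L^2}=\norm{R_hy_0-\mathcal P_hy_0}_{\mathbb L^2}\leqslant ch^{\varrho}\norm{y_0}_{\dot H^{\varrho,2}}$, a stochastic Gronwall argument then gives $\norm{\theta_h}_{L^p(\{t_{R,\varrho}=T\};C([0,T];\mathbb L^2))}\leqslant c\,h^{\varrho}\ln\tfrac1h\exp(cR^2)$, and combining with the bound for $\rho$ via the triangle inequality yields \eqref{eq:yR-yhR}.
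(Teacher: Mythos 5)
Your overall architecture (a projection splitting, the antisymmetrized trilinear form $b$, an energy/It\^o argument closed by Burkholder--Davis--Gundy and Gronwall, with $\exp(cR^2)$ produced by the Gronwall factor) parallels the paper's, but there is a genuine gap at the very first substantive step: deriving ``the It\^o equation for $\theta_h$''. Subtracting \eqref{eq:yh} from the weak form of \eqref{eq:model} yields an equation for $\dual{y-y_h,v_h}=\dual{\theta_h,v_h}+\dual{\rho,v_h}$, so to isolate $\theta_h$ you must control $\mathrm{d}\dual{\rho,v_h}=\dual{(I-R_h)\,\mathrm{d}y,\,v_h}$. Its drift part involves $(I-R_h)A_2y$ (equivalently, $R_h$ applied to $A_2y$), which is only meaningful if $y(t)\in\dot H^{2,2}$, or at least $y\in L^2(0,T;\dot H^{s,2})$ with $s\geqslant 3/2$ --- exactly the regularity the paper rules out for the model \cref{eq:model}, since the $f_n$ are neither divergence-free nor boundary-vanishing, so trajectories in general fail to lie in $L^2(0,T;\dot H^{3/2,2})$. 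Worse, the martingale part requires $(I-R_h)\mathcal{P}F(y)$, but the Helmholtz projection destroys the zero trace: $\mathcal{P}F(y)$ does not vanish on $\partial\mathcal{O}$, hence is not in $\dot H^{1,2}$ and lies outside the domain of your Ritz projection altogether. These are precisely not ``routine finite-element consistency arguments''; they are the central obstruction this paper is built to overcome. The paper's remedy is to replace $R_h$ by $\mathscr{P}_h:=\mathcal{P}_h(-\Delta_2)A_2^{-1}$, which extends to negative-order spaces and enjoys the weak consistency bounds of \cref{lem:scrPh} requiring only $\mathbb{H}^{\varrho,2}$ (interior Sobolev) rather than $\dot H^{\varrho,2}$ regularity of the data, and to pre-absorb the elliptic consistency error into the auxiliary process $\xi_h=S_h\ast(\mathscr{P}_hA_2y-A_h\mathscr{P}_hy)$; the energy/It\^o argument is then run on $\eta_h=y_h-\mathscr{P}_hy-\xi_h$, whose SDE contains no time derivative of a projection error. (Incidentally, your $R_h$ coincides with $A_h^{-1}\mathscr{P}_hA_2$, which is why the paper's $\xi_h$ is exactly the correction your decomposition is missing.)

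Relatedly, your attribution of the $\ln\frac1h$ factor to a Brezis--Gallou\"et argument does not match its actual origin here: in the paper's proof of \cref{thm:yR-yhR} no logarithmic Sobolev inequality is used (the discrete $\sqrt{\ln(1/h)}$ bound enters only later, in the proof of \cref{thm:yhR-YR} for the full discretization). The logarithm comes from the deterministic, Thom\'ee-type nonsmooth-data estimate in \cref{lem:y-phy-xih}, namely $\norm{S_h\ast(\mathscr{P}_hA_2g-A_h\mathscr{P}_hg)}_{C([0,T];\mathbb{L}^2)}\leqslant ch^\varrho\bigl(\ln\frac1h\bigr)\norm{g}_{L^\infty(0,T;\dot H^{\varrho,2})}$, obtained by splitting the convolution at $t\wedge h^2$ --- an estimate available because $y$ is only bounded in $C([0,T];\dot H^{\varrho,2})$ on the event $\{t_{R,\varrho}=T\}$. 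Your remaining ingredients (Ladyzhenskaya for $b(\theta_h,y,\theta_h)$, $\mathbb{L}^\infty$-stability of the discrete velocity, disposing of the regime where $h^{\varrho}R\sqrt{\ln(1/h)}$ is not small via a priori bounds) are serviceable and have counterparts in the paper's Steps 2--6, but without a substitute for the $\mathscr{P}_h$/$\xi_h$ device the proof does not close at the stated regularity.
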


\begin{theorem}
  \label{thm:y-yh-global}
  Under the assumptions of Theorem \ref{thm:yR-yhR},
  there exists \( h_0 \in (0, \exp(-3)) \) such that for all \( 0 < h < h_0 \),
  \begin{equation}
    \label{eq:y-yh-global}
    \norm{y - y_h}_{L^2(\Omega; C([0,T]; \mathbb{L}^2))}
    \leqslant \frac{c}{\Big(\ln\big(\ln\frac{1}{h}\big)\Big)^{1/4}},
  \end{equation}
  where \( c > 0 \) is a constant independent of \( h \).
\end{theorem}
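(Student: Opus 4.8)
The plan is to combine the localized convergence estimate of \cref{thm:yR-yhR} with the tail bound for the solution norm in \cref{prop:regu}(i) through a stopping-time decomposition of $\Omega$, and then to optimize over the threshold $R=R(h)$. Throughout I take $p=4$, which is admissible because the hypothesis $y_0\in L_{\mathcal F_0}^p(\Omega;\dot H^{\varrho,2})$ with $p\geqslant 4$ gives $y_0\in L_{\mathcal F_0}^4(\Omega;\dot H^{\varrho,2})$, hence (via $\dot H^{\varrho,2}\hookrightarrow\mathbb L^2$) $y_0\in L_{\mathcal F_0}^4(\Omega;\mathbb L_{\mathrm{div}}^2)$. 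The first ingredient is the pair of global moment bounds
\[
\norm{y}_{L^4(\Omega;C([0,T];\mathbb L^2))}+\sup_{0<h<1}\norm{y_h}_{L^4(\Omega;C([0,T];\mathbb L^2))}=:M<\infty.
\]
The bound on $y$ is exactly \cref{prop:regu}(ii). The uniform-in-$h$ bound on $y_h$ I would obtain by applying It\^o's formula to $\norm{y_h}_{\mathbb L^2}^2$ and exploiting the structural cancellation $\dual{\mathcal P_hG(y_h),y_h}=\dual{G(y_h),y_h}=0$ — which is precisely why the convective term is symmetrized as in \cref{eq:G-def} — together with $\norm{\mathcal P_hF(y_h)}_{\gamma(H,\mathbb L^2)}\leqslant\norm{F(y_h)}_{\gamma(H,\mathbb L^2)}\leqslant c(1+\norm{y_h}_{\mathbb L^2})$ from \cref{lem:F}(ii), the Burkholder--Davis--Gundy inequality, and Gronwall's lemma.

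Next I decompose $\Omega=\{t_{R,\varrho}=T\}\cup\{t_{R,\varrho}<T\}$ with $t_{R,\varrho}$ as in \cref{eq:tR}, and abbreviate $Z:=\norm{y-y_h}_{C([0,T];\mathbb L^2)}$. On the regular set, using $\mathbb P(\{t_{R,\varrho}=T\})\leqslant 1$ to pass from the $L^2$- to the $L^4$-norm, \cref{thm:yR-yhR} gives
\begin{align*}
\mathbb E\big[\mathbbm 1_{\{t_{R,\varrho}=T\}}Z^2\big]
&\leqslant \norm{y-y_h}_{L^4(\{t_{R,\varrho}=T\};C([0,T];\mathbb L^2))}^2 \\
&\leqslant c\,h^{2\varrho}\big(\ln\tfrac1h\big)^2\exp(cR^2),
\end{align*}
where $c$ is generic (the exponent constant having absorbed the factor $2$ from squaring). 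On the irregular set, H\"older's inequality with exponents $(2,2)$, the moment bound $\norm{y-y_h}_{L^4(\Omega;C([0,T];\mathbb L^2))}\leqslant 2M$, and the tail estimate $\mathbb P(t_{R,\varrho}<T)\leqslant\mathbb P(\norm{y}_{C([0,T];\dot H^{\varrho,2})}\geqslant R)\leqslant c/\ln(1+R)$ from \cref{prop:regu}(i) yield
\begin{align*}
\mathbb E\big[\mathbbm 1_{\{t_{R,\varrho}<T\}}Z^2\big]
&\leqslant \mathbb P(t_{R,\varrho}<T)^{1/2}\,\norm{y-y_h}_{L^4(\Omega;C([0,T];\mathbb L^2))}^2 \\
&\leqslant \frac{c}{\big(\ln(1+R)\big)^{1/2}}.
\end{align*}

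Finally I optimize over $R$. Taking $R=\kappa\sqrt{\ln(1/h)}$ with $\kappa>0$ small enough that $h^{2\varrho}\exp(cR^2)=h^{2\varrho-c\kappa^2}$ with $c\kappa^2<2\varrho$, the regular contribution is bounded by $c\,h^{2\varrho-c\kappa^2}(\ln\tfrac1h)^2$, which decays polynomially in $h$; meanwhile $R\asymp\sqrt{\ln(1/h)}$ forces $\ln(1+R)\asymp\tfrac12\ln\ln(1/h)$, so the irregular contribution is at most $c(\ln\ln(1/h))^{-1/2}$ and dominates for small $h$. Adding the two bounds for $\mathbb E[Z^2]=\norm{y-y_h}_{L^2(\Omega;C([0,T];\mathbb L^2))}^2$ and taking square roots gives $\norm{y-y_h}_{L^2(\Omega;C([0,T];\mathbb L^2))}\leqslant c(\ln\ln(1/h))^{-1/4}$, as claimed; the restriction $h<h_0<\exp(-3)$ guarantees $R>1$ and $\ln\ln(1/h)>0$, so every estimate and asymptotic comparison is valid.

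The main obstacle is the balancing in the last step. Because the localized error of \cref{thm:yR-yhR} grows like $\exp(cR^2)$, the threshold $R$ cannot exceed $O(\sqrt{\ln(1/h)})$ without destroying the spatial gain $h^{2\varrho}$; this cap limits the tail probability $c/\ln(1+R)$ to a decay of only $c/\ln\ln(1/h)$, which is the genuine source of the double-logarithmic rate. Moreover, the precise exponent $1/4$ is tied to $p=4$ through the H\"older exponent $1-2/p=\tfrac12$, so one must check that the minimal admissible integrability already produces exactly the stated rate (larger $p$ would only improve it toward the limiting exponent $1/2$).
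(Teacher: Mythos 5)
Your proposal is correct and follows essentially the same route as the paper's proof: a uniform-in-$h$ $L^4(\Omega;C([0,T];\mathbb{L}^2))$ bound on $y_h$ via It\^o's formula and the cancellation $\dual{G(u_h),u_h}=0$, a decomposition of $\Omega$ along the stopping time $t_{R,\varrho}$ combining \cref{eq:yR-yhR} on the regular set with \cref{eq:P-tRrho} and H\"older on its complement, and the choice $R\asymp\sqrt{\ln(1/h)}$. The only differences are cosmetic: you work with $\mathbb{E}[Z^2]$ and H\"older exponents $(2,2)$ where the paper splits at norm level with a $\mathbb{P}(t_{R,\varrho}<T)^{1/4}$ factor, and you take a generic $R=\kappa\sqrt{\ln(1/h)}$ with $c\kappa^2<2\varrho$ where the paper specifies $R$ explicitly — both yield the stated $(\ln\ln(1/h))^{-1/4}$ rate.
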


To prove these theorems, we introduce several auxiliary lemmas. For convenience, throughout this section,
\( c \) denotes a generic positive constant whose value is independent of \( h \),
\( R \) in the stopping time \( t_{R,\varrho} \),
and $ R_h $ in \cref{eq:yh-regu}, and may vary from one occurrence to another.

Firstly, we summarize some standard results about the semigroup generated by
$ -A_h $ in $ \dot H_h^{0,2} $, defined by $ S_h(t) := \exp(-tA_h) $ for all $ t \in [0,\infty) $.
The deterministic convolution of $ S_h $ with $ g_h \in L^1(0,T;\dot H_h^{0,2}) $ is
given by
\[
  (S_h \ast g_h)(t) := \int_0^t S_h(t-s)g_h(s) \, \mathrm{d}s,
  \quad t \in [0,T].
\]
The stochastic convolution of $ S_h $ with
$ g_h \in L_\mathbb{F}^2(\Omega \times (0,T); \gamma(H,\dot H_h^{0,2})) $
is defined as
\[
  (S_h \diamond g_h)(t) := \int_0^t S_h(t-s)g_h(s) \, \mathrm{d}W_H(s),
  \quad t \in [0,T].
\]
\begin{lemma}
  \label{lem:Sh}
  The following properties of the semigroup $ S_h $ hold:
  \begin{enumerate}
    \item[(i)] For any \( 0 \leqslant \alpha \leqslant \beta \leqslant 2 \) and \( t \in (0, \infty) \), the following estimates hold:
      \begin{align*}
        & \norm{S_h(t)}_{\mathcal L(\dot H_h^{\alpha,2}, \dot H_h^{\beta,2})}
        \leqslant ct^{(\alpha-\beta)/2}, \\
        & \norm{I-S_h(t)}_{\mathcal L(\dot H_h^{\beta,2}, \dot H_h^{\alpha,2})}
        \leqslant ct^{(\beta-\alpha)/2}.
      \end{align*}
    \item[(ii)] For any \( g_h \in L^p(0,T; \dot{H}_h^{0,2}) \) with \( p \in (1, \infty) \), the following estimate holds:
      \begin{align*}
        & \Big\lVert \frac{\mathrm{d}}{\mathrm{d}t} S_h\ast g_h \Big\rVert_{L^p(0,T;\dot H_h^{0,2})}
        + \lVert S_h \ast g_h \rVert_{L^p(0,T;\dot H_h^{2,2})} \\
        & \quad {} + \| S_h \ast  g_h \|_{C([0,T]; (\dot{H}_h^{0,2}, \dot{H}_h^{2,2})_{1-1/p,p})} 
         \leqslant c \| g_h \|_{L^p(0,T; \dot{H}_h^{0,2})}.
      \end{align*}
    \item[(iii)] For any \( g_h \in L_\mathbb{F}^p(\Omega \times (0,T); \gamma(H,\dot{H}_h^{0,2})) \) with \( p \in [2, \infty) \), the following estimate holds:
      \[
        \| S_h \diamond g_h \|_{L^p(\Omega; C([0,T]; (\dot{H}_h^{0,2}, \dot{H}_h^{2,2})_{1/2-1/p,p}))} \leqslant c \| g_h \|_{L^p(\Omega \times (0,T); \gamma(H,\dot{H}_h^{0,2}))}.
      \]
  \end{enumerate}
\end{lemma}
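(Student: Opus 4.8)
The plan is to exploit that $A_h$ is self-adjoint and positive definite on the finite-dimensional Hilbert space $\mathbb{L}_{h,\text{div}}$ equipped with the $L^2$ inner product: indeed $\dual{A_h u_h, v_h} = \dual{\nabla u_h, \nabla v_h} = \dual{u_h, A_h v_h}$, and a discrete Poincar\'e inequality gives a strictly positive lowest eigenvalue. Consequently $A_h$ admits a spectral decomposition with positive eigenvalues, and every estimate below reduces, via the spectral theorem, to a scalar bound on $(0,\infty)$ whose constant depends only on the abstract parameters ($\alpha,\beta,p$) and never on the spectrum of $A_h$. This universality is precisely what delivers the $h$-independence claimed in the lemma.

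For (i), I would write $\norm{S_h(t)}_{\mathcal{L}(\dot H_h^{\alpha,2},\dot H_h^{\beta,2})} = \norm{A_h^{(\beta-\alpha)/2} e^{-tA_h}}_{\mathcal{L}(\dot H_h^{0,2})}$, which by the spectral theorem equals $\sup_{\lambda>0}\lambda^{(\beta-\alpha)/2}e^{-t\lambda}$; an elementary calculus maximization gives this supremum as $c\,t^{(\alpha-\beta)/2}$ with $c = c(\beta-\alpha)$. For the second estimate I would similarly reduce to $\sup_{\lambda>0}\lambda^{(\alpha-\beta)/2}(1-e^{-t\lambda})$ and use the elementary inequality $1-e^{-s}\leqslant s^{(\beta-\alpha)/2}$ for all $s>0$ (valid since $(\beta-\alpha)/2\in[0,1]$), which yields the bound $c\,t^{(\beta-\alpha)/2}$. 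Both constants are manifestly independent of $h$.

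The heart of (ii) and (iii) is maximal $L^p$-regularity with $h$-uniform constants. Here I would record the decisive observation that, being self-adjoint and positive, $A_h$ possesses a bounded $H^\infty(\Sigma_\sigma)$-functional calculus for every $\sigma\in(0,\pi)$ with $\norm{\psi(A_h)}_{\mathcal{L}(\dot H_h^{0,2})}\leqslant\norm{\psi}_{H^\infty(\Sigma_\sigma)}$; since this bound is just the sup-norm of the symbol, it does not depend on $h$. The deterministic estimate (ii) then follows from de Simon's maximal-regularity theorem on Hilbert spaces (equivalently, from the principle that a bounded $H^\infty$-calculus yields maximal $L^p$-regularity), whose constant is controlled solely by $p$ and the sectoriality/$H^\infty$ bound; the continuous-trace term in the interpolation space $(\dot H_h^{0,2},\dot H_h^{2,2})_{1-1/p,p}$ comes from the standard identification of this space with the initial-trace space of the maximal-regularity problem. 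The stochastic estimate (iii) is obtained by applying the stochastic maximal $L^p$-regularity theorem \cite[Theorem~3.5]{Neerven2012b} to $A_h$, whose hypotheses are met uniformly because of the same $h$-independent $H^\infty$-bound; the pathwise-continuous version lands in $(\dot H_h^{0,2},\dot H_h^{2,2})_{1/2-1/p,p}$, exactly as stated.

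The main obstacle is not the existence of these estimates---each is a known abstract theorem---but the $h$-uniformity of every constant. My strategy resolves this in one stroke: all constants in the abstract maximal-regularity and functional-calculus theorems depend only on the (universal) UMD/Hilbert-space constants of $\dot H_h^{0,2}$ and on the sectoriality and $H^\infty$-angle of $A_h$, and the latter are universal for self-adjoint positive operators. The only remaining subtlety is that the real-interpolation spaces $(\dot H_h^{0,2},\dot H_h^{2,2})_{\theta,p}$ must carry norms equivalent to the fractional-power-domain norms with $h$-independent equivalence constants; this again follows from the spectral theorem for $A_h$, in the same spirit as the complex-interpolation uniformity already invoked through \cite[Theorem~16.1]{Yagi2010}.
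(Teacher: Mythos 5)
Your proposal is correct and follows essentially the same route as the paper's own proof, which likewise reduces (i) to the spectral decomposition of the symmetric positive-definite operator $A_h$ (with smallest eigenvalue uniformly bounded below, cf.\ Chapter 3 of Thom\'ee), obtains (ii) from deterministic maximal $L^p$-regularity with trace space $(\dot H_h^{0,2},\dot H_h^{2,2})_{1-1/p,p}$ (via Lunardi), and obtains (iii) from the stochastic maximal $L^p$-regularity theorem of van Neerven--Veraar--Weis, with $h$-uniformity coming in each case from the universal sectoriality/$H^\infty$ bounds of self-adjoint positive operators. The only difference is one of presentation: the paper cites the abstract theorems, whereas you spell out the scalar spectral estimates and the uniform $H^\infty$-calculus argument explicitly, which is a faithful elaboration rather than a different method.
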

\begin{proof}
  The operator \( A_h \) is symmetric and positive-definite on \( \dot{H}_h^{0,2} \), satisfying
  \[
  \langle A_h v_h, v_h \rangle = \|\nabla v_h\|_{\mathbb{L}^2}^2 \quad \forall v_h \in \dot{H}_h^{0,2}.
  \]
  This implies that the smallest eigenvalue of \( A_h \) is uniformly bounded away from zero with respect
  to \( h \). As a result, part (i) can be established
  through direct computation involving the spectral decomposition of \(A_h\); for related techniques, see Chapter 3 of \cite{Thomee2006}.
  For part (ii), we refer the reader to Theorems 4.24 and 4.29, and Corollary 1.14 in \cite{Lunardi2018}.
  Regarding part (iii), the reader may consult, for example, Theorems~2.5 and 3.5 in \cite{Neerven2012b} and Theorem 10.2.24 in \cite{HytonenWeis2017}; see also \cite[Theorem~6.20]{Prato2014}.
\end{proof}

Secondly, we present some results on the Helmholtz projection operator $ \mathcal P $
and the discrete Helmholtz projection operator \( \mathcal{P}_h \).

\begin{lemma}
  \label{lem:Ph}
  The following assertions hold:
  \begin{enumerate}
    \item[(i)] For any \( \alpha \in [0, 2] \), it holds that
      $ \norm{I - \mathcal{P}_h}_{\mathcal{L}(\dot{H}^{\alpha, 2}, \mathbb{L}^2)} \leqslant c h^\alpha $.
    \item[(ii)] For all \( \alpha \in [0, 2] \), \( \mathcal{P}_h \) is uniformly bounded  in \( \mathcal{L}(\dot{H}^{\alpha, 2}, \dot{H}_h^{\alpha, 2}) \) with respect to \( h \).
    \item[(iii)] For all \( u_h \in \mathbb L_{h,\mathrm{div}} \) and \( v \in \mathbb{H}^{\alpha,2} \) with
      \( \alpha \in [0, 2] \), the following duality estimate holds:
      \begin{align}
        \langle u_h, v - \mathcal{P} v \rangle
        \leqslant c h^{\alpha+1} \, \|\nabla u_h\|_{\mathbb{L}^2} \, \|v\|_{\mathbb{H}^{\alpha,2}}.
        \label{eq:918} 
      \end{align}
    \item[(iv)] For any \( \alpha \in [0, 2] \), the following inequality holds:
      \begin{equation}
        \label{eq:919}
        \norm{\mathcal P_h(I-\mathcal P)}_{\mathcal L(\mathbb H^{\alpha,2},\mathbb L^2)}
        \leqslant ch^{\alpha}.
      \end{equation}
    \item[(v)] For any \( \alpha \in [0, 1] \), the following approximation property holds:
      \begin{equation}
        \label{eq:1000}
        \norm{I-\mathcal P}_{\mathcal L(\dot H_h^{\alpha,2},\mathbb L^2)}
        \leqslant ch^\alpha.
      \end{equation}
    \item[(vi)] For any \( \alpha \in [0, 1/2) \),
      \( \mathcal{P}_h \) is uniformly bounded in \( \mathcal{L}(\mathbb{H}^{\alpha,2}, \dot{H}_h^{\alpha,2}) \) with respect to \( h \). 
  \end{enumerate}
\end{lemma}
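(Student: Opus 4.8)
The plan is to prove the six assertions by repeatedly exploiting three structural facts and one interpolation device. The structural facts are: (a) $\mathcal{P}_h$ is the $\mathbb{L}^2$-orthogonal projection onto $\mathbb{L}_{h,\mathrm{div}}$, hence a contraction realizing the best $\mathbb{L}^2$-approximation; (b) every $u_h\in\mathbb{L}_{h,\mathrm{div}}$, being discretely divergence-free and vanishing on $\partial\mathcal{O}_h$, satisfies $\langle u_h,\nabla\phi_h\rangle=0$ for all $\phi_h\in M_h$; and (c) the Helmholtz decomposition identifies $(I-\mathcal{P})v$ with a gradient $\nabla\phi$, where $\|\phi\|_{H^{\alpha+1}}\leqslant c\|\nabla\phi\|_{\mathbb{H}^{\alpha,2}}\leqslant c\|v\|_{\mathbb{H}^{\alpha,2}}$ thanks to $\mathcal{P}\in\mathcal{L}(\mathbb{H}^{\alpha,2},\mathbb{H}^{\alpha,2})$ from \cref{lem:P}. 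The device is complex interpolation on the $h$-uniform scale $\dot H_h^{\theta,2}$ (justified in the text via \cite[Theorem~16.1]{Yagi2010}), combined with the inverse estimate \cref{eq:inverse}, to pass between integer endpoints and fractional $\alpha$.

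For (i) I would prove the two endpoints and interpolate. At $\alpha=0$, $\|I-\mathcal{P}_h\|_{\mathcal{L}(\dot H^{0,2},\mathbb{L}^2)}\leqslant 1$ since $\mathcal{P}_h$ is an orthogonal projection; at $\alpha=2$, the inf-sup stability and Fortin operator of the Taylor-Hood pair (\cite{Brezzi1991,Girault2003,Stenberg1990}) furnish an interpolant $\Pi_h u\in\mathbb{L}_{h,\mathrm{div}}$ with $\|u-\Pi_h u\|_{\mathbb{L}^2}\leqslant ch^2\|u\|_{\dot H^{2,2}}$, and best-approximation gives $\|(I-\mathcal{P}_h)u\|_{\mathbb{L}^2}\leqslant\|u-\Pi_h u\|_{\mathbb{L}^2}$; interpolation with $\dot H^{\alpha,2}=[\dot H^{0,2},\dot H^{2,2}]_{\alpha/2}$ (Giga's identification recalled in the text) yields the $h^\alpha$ rate. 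For (ii) I again reduce to $\alpha\in\{0,2\}$, the case $\alpha=0$ being the contraction property, and treat the endpoint $\|A_h\mathcal{P}_h u\|_{\mathbb{L}^2}\leqslant c\|A_2u\|_{\mathbb{L}^2}$ by comparison with the discrete Stokes projection $R_hu\in\mathbb{L}_{h,\mathrm{div}}$ defined by $\langle\nabla R_hu,\nabla v_h\rangle=\langle\nabla u,\nabla v_h\rangle$. Testing against $v_h\in\mathbb{L}_{h,\mathrm{div}}$ and writing $-\Delta u=A_2u+\nabla p$ with $\|p\|_{H^1}\leqslant c\|u\|_{\dot H^{2,2}}$ shows $A_hR_hu=\mathcal{P}_hA_2u+E_h$, where the pressure consistency term $\langle E_h,v_h\rangle=-\langle p-\Pi_h p,\nabla\cdot v_h\rangle$ obeys $\|E_h\|_{\mathbb{L}^2}\leqslant c\|u\|_{\dot H^{2,2}}$ after using the $M_h$-approximation of $p$ and \cref{eq:inverse}; the remainder $\|A_h(\mathcal{P}_hu-R_hu)\|_{\mathbb{L}^2}\leqslant ch^{-2}\|\mathcal{P}_hu-R_hu\|_{\mathbb{L}^2}\leqslant c\|u\|_{\dot H^{2,2}}$ follows from \cref{eq:inverse}, part~(i), and the Aubin-Nitsche $\mathbb{L}^2$-error of $R_h$.

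For (iii), integration by parts and (b) give $\langle u_h,\nabla\phi\rangle=-\langle\nabla\cdot u_h,\phi-\phi_h\rangle$ for any $\phi_h\in M_h$, so choosing the best $M_h$-approximant of $\phi$ produces $h^{\alpha+1}\|v\|_{\mathbb{H}^{\alpha,2}}$ against $\|\nabla\cdot u_h\|_{\mathbb{L}^2}\leqslant\|\nabla u_h\|_{\mathbb{L}^2}$. Then (iv) is immediate: with $w_h=\mathcal{P}_h(I-\mathcal{P})v$, we have $\|w_h\|_{\mathbb{L}^2}^2=\langle(I-\mathcal{P})v,w_h\rangle\leqslant ch^{\alpha+1}\|\nabla w_h\|_{\mathbb{L}^2}\|v\|_{\mathbb{H}^{\alpha,2}}\leqslant ch^{\alpha}\|w_h\|_{\mathbb{L}^2}\|v\|_{\mathbb{H}^{\alpha,2}}$ by (iii) and one more use of \cref{eq:inverse}. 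Claim (v) runs identically with $(I-\mathcal{P})$ in place of $\mathcal{P}_h(I-\mathcal{P})$: for $u_h\in\mathbb{L}_{h,\mathrm{div}}$, $\|(I-\mathcal{P})u_h\|_{\mathbb{L}^2}^2=\langle u_h,(I-\mathcal{P})u_h\rangle=-\langle\nabla\cdot u_h,\psi-\psi_h\rangle$ gives the $\alpha=1$ bound $ch\|\nabla u_h\|_{\mathbb{L}^2}=ch\|u_h\|_{\dot H_h^{1,2}}$, the $\alpha=0$ bound being the contraction, and interpolation on the discrete scale finishes. Finally (vi) splits $v=\mathcal{P}v+(I-\mathcal{P})v$: the solenoidal piece lies in $\dot H^{\alpha,2}$ through the embedding \cref{eq:0}, valid precisely for $\alpha<1/q=1/2$, and is controlled by (ii), while for the gradient piece the inverse inequality upgrades the bound of (iv) to $\|A_h^{\alpha/2}\mathcal{P}_h(I-\mathcal{P})v\|_{\mathbb{L}^2}\leqslant ch^{-\alpha}\cdot ch^{\alpha}\|v\|_{\mathbb{H}^{\alpha,2}}$.

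I expect the endpoint estimate in (ii) to be the main obstacle, since it is the only place demanding genuine discrete regularity of the Stokes projection—control of the pressure consistency error together with an $H^2$-type stability—rather than orthogonality plus interpolation; in particular one should avoid relying on $H^1$-stability of the $\mathbb{L}^2$-projection onto $\mathbb{L}_{h,\mathrm{div}}$, which is why the $H^1$-stable Ritz projection $R_h$ is introduced as an intermediary. A secondary technical nuisance, handled uniformly across all parts, is the mismatch between the curved domain $\mathcal{O}$ and the polygonal $\mathcal{O}_h$: the integration-by-parts identities and interpolation estimates must be read on $\mathcal{O}_h$, with boundary-skin contributions absorbed using the $\mathcal{C}^{3,1}$-regularity of $\partial\mathcal{O}$ and the zero extension; I expect these to be of higher order and not to affect the stated rates.
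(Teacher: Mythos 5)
Your proposal is correct, and for parts (iii)--(vi) it coincides with the paper's own argument: the Helmholtz potential \( v-\mathcal{P}v=\nabla\varphi \) with \( \norm{\varphi}_{H^{\alpha+1,2}}\leqslant c\norm{v}_{\mathbb{H}^{\alpha,2}} \), the orthogonality of \( \nabla\cdot u_h \) to \( M_h \) combined with the best \( M_h \)-approximation of \( \varphi \) for \cref{eq:918}, the duality-plus-inverse-estimate bootstrap yielding \cref{eq:919} and \cref{eq:1000}, and the splitting \( v=\mathcal{P}v+(I-\mathcal{P})v \) handled via \cref{lem:P}, assertions (ii) and (iv), and \cref{eq:inverse} for (vi). The only differences are cosmetic or additive: the paper passes from \( \alpha=1 \) to \( \alpha\in[0,1] \) in (v) directly by the inverse estimate rather than by discrete-scale interpolation, and it omits (i)--(ii) as well known, whereas you supply endpoint-plus-interpolation proofs whose nontrivial ingredient — the \( \dot{H}^{2,2} \)-endpoint stability of \( \mathcal{P}_h \) via the discretely divergence-free Ritz projection with a pressure-consistency term — is sound and is in fact the same device the paper itself deploys in Step 1 of the proof of \cref{lem:y-phy-xih}.
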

\begin{proof}
Assertions (i) and (ii) are well-known results, and their proofs are omitted. We now prove assertion (iii). Fix \( u_h \in \mathbb{L}_{h,\mathrm{div}} \) and \( v \in \mathbb{H}^{\alpha,2} \) with \( \alpha \in [0, 2] \). By Lemma~III.1.2 in \cite{Galdi2011} and Theorem~III.4.3 in \cite{Boyer2012}, there exists \( \varphi \in H^{\alpha+1,2} \) such that \( v - \mathcal{P}v = \nabla \varphi \), with  
\begin{equation}
\|\varphi\|_{H^{\alpha+1,2}} \leqslant c \|v\|_{\mathbb{H}^{\alpha,2}}.  
\label{eq:921}  
\end{equation}
The proof proceeds as follows:  
\begin{align*}  
\langle u_h, v - \mathcal{P}v \rangle  
&= \langle u_h, \nabla \varphi \rangle  
= -\langle \nabla \cdot u_h, \varphi \rangle \quad \text{(integration by parts)} \\  
&= -\langle \nabla \cdot u_h, \varphi - \Pi_h \varphi \rangle \quad \text{(since } u_h \in \mathbb{L}_{h,\text{div}} \text{)} \\  
&\leqslant \|\nabla \cdot u_h\|_{L^2} \, \|\varphi - \Pi_h \varphi\|_{L^2} \\  
&\leqslant c\|\nabla u_h\|_{\mathbb{L}^2} \, \|\varphi - \Pi_h \varphi\|_{L^2},  
\end{align*}  
where \( \Pi_h \varphi \) is the \( L^2 \)-orthogonal projection of \( \varphi \)
onto \( M_h \). Combining \cref{eq:921} with the standard approximation estimate  
\[  
\|\varphi - \Pi_h \varphi\|_{L^2} \leqslant ch^{\alpha+1} \, \|\varphi\|_{H^{\alpha+1,2}},  
\]  
yields the desired estimate \cref{eq:918}.

For assertion (iv), applying \cref{eq:918} yields for \( \alpha \in [0,2] \) and \( u \in \mathbb H^{\alpha,2} \):
\begin{align*}  
  \norm{\mathcal P_h(u-\mathcal Pu)}_{\mathbb L^2}^2  
  &= \dual{\mathcal P_h(u-\mathcal Pu), u - \mathcal Pu} \\  
  &\leqslant ch^{\alpha+1} \norm{\nabla\mathcal P_h(u-\mathcal Pu)}_{\mathbb L^2} \norm{u}_{\mathbb H^{\alpha,2}} \\  
  &\leqslant ch^{\alpha} \norm{\mathcal P_h(u-\mathcal Pu)}_{\mathbb L^2} \norm{u}_{\mathbb H^{\alpha,2}},  
  \quad\text{(by \cref{eq:inverse})}
\end{align*}  
which implies \cref{eq:919}.

We now proceed to prove assertion (v). For any \( u_h \in \mathbb L_{h,\mathrm{div}} \),
there exists \( \varphi \in H^{1,2} \) such that \( u_h - \mathcal Pu_h = \nabla\varphi \) with \( \norm{\varphi}_{H^{1,2}} \leqslant c \norm{u_h-\mathcal Pu_h}_{\mathbb L^2} \). Thus,
\begin{align*}
  \norm{u_h-\mathcal Pu_h}_{\mathbb L^2}^2
  &= \dual{u_h - \mathcal Pu_h, \nabla\varphi}
  = -\dual{\nabla\cdot u_h, \varphi} \\
  & \leqslant ch\norm{\nabla u_h}_{\mathbb L^2} \norm{\varphi}_{H^{1,2}}
  \leqslant ch\norm{u_h}_{\dot H_h^{1,2}} \norm{u_h-\mathcal Pu_h}_{\mathbb L^2}.
\end{align*}
By invoking \cref{eq:inverse}, it follows that \cref{eq:1000} holds
for all \( \alpha \in [0,1] \). Here, we have used techniques analogous
to those applied in proving assertion (iii).

Finally, we prove assertion (vi). Fix \( \alpha \in [0,1/2) \).
Using \cref{eq:919} and the inverse estimate \cref{eq:inverse}, we infer that
\[
  \norm{\mathcal P_hu}_{\dot H_h^{\alpha,2}}
  \leqslant \norm{\mathcal P_h\mathcal Pu}_{\dot H_h^{\alpha,2}}
  + c\norm{u}_{\mathbb H^{\alpha,2}},
  \quad \forall u \in \mathbb H^{\alpha,2}.
\]
By \cref{lem:P} and assertion (ii) of this lemma,
\( \norm{\mathcal{P}_h}_{\mathcal{L}(\mathbb{H}^{\alpha,2},\dot{H}_h^{\alpha,2})} \)
is uniformly bounded in \( h \). This completes the proof of assertion (vi) and the lemma.  
\end{proof}

Thirdly, for any \(\alpha \in [0,1]\), we define the operator \(\mathscr{P}_h: \dot{H}^{-\alpha,2} \to \mathbb{L}_{h,\mathrm{div}}\) by
\[
\mathscr{P}_h := \mathcal{P}_h(-\Delta_2) A_2^{-1},
\]
where:
\begin{itemize}
\item \(A_2\) extends to a bounded linear operator from \(\dot H^{2-\alpha,2}\) to \(\dot H^{-\alpha,2}\) via the duality pairing:
  \[
  \dual{A_2 u, v}_{\dot H^{-\alpha,2}, \dot H^{\alpha,2}}
  := \dual{A_2^{1-\alpha/2}u, A_2^{\alpha/2}v},
  \quad\forall u \in \dot H^{2-\alpha,2}, \, \forall v \in \dot H^{\alpha,2}.
  \]
  This extension admits a bounded inverse $ A_2^{-1} $ from $ \dot H^{-\alpha,2} $ to $ \dot H^{2-\alpha,2} $.
\item \(-\Delta_2\) extends to a bounded linear operator from \(\mathcal H^{2-\alpha,2}\) to \(\mathcal H^{-\alpha,2}\) via the duality pairing:
  \[
  \dual{-\Delta_2 u, v}_{\mathcal H^{-\alpha,2}, \mathcal H^{\alpha,2}}
  := \dual{(-\Delta_2)^{1-\alpha/2}u, (-\Delta_2)^{\alpha/2}v},
  \quad\forall u \in \mathcal H^{2-\alpha,2}, \, \forall v \in \mathcal H^{\alpha,2}.
  \]
\item \(\mathcal{P}_h\) extends to \(\mathcal{H}^{-\alpha,2} \to \mathbb{L}_{h,\mathrm{div}}\) by
  \[
  \langle \mathcal{P}_h f, u_h \rangle = \langle f, u_h \rangle_{\mathcal{H}^{-\alpha,2}, \mathcal{H}^{\alpha,2}}, \quad 
  \forall f \in \mathcal{H}^{-\alpha,2},\ u_h \in \mathbb{L}_{h,\mathrm{div}}.
  \]
\end{itemize}
This operator satisfies the following properties.
\begin{lemma}
  \label{lem:scrPh}
  Let $ \alpha \in [0,2] $. The following assertions hold:
  \begin{enumerate}
    \item[(i)] For any $ u_h \in \mathbb L_{h,\mathrm{div}} $ and $ v \in \mathbb H^{\alpha,2} $, we have  
      \begin{align*}
        \dual{u_h, \, (\mathscr P_h\mathcal P - I)v}
        \leqslant ch^{\alpha+1}\norm{\nabla u_h}_{\mathbb L^2}
        \norm{v}_{\mathbb H^{\alpha,2}}.
      \end{align*}
    \item[(ii)] For any $ u \in \mathbb L_{\mathrm{div}}^2 \cap \mathbb H^{\alpha,2} $,  
      it holds that $ \norm{(\mathcal P_h-\mathscr P_h)u}_{\mathbb L^2} \leqslant ch^{\alpha} \norm{u}_{\mathbb H^{\alpha,2}} $.
    \item[(iii)] The operator norm satisfies  
      $ \norm{I - \mathscr P_h}_{\mathcal L(\dot H^{\alpha,2},\mathbb L^2)} \leqslant ch^\alpha $.
  \end{enumerate}
\end{lemma}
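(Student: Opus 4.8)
The plan is to reduce all three assertions to a single variational identity for $\mathscr{P}_h$ together with a standard stationary-Stokes pressure estimate. First I would establish, for sufficiently regular $v$ and every $u_h \in \mathbb{L}_{h,\mathrm{div}}$, the key identity
\[
  \dual{\mathscr{P}_h v, u_h} = \dual{\nabla w, \nabla u_h} = \dual{v, u_h} - \dual{p - \Pi_h p, \nabla\cdot u_h},
\]
where $w := A_2^{-1}v \in \mathrm{D}(A_2)$, the pressure $p \in H^{1,2}$ is determined by the Helmholtz decomposition $-\Delta_2 w = A_2 w + (I-\mathcal{P})(-\Delta_2 w) = v + \nabla p$, and $\Pi_h$ is the $L^2$-orthogonal projection onto $M_h$. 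The first equality uses that $\mathcal{P}_h$ is the self-adjoint $\mathbb{L}^2$-projection onto $\mathbb{L}_{h,\mathrm{div}}$ together with Green's formula (as $u_h \in H_0^1$); the last uses $A_2 w = v$ and the discrete divergence-free property $\dual{\Pi_h p, \nabla\cdot u_h}=0$. For general $\alpha \in [0,2]$ this is justified by density and the extension conventions fixed before the lemma.

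Given this identity, assertion (i) follows quickly. For $v \in \mathbb{H}^{\alpha,2}$ set $\tilde v := \mathcal{P}v$ and let $(\tilde w,\tilde p)$ solve the stationary Stokes problem with right-hand side $\tilde v$, so $\tilde w = A_2^{-1}\tilde v$. Applying the identity to $\tilde v$ and subtracting $\dual{v,u_h}$ gives
\[
  \dual{(\mathscr{P}_h\mathcal{P} - I)v, u_h} = -\dual{u_h,\, v - \mathcal{P}v} - \dual{\tilde p - \Pi_h\tilde p,\, \nabla\cdot u_h}.
\]
The first term is bounded by $ch^{\alpha+1}\norm{\nabla u_h}_{\mathbb{L}^2}\norm{v}_{\mathbb{H}^{\alpha,2}}$ directly via \cref{lem:Ph}(iii). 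For the second, stationary-Stokes regularity on the $\mathcal{C}^{3,1}$ domain yields $\tilde p \in H^{\alpha+1,2}$ with $\norm{\tilde p}_{H^{\alpha+1,2}/\mathbb{R}} \leqslant c\norm{\tilde v}_{\mathbb{H}^{\alpha,2}} \leqslant c\norm{v}_{\mathbb{H}^{\alpha,2}}$; since $\dual{\text{const},\nabla\cdot u_h}=0$ (as $u_h=0$ on $\partial\mathcal{O}_h$), I may subtract the mean of $\tilde p$ and use the $P_2$ approximation estimate $\norm{\tilde p - \Pi_h\tilde p}_{L^2}\leqslant ch^{\alpha+1}\abs{\tilde p}_{H^{\alpha+1,2}}$ (valid for $\alpha+1\leqslant 3$), together with $\norm{\nabla\cdot u_h}_{L^2}\leqslant c\norm{\nabla u_h}_{\mathbb{L}^2}$. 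This establishes assertion (i).

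Assertion (ii) then follows by a Galerkin-orthogonality and inverse-estimate argument. For $u \in \mathbb{L}_{\mathrm{div}}^2\cap\mathbb{H}^{\alpha,2}$ we have $\mathcal{P}u=u$, so testing with $u_h := (\mathcal{P}_h-\mathscr{P}_h)u \in \mathbb{L}_{h,\mathrm{div}}$ and using $\dual{\mathcal{P}_h u - u, u_h}=0$ (self-adjointness of $\mathcal{P}_h$) reduces $\norm{u_h}_{\mathbb{L}^2}^2$ to $\dual{(I-\mathscr{P}_h)u, u_h} = -\dual{(\mathscr{P}_h\mathcal{P}-I)u, u_h}$, which by (i) is at most $ch^{\alpha+1}\norm{\nabla u_h}_{\mathbb{L}^2}\norm{u}_{\mathbb{H}^{\alpha,2}}$; the inverse estimate \cref{eq:inverse} gives $\norm{\nabla u_h}_{\mathbb{L}^2}\leqslant ch^{-1}\norm{u_h}_{\mathbb{L}^2}$, and dividing through yields (ii). Assertion (iii) is then the triangle inequality $\norm{(I-\mathscr{P}_h)v}_{\mathbb{L}^2}\leqslant \norm{(I-\mathcal{P}_h)v}_{\mathbb{L}^2} + \norm{(\mathcal{P}_h-\mathscr{P}_h)v}_{\mathbb{L}^2}$: for $v\in\dot{H}^{\alpha,2}\subset\mathbb{L}_{\mathrm{div}}^2\cap\mathbb{H}^{\alpha,2}$ (interpolating $\mathbb{L}_{\mathrm{div}}^2\hookrightarrow\mathbb{L}^2$ and $\dot{H}^{2,2}\hookrightarrow\mathbb{H}^{2,2}$) the first term is controlled by \cref{lem:Ph}(i) and the second by (ii), both by $ch^\alpha\norm{v}_{\dot{H}^{\alpha,2}}$. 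The main obstacle is assertion (i): all the difficulty lies in correctly setting up the variational identity through the extension/duality conventions for $A_2$, $-\Delta_2$, and $\mathcal{P}_h$, and in invoking sharp stationary-Stokes pressure regularity uniformly up to $\alpha=2$; once (i) is in hand, (ii) and (iii) are routine.
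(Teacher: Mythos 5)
Your proposal is correct and follows essentially the same route as the paper: for (i) the paper likewise unpacks $\mathscr P_h\mathcal P v$ via $A_2=-\mathcal P\Delta_2$ into the term $(\mathcal P-I)\big(\Delta_2A_2^{-1}\mathcal Pv+v\big)$, applies the duality estimate of \cref{lem:Ph}(iii) (whose proof is exactly your pressure-potential/$\Pi_h$ argument), and invokes the stationary Stokes regularity of \cite[Theorem~IV.5.8]{Boyer2012} with interpolation — your only deviation is to inline that duality estimate with the Stokes pressure $\tilde p$ made explicit rather than citing \cref{lem:Ph}(iii) for the combined function. Your proofs of (ii) (testing with $u_h=(\mathcal P_h-\mathscr P_h)u$, using $\mathcal P_h$-orthogonality, assertion (i), and the inverse estimate \cref{eq:inverse}) and of (iii) (splitting $I-\mathscr P_h=(I-\mathcal P_h)+(\mathcal P_h-\mathscr P_h)$ and using \cref{lem:Ph}(i)) coincide with the paper's.
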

\begin{proof}
To begin with assertion (i), consider any $u_h \in \mathbb{L}_{h,\mathrm{div}}$ and $v \in \mathbb{H}^{\alpha,2}$. By definition,
\begin{align*}
  \langle u_h, (\mathscr{P}_h\mathcal{P} - I)v \rangle 
  = \left\langle u_h, \, (\mathcal{P} - I) \big(\Delta_2 A_2^{-1}\mathcal{P}v + v\big) \right\rangle.
\end{align*}
Applying Lemma \ref{lem:Ph}(iii) yields
\begin{align*}
    \langle u_h, (\mathscr{P}_h\mathcal{P} - I)v \rangle 
    &\leqslant c h^{\alpha+1} \norm{\nabla u_h}_{\mathbb{L}^2} \left( \norm{\Delta_2 A_2^{-1}\mathcal{P}v}_{\mathbb{H}^{\alpha,2}} + \norm{v}_{\mathbb{H}^{\alpha,2}} \right).
\end{align*}
Using Theorem IV.5.8 from \cite{Boyer2012} and interpolation arguments, we deduce
\[
    \norm{\Delta_2 A_2^{-1}\mathcal{P}v}_{\mathbb{H}^{\alpha,2}} \leqslant c \norm{v}_{\mathbb{H}^{\alpha,2}},
\]
which, combined with the previous inequality, completes the proof for assertion (i).

For assertion (ii), let $u \in \mathbb{L}_{\mathrm{div}}^2 \cap \mathbb{H}^{\alpha,2}$. Then,
\begin{align*}
    \norm{(\mathcal{P}_h - \mathscr{P}_h)u}_{\mathbb{L}^2}^2 
    &= \langle (\mathcal{P}_h - \mathscr{P}_h)u, (\mathcal{P}_h - \mathscr{P}_h)u \rangle \\
    &= \langle (\mathcal{P}_h - \mathscr{P}_h)u, (I - \mathscr{P}_h\mathcal{P})u \rangle.
\end{align*}
By assertion (i) and the inverse estimate \cref{eq:inverse}, we obtain
\begin{align*}
    \norm{(\mathcal{P}_h - \mathscr{P}_h)u}_{\mathbb{L}^2}^2 
    &\leqslant c h^{\alpha} \norm{(\mathcal{P}_h - \mathscr{P}_h)u}_{\mathbb{L}^2} \norm{u}_{\mathbb{H}^{\alpha,2}},
\end{align*}
which implies the desired conclusion for assertion (ii).

Finally, for assertion (iii), given the identity
\[
    I - \mathscr{P}_h = (I - \mathcal{P}_h) + (\mathcal{P}_h - \mathscr{P}_h),
\]
the assertion follows by combining Lemma \ref{lem:Ph}(i) with assertion (ii).
This concludes the proof.
\end{proof}

Fourthly, we present some auxiliary estimates.

\begin{lemma}
  \label{lem:y-phy-xih}
  Assume that the hypotheses of Theorem \ref{thm:yR-yhR} are fulfilled. Let
  \[
    \xi_h := S_h \ast (\mathscr P_h A_2 y - A_h \mathscr P_h y).
  \]
  Then, for any $ 0 < h < 1/2 $ and \( R > 1 \), the following inequalities hold almost surely on
  the set $ \{\norm{y_0}_{\dot H^{\varrho,2}}\leqslant R\}$:
  \begin{align}
        & \norm{y - \mathscr{P}_h y - \xi_h}_{C([0, t_{R,\varrho}]; \mathbb{L}^2)} \leqslant c h^\varrho
        \left(\ln\frac{1}{h}\right) R,
        \label{eq:y-Phy-xih-L2} \\
        & \norm{\mathscr{P}_h y + \xi_h}_{C([0,t_{R,\varrho}];\mathbb{L}^\infty)} \leqslant c R,  \label{eq:phy-xih-Linfty} \\
        & \norm{\nabla(\mathscr{P}_h y + \xi_h)}_{C([0,t_{R,\varrho}];\mathbb{L}^{q_0})} \leqslant c R,  \label{eq:phy-xih-H1q}
  \end{align}
  where \( q_0 \in (2, \frac{2}{2-\varrho}) \).
\end{lemma}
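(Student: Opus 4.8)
The plan is to study the auxiliary discrete field $w_h := \mathscr P_h y + \xi_h$, whose virtue is that differentiating $\xi_h = S_h\ast(\mathscr P_h A_2 y - A_h\mathscr P_h y)$ and inserting the mild equation for $y$ makes the two terms $\mathscr P_h A_2 y$ cancel, so that $w_h$ solves the clean discrete equation $\mathrm dw_h = [-A_h w_h - \mathscr P_h\mathcal P G(y)]\,\mathrm dt + \mathscr P_h\mathcal P F(y)\,\mathrm dW_H$ with $w_h(0)=\mathscr P_h y_0$; this is the property that will later make $w_h$ directly comparable with $y_h$ in \cref{thm:yR-yhR}, but for the present lemma I would argue pathwise and directly. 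Writing $g_h := \mathscr P_h A_2 y - A_h\mathscr P_h y$, integration by parts gives the Galerkin identity $\dual{g_h,v_h} = \dual{\nabla(y-\mathscr P_h y),\nabla v_h}$ for every $v_h\in\mathbb L_{h,\mathrm{div}}$, valid on $[0,t_{R,\varrho}]$ (where $A_2 y(s)\in\dot H^{\varrho-2,2}$, so $\mathscr P_h$ acts through its extension to $\dot H^{\varrho-2,2}$). Estimate \cref{eq:y-Phy-xih-L2} then splits into $\norm{y-\mathscr P_h y}_{\mathbb L^2}\leqslant ch^\varrho R$ and $\norm{\xi_h}_{\mathbb L^2}\leqslant ch^\varrho(\ln\frac1h)R$: the first is immediate from \cref{lem:scrPh}(iii) and $\norm{y}_{\dot H^{\varrho,2}}\leqslant R$ on $[0,t_{R,\varrho}]$, and the second is the heart of the matter. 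Estimates \cref{eq:phy-xih-Linfty,eq:phy-xih-H1q} will then follow from \cref{eq:y-Phy-xih-L2} by inverse estimates.

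To bound $\xi_h=S_h\ast g_h$ I would identify $g_h = A_h(R_h y-\mathscr P_h y)$, where $R_h y\in\mathbb L_{h,\mathrm{div}}$ is the discrete Stokes projection defined by $\dual{\nabla R_h y,\nabla v_h}=\dual{\nabla y,\nabla v_h}$ for all $v_h\in\mathbb L_{h,\mathrm{div}}$; this follows from the Galerkin identity together with $\mathscr P_h A_2 y = A_h R_h y$. Hence $\norm{g_h}_{\dot H_h^{-2,2}} = \norm{A_h^{-1}g_h}_{\mathbb L^2} = \norm{R_h y-\mathscr P_h y}_{\mathbb L^2}$, which I bound by $ch^\varrho R$ via the triangle inequality, the classical low-regularity $\mathbb L^2$-estimate $\norm{y-R_h y}_{\mathbb L^2}\leqslant ch^\varrho\norm{y}_{\dot H^{\varrho,2}}$ (Aubin--Nitsche duality, using the $H^2$-regularity guaranteed by the $\mathcal C^{3,1}$-smoothness of $\partial\mathcal O$ and the $P_3/P_2$ inf-sup stability), and \cref{lem:scrPh}(iii) for the remaining piece. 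The inverse estimate \cref{eq:inverse} then gives the companion bound $\norm{g_h}_{\mathbb L^2}=\norm{R_hy-\mathscr P_hy}_{\dot H_h^{2,2}}\leqslant ch^{\varrho-2}R$. With both bounds in hand, I split the convolution at the parabolic scale $\tau=h^2$: on $[0,t-h^2]$ I apply the smoothing $\norm{S_h(\tau)}_{\mathcal L(\dot H_h^{-2,2},\dot H_h^{0,2})}\leqslant c\tau^{-1}$ (spectral calculus for $A_h$, as in \cref{lem:Sh}(i)) against $\norm{g_h}_{\dot H_h^{-2,2}}$, producing $\int_0^{t-h^2}\tau^{-1}\,\mathrm d\tau\leqslant c\ln\frac1h$, while on $[t-h^2,t]$ I use the uniform boundedness of $S_h$ on $\dot H_h^{0,2}$ against $\norm{g_h}_{\mathbb L^2}$, producing $ch^2\cdot h^{\varrho-2}R = ch^\varrho R$. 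Adding the two contributions yields $\norm{\xi_h}_{C([0,t_{R,\varrho}];\mathbb L^2)}\leqslant ch^\varrho(\ln\frac1h)R$, which is exactly the logarithmic loss in \cref{eq:y-Phy-xih-L2}.

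For \cref{eq:phy-xih-Linfty,eq:phy-xih-H1q} I would compare $w_h$ with the $\mathbb L^2$-projection $\mathcal P_h y\in\mathbb L_{h,\mathrm{div}}$ (or any interpolant into $\mathbb L_{h,\mathrm{div}}$ that is stable in $\mathbb L^\infty$ and in the $\mathbb L^{q_0}$-gradient seminorm and $O(h^\varrho)$-accurate in $\mathbb L^2$). Since $\varrho>1$ and $q_0<\frac{2}{2-\varrho}$ give the Sobolev bounds $\norm{v}_{\mathbb L^\infty}+\norm{\nabla v}_{\mathbb L^{q_0}}\leqslant c\norm{v}_{\dot H^{\varrho,2}}$, the stability of $\mathcal P_h$ on the quasi-uniform mesh yields $\norm{\mathcal P_h y}_{\mathbb L^\infty}+\norm{\nabla\mathcal P_h y}_{\mathbb L^{q_0}}\leqslant cR$ on $[0,t_{R,\varrho}]$. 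For the discrete remainder $w_h-\mathcal P_h y\in\mathbb L_{h,\mathrm{div}}$, combining \cref{eq:y-Phy-xih-L2} with \cref{lem:Ph}(i) gives $\norm{w_h-\mathcal P_h y}_{\mathbb L^2}\leqslant ch^\varrho(\ln\frac1h)R$, and the inverse inequalities $\norm{v_h}_{\mathbb L^\infty}\leqslant ch^{-1}\norm{v_h}_{\mathbb L^2}$ and $\norm{\nabla v_h}_{\mathbb L^{q_0}}\leqslant ch^{-2+2/q_0}\norm{v_h}_{\mathbb L^2}$ turn this into $ch^{\varrho-1}(\ln\frac1h)R$ and $ch^{\varrho-2+2/q_0}(\ln\frac1h)R$ respectively, both $\leqslant cR$ for $0<h<\frac12$ precisely because $\varrho>1$ and $q_0<\frac{2}{2-\varrho}$ make the two exponents positive. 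The triangle inequality then delivers \cref{eq:phy-xih-Linfty,eq:phy-xih-H1q}.

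The main obstacle is the sharp low-regularity negative-norm estimate $\norm{R_h y-\mathscr P_h y}_{\mathbb L^2}\leqslant ch^\varrho R$. Because $\varrho<3/2$, the quantity $A_2 y$ only lives in the negative-order space $\dot H^{\varrho-2,2}$, so every manipulation must be read through the extensions of $A_2$, $-\Delta_2$ and $\mathscr P_h$ and through the duality characterization $\norm{g_h}_{\dot H_h^{-2,2}}=\norm{R_hy-\mathscr P_hy}_{\mathbb L^2}$, and the $\mathbb L^2$ Stokes-projection error must be extracted by Aubin--Nitsche duality at the reduced regularity $\varrho$ rather than at full $H^2$. The second delicate point is the bookkeeping of the convolution split so that the non-integrable kernel $\tau^{-1}$ is cut exactly at $\tau=h^2$; this is what converts the naive loss of a full power of $h$ into the much milder factor $\ln\frac1h$, and it is the mechanism ultimately responsible for the logarithm appearing in \cref{thm:yR-yhR}.
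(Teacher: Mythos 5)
Your proposal is correct and, in all essentials, reproduces the paper's own proof: you identify $A_h^{-1}\mathscr P_h A_2 y$ with the discrete Stokes (Ritz) projection through the Galerkin identity, deduce $\norm{g_h}_{\dot H_h^{-2,2}} = \norm{R_hy-\mathscr P_hy}_{\mathbb L^2} \leqslant ch^\varrho R$ from the low-regularity $\mathbb L^2$-error of that projection combined with \cref{lem:scrPh}(iii), upgrade it by the inverse estimate \cref{eq:inverse}, split the convolution at the parabolic scale $h^2$ to extract $h^\varrho\ln\frac1h$, and obtain \cref{eq:phy-xih-Linfty,eq:phy-xih-H1q} by comparing with a stable discretely divergence-free interpolant and applying inverse inequalities. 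This is exactly the paper's Steps 1--2; the only cosmetic variation is that near the singularity the paper uses the uniform bound \cref{eq:I-Ah-Ph-A2} in $\dot H_h^{\varrho-2,2}$ with the smoothing kernel $(t-s)^{\varrho/2-1}$, where you use $\norm{g_h}_{\mathbb L^2}\leqslant ch^{\varrho-2}R$ against the plain boundedness of $S_h$; both yield the same $ch^\varrho$ contribution over an interval of length $h^2$.

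Two remarks. First, your arrangement of the split is the correct Thom\'ee-type bookkeeping — the $(t-s)^{-1}$ kernel must act on $s\in[0,t-h^2]$, away from the singularity — and in fact the paper's own display has the two integration ranges transposed (as printed, its integral $\int_{t\wedge h^2}^{t}(t-s)^{-1}\,\mathrm ds$ diverges), so your version is the intended one; note only that after the substitution $\tau=t-s$ your logarithmic integral should read $\int_{h^2}^{t}\tau^{-1}\,\mathrm d\tau\leqslant c\ln\frac1h$ rather than $\int_0^{t-h^2}\tau^{-1}\,\mathrm d\tau$, which diverges as written. Second, your primary choice of comparator $\mathcal P_h y$ in the last step is the one genuinely weak point: $\mathcal P_h$ is the $\mathbb L^2$-orthogonal projection onto the \emph{discretely divergence-free} subspace $\mathbb L_{h,\mathrm{div}}$, and its stability in $\mathbb L^\infty$ and in the $\mathbb L^{q_0}$-gradient seminorm is not the classical $L^2$-projection stability on quasi-uniform meshes — it is a nontrivial max-norm property of a discrete Leray-type projection and cannot simply be asserted. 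Your parenthetical fallback (any $\mathbb L_{h,\mathrm{div}}$-valued operator that is $\mathbb L^\infty$- and $W^{1,q_0}$-stable and $O(h^\varrho)$-accurate in $\mathbb L^2$) is precisely what the paper uses: the quasi-local divergence-preserving interpolation operator of Girault and Scott \cite{Girault2003}, together with the embeddings $\dot H^{\varrho,2}\hookrightarrow\mathbb L^\infty$ and $\dot H^{\varrho,2}\hookrightarrow\mathbb H^{1,2/(2-\varrho)}$; with that substitution your argument is complete.
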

\begin{proof}
We split the proof into the following two steps.

\textbf{Step 1.}
Let \( u \in \dot{H}^{\varrho,2} \) and define \( u_h := A_h^{-1}\mathscr{P}_h A_2 u \). By definition,
\[
\langle \nabla(u - u_h), \nabla v_h \rangle = 0, \quad \forall v_h \in \mathbb{L}_{h,\mathrm{div}}.
\]
Using \cite[Theorem~3.1]{Brezzi1991} and arguments from \cite[Theorems~1.1 and 1.2, Chapter II]{Girault1986}, we obtain
$ \norm{u - u_h}_{\mathbb{L}^2} \leqslant ch^{\varrho} \norm{u}_{\dot{H}^{\varrho,2}} $.
Since this holds for all \( u \in \dot{H}^{\varrho,2} \), it follows that
\[
\norm{I - A_h^{-1}\mathscr{P}_h A_2}_{\mathcal{L}(\dot{H}^{\varrho,2}, \mathbb{L}^2)} \leqslant ch^{\varrho}.
\]
Combining this with \cref{lem:scrPh}(iii) yields
\begin{equation}
\norm{\mathscr{P}_h A_2 - A_h \mathscr{P}_h}_{\mathcal{L}(\dot{H}^{\varrho,2}, \dot{H}_h^{-2,2})} \leqslant c h^{\varrho}. \label{eq:I-Ah-Ph-A}
\end{equation}
Together with the inverse estimate \cref{eq:inverse}, this implies
\begin{equation}
\norm{\mathscr{P}_h A_2 - A_h \mathscr{P}_h}_{\mathcal{L}(\dot{H}^{\varrho,2}, \dot{H}_h^{\varrho-2,2})} \leqslant c. \label{eq:I-Ah-Ph-A2}
\end{equation}

\textbf{Step 2.}
For any \( g \in L^\infty(0,T;\dot H^{\varrho,2}) \), following the argument in \cite[Theorem~3.7]{Thomee2006},
we deduce by \cref{lem:Sh}(i), \cref{eq:I-Ah-Ph-A}, and \cref{eq:I-Ah-Ph-A2} that
\begin{align*}
& \norm{S_h \ast (\mathscr{P}_hA_2g - A_h\mathscr{P}_hg)}_{C([0,T];\mathbb{L}^2)} \\
  \leqslant{} & 
  \sup_{t\in [0,T]} \bigg[
     \int_0^{t\wedge h^2}
   \norm{S_h(t-s) \big(\mathscr P_hA_2 - A_h\mathscr{P}_h\big)g(s)}_{\mathbb L^2} 
   \, \mathrm{d}s \\
  & \qquad\qquad\qquad {} + \int_{t\wedge h^2}^t
   \norm{S_h(t-s) \big(\mathscr P_hA_2 - A_h\mathscr{P}_h\big)g(s)}_{\mathbb L^2} 
   \, \mathrm{d}s
   \bigg] \\
  \leqslant{} & 
  c\sup_{t\in [0,T]}\bigg[ \int_0^{t\wedge h^2}
   (t-s)^{\varrho/2-1} \, \mathrm{d}s +
  \int_{t\wedge h^2}^t
  (t-s)^{-1} h^\varrho \, \mathrm{d}s\bigg]
  \times \norm{g}_{L^\infty(0,T;\dot H^{\varrho,2})} \\
  \leqslant{} &
  ch^\varrho \Big(\ln\frac1h\Big) \norm{g}_{L^\infty(0,T;\dot H^{\varrho,2})}.
\end{align*}
The estimate \cref{eq:y-Phy-xih-L2} follows by employing the above result,
Lemma \ref{lem:scrPh}(iii), and the definition of the stopping time \(t_{R,\varrho}\) in \eqref{eq:tR}.
Let \( \mathcal I_h: \mathcal H^{1,2} \to \mathbb L_{h,\mathrm{div}} \) be the quasi-local interpolation operator from \cite[Theorem~3.1]{Girault2003}. Using \cite[Theorem~3.1]{Girault2003}
and the embeddings
\[ \dot H^{\varrho,2} \hookrightarrow \mathbb H^{1,2/(2-\varrho)} \quad \text{and} \quad \dot H^{\varrho,2} \hookrightarrow \mathbb L^\infty, \]
we conclude that the following estimates hold almost surely on the event \( \{\norm{y_0}_{\dot H^{\varrho,2}} \leqslant R\} \):
\begin{align*}
  &\norm{(I - \mathcal I_h)y}_{C([0,t_{R,\varrho}];\mathbb L^2)} \leqslant ch^\varrho R, \\
  &\norm{\mathcal I_hy}_{C([0,t_{R,\varrho}];\mathbb L^\infty)} \leqslant cR, \\
  &\norm{\nabla\mathcal I_hy}_{C([0,t_{R,\varrho}];\mathbb L^{2/(2-\varrho)})} \leqslant cR.
\end{align*}
We also have the following standard inverse estimates
(see, e.g., \cite[Theorem~4.5.11]{Brenner2008}):
for any $ u_h \in \mathbb L_{h,\mathrm{div}} $,
\begin{align*}
  \norm{u_h}_{\mathbb L^\infty} &\leqslant ch^{-1}\norm{u_h}_{\mathbb L^2}, \\
  \norm{\nabla u_h}_{\mathbb L^{q_0}} &\leqslant ch^{2/q_0-2}\norm{u_h}_{\mathbb L^2}, \quad 2 < q_0 < 2/(2-\varrho).
\end{align*}
Combining these estimates with \cref{eq:y-Phy-xih-L2},
the stability estimates \cref{eq:phy-xih-Linfty,eq:phy-xih-H1q} follow by
straightforward computation. This concludes the proof.
\end{proof}

Finally, we are in a position to prove \cref{thm:yh-regu,thm:yR-yhR,thm:y-yh-global} as follows.

\medskip\noindent\textbf{Proof of \cref{thm:yh-regu}.}
For brevity, in what follows, when we state that a quantity is bounded,
we mean that it is uniformly bounded with respect to \( h \).
We divide the proof into the following three steps.

\textbf{Step 1.}
Using the initial condition \( y_0 \in L_{\mathcal{F}_0}^4(\Omega; \mathbb{L}_{\mathrm{div}}^2) \),
the growth property of $ F $ in \cref{lem:F}(ii),
and the property that \( \langle G(u_h), u_h \rangle = 0 \) for all \( u_h \in \mathbb{L}_{h,\mathrm{div}} \), we apply Itô's formula to establish that  
\[
\norm{y_h}_{L^4(\Omega \times (0,T); \mathbb{L}^2)} + \norm{y_h}_{L^2(\Omega \times (0,T); \dot{H}_h^{1,2})} \quad \text{is bounded.}
\]  
By using Hölder's inequality and the inequality  
$ \norm{u_h}_{\dot{H}_h^{1/2,2}} \leqslant
\norm{u_h}_{\mathbb{L}^2}^{1/2} \norm{u_h}_{\dot{H}_h^{1,2}}^{1/2} $
for all $ u_h \in \mathbb{L}_{h,\mathrm{div}} $,
it follows that
\[
  y_h \text{ is bounded in \( L^{8/3}(\Omega \times (0,T); \dot{H}_h^{1/2,2}) \)}.  
\]
Invoking Lemma \ref{lem:F}(iii), Lemma \ref{lem:Ph}(vi),
and the ideal property of the \(\gamma\)-radonifying
operators (see \cite[Theorem~9.1.10]{HytonenWeis2017}),
we deduce that \( \mathcal{P}_h F(y_h) \)
is bounded in \( L^{8/3}(\Omega \times (0,T); \gamma(H,\dot{H}_h^{\alpha,2})) \) for all \( 0 < \alpha < 1/2 \).  
By Lemma \ref{lem:Sh}(iii) and the \( h \)-uniform embedding  
\[
  (\dot{H}_h^{\alpha,2}, \dot{H}_h^{2+\alpha,2})_{1/8,8/3} \hookrightarrow \dot{H}_h^{0.7,2} 
  \quad\text{(valid for $ \alpha \text{ near } 1/2$)},
\]
justified via Propositions 1.3–1.4 and Theorem 4.36 in \cite{Lunardi2018}, we obtain that
\[
  S_h \diamond \mathcal{P}_h F(y_h) \text{ is bounded in }
  L^{8/3}(\Omega; C([0,T]; \dot{H}_h^{0.7,2})).
\]  
Furthermore, Lemma \ref{lem:Sh}(i), Lemma \ref{lem:Ph}(ii),
and the initial condition \( y_0 \in L_{\mathcal{F}_0}^1(\Omega; \dot{H}^{\varrho,2}) \) imply that
\[
z_h := S_h(\cdot) \mathcal{P}_h y_0 + S_h \diamond \mathcal{P}_h F(y_h)
\text{ is bounded in } L^1(\Omega; C([0,T]; \dot{H}_h^{0.7,2})).
\]  
Let \( \eta_h := y_h - z_h \). We have, almost surely,  
$ \frac{\mathrm{d}}{\mathrm{d}t} \eta_h(t) = -A_h \eta_h(t) - \mathcal{P}_h G(y_h(t)) $
for all $ t \in [0,T] $. Using the inequality  
\[
\norm{\mathcal{P}_h G(u_h)}_{\dot{H}_h^{-0.3,2}} \leqslant c \norm{u_h}_{\dot{H}_h^{0.7,2}} \norm{u_h}_{\dot{H}_h^{1,2}} \quad \text{for all } u_h \in \mathbb{L}_{h,\mathrm{div}},
\]  
which follows directly from Hölder's inequality,
the embeddings $\mathbb H^{0.7,2} \hookrightarrow \mathbb L^{20/3} $
and $ \mathbb H^{0.3,2} \hookrightarrow \mathbb L^{20/7} $,
and \cref{eq:1}, we follow the argument in Step 1 of the proof of \cref{prop:regu}
to deduce that
\begin{equation}
\label{eq:yh-regu-0}
\mathbb{P}\big( \norm{y_h}_{C([0,T]; \dot{H}_h^{0.7,2})} \leqslant R_h \big) \geqslant 1 - \frac{c}{\ln(1 + R_h)}, \quad \forall R_h > 1.
\end{equation}

\textbf{Step 2.}
We now establish that for any \(\alpha \in (1/2,1)\),
\begin{align}
  \norm{\mathcal P_hG(u_h)}_{\dot H_h^{2\alpha-2,2}}
  \leqslant c \norm{u_h}_{\dot H_h^{\alpha,2}}^2,
  \quad \forall u_h \in \mathbb L_{h,\mathrm{div}}.
  \label{eq:PhG}
\end{align}
To proceed, recall the standard discrete Sobolev inequality (cf.~\cref{rem:discrete_sobolev} for related details):
\begin{align}
  \norm{\nabla w_h}_{\mathbb L^{2/(2-\beta)}}
  \leqslant c \norm{w_h}_{\dot H_h^{\beta,2}},
  \quad \forall w_h \in \mathbb L_{h,\mathrm{div}},
  \, \forall \beta \in [1,2).
  \label{eq:discrete_sobolev}
\end{align}
Fix \( 1/2 < \alpha < 1 \). Using Hölder's inequality,
the embeddings \( \mathbb H^{\alpha,2}\hookrightarrow\mathbb L^{2/(1-\alpha)} \) and \( \mathbb H^{1-\alpha,2}\hookrightarrow\mathbb L^{2/\alpha} \), along with \cref{lem:P}, \cref{eq:1}, and \cref{eq:discrete_sobolev},
we derive for any \( u_h, v_h, w_h \in \mathbb L_{h,\mathrm{div}} \):
\begin{align*}
  & \dual{(\mathcal Pu_h\cdot\nabla) v_h, \, w_h} =
  \dual{(\mathcal Pu_h\cdot\nabla) w_h, \, v_h} \leqslant
  c\norm{u_h}_{\dot H_h^{\alpha,2}} \norm{v_h}_{\dot H_h^{1-\alpha,2}} \norm{w_h}_{\dot H_h^{1,2}}, \\
  & \dual{(\mathcal Pu_h\cdot\nabla) v_h, \, w_h} \leqslant c\norm{u_h}_{\dot H_h^{\alpha,2}} \norm{v_h}_{\dot H_h^{2-\alpha,2}} \norm{w_h}_{\dot H_h^{0,2}}.
\end{align*}
Consequently, interpolation (\cite[Theorem~2.6]{Lunardi2018}) yields
\begin{align}
  \dual{(\mathcal Pu_h\cdot\nabla) u_h, \, v_h}
  \leqslant c\norm{u_h}_{\dot H_h^{\alpha,2}}^2
  \norm{v_h}_{\dot H_h^{2-2\alpha,2}},
  \quad \forall u_h, v_h \in \mathbb L_{h,\mathrm{div}}.
  \label{eq:lxy-1}
\end{align}
For any \( u_h, v_h \in \mathbb{L}_{h,\mathrm{div}} \), the following estimate holds:
\begin{align}  
   \dual{\big[(u_h - \mathcal{P}u_h) \cdot \nabla\big] u_h, \, v_h} 
  &\stackrel{\mathrm{(i)}}{\leqslant} 
  c \norm{u_h - \mathcal{P}u_h}_{\mathbb{L}^2}  
  \norm{u_h}_{\dot{H}_h^{2\alpha,2}}  
  \norm{v_h}_{\mathbb{H}^{2-2\alpha,2}} \notag \\  
  &\stackrel{\mathrm{(ii)}}{\leqslant} 
  c \norm{u_h}_{\dot{H}_h^{\alpha,2}}^2 \norm{v_h}_{\dot{H}_h^{2-2\alpha,2}},  
  \label{eq:lxy-2}  
\end{align}  
where (i) follows from Hölder's inequality,
the embedding $ \mathbb H^{2-2\alpha,2} \hookrightarrow \mathbb L^{2/(2\alpha-1)} $,
and assertion \cref{eq:discrete_sobolev}, and (ii) is a consequence of \cref{lem:Ph}(v), \cref{eq:inverse}, and \cref{eq:1}.
Similarly, for any \( u_h, v_h \in \mathbb{L}_{h,\mathrm{div}} \), we have  
\begin{align}  
  \dual{(\nabla \cdot u_h) u_h, \, v_h}
   & = 
   \dual{\nabla \cdot u_h, \, u_h \cdot v_h} \notag \\  
   &\stackrel{\mathrm{(i)}}{=}{} 
   \dual{\nabla \cdot u_h, \, u_h \cdot v_h - \Pi_h(u_h \cdot v_h)} \notag \\  
   &\stackrel{\mathrm{(ii)}}{\leqslant}
   c h \norm{\nabla u_h}_{\mathbb{L}^2} \norm{\nabla(u_h \cdot v_h)}_{\mathbb{L}^2} \notag \\  
   &\stackrel{\mathrm{(iii)}}{\leqslant}
   c h \norm{\nabla u_h}_{\mathbb{L}^2} 
   \big(
     \norm{u_h}_{\dot{H}_h^{2\alpha,2}} \norm{v_h}_{\dot{H}_h^{2-2\alpha,2}}
     + \norm{u_h}_{\dot{H}_h^{\alpha,2}} \norm{v_h}_{\dot{H}_h^{2-\alpha,2}}
   \big) \notag \\  
   &\stackrel{\mathrm{(iv)}}{\leqslant}
   c \norm{u_h}_{\dot{H}_h^{\alpha,2}}^2  
   \norm{v_h}_{\dot{H}_h^{2-2\alpha,2}},  
   \label{eq:lxy-3}  
\end{align}
where in (i), \( \Pi_h(u_h \cdot v_h) \) denotes the \( L^2 \)-orthogonal projection of \( u_h \cdot v_h \) onto \( M_h \); (ii) relies on the standard approximation estimate  
$ \norm{u_h \cdot v_h - \Pi_h(u_h \cdot v_h)}_{L^2} \leqslant c h \norm{\nabla(u_h \cdot v_h)}_{\mathbb{L}^2}  $;
(iii) follows from Hölder's inequality, Sobolev's embedding theorem, \cref{eq:1}, and \cref{eq:discrete_sobolev}; and (iv) is derived from the inverse estimate \cref{eq:inverse}.
For any \( u_h \in \mathbb L_{h,\mathrm{div}} \), since \cref{eq:G-def} implies  
\[
G(u_h) = (\mathcal Pu_h\cdot\nabla)u_h  
+ \big[(u_h-\mathcal Pu_h)\cdot\nabla\big]u_h  
+ \frac12 (\nabla\cdot u_h)u_h,  
\]  
combining \cref{eq:lxy-1,eq:lxy-2,eq:lxy-3} yields  
\[
\dual{\mathcal P_hG(u_h), \, v_h} \leqslant c\norm{u_h}_{\dot H_h^{\alpha,2}}^2  
\norm{v_h}_{\dot H_h^{2-2\alpha,2}},  
\quad \forall v_h \in \mathbb L_{h,\mathrm{div}}.  
\]  
This establishes \cref{eq:PhG}.

\textbf{Step 3.}
By the stability properties of \(\mathcal{P}_h\) (Lemma \ref{lem:Ph}(ii,vi)) and the properties of \(S_h\) (Lemma \ref{lem:Sh}), the inequality \eqref{eq:yh-regu} follows directly from \eqref{eq:yh-regu-0} and \eqref{eq:PhG} via the methodology in Step 2 of Proposition \ref{prop:regu}. This completes the proof of Theorem \ref{thm:yh-regu}.

\hfill$\blacksquare$

\medskip\noindent\textbf{Proof of \cref{thm:yR-yhR}.}
We split the proof into the following several steps.

\textbf{Step 1.}
Let \( e_h := y_h - \mathscr P_hy \) and introduce the decomposition \( e_{h} = \xi_{h} + \eta_h \), where
\[
  \xi_h := S_h \ast (\mathscr P_hA_2y - A_h\mathscr P_hy).
\]
Given that $ \xi_{h}'(t) = -A_h\xi_{h}(t) + \mathscr{P}_hA_2y(t) - A_h\mathscr{P}_hy(t) $
almost surely for all $ t \in [0,T] $,
a direct computation yields
\begin{align*}
  \mathrm{d}\eta_{h,R}(t)
    &= \mathbbm{1}_{[0,t_{R,\varrho}]}(t) \Big\{ -A_h\eta_{h}(t) \, \mathrm{d}t
      + \big[\mathscr P_h\mathcal{P}G(y(t)) - \mathcal P_hG(y_h(t))\big]  \, \mathrm{d}t \\
    & \qquad\qquad\qquad\quad {}
  + \big[ \mathcal P_hF(y_h(t)) - \mathscr P_h\mathcal PF(y(t)) \big]\,\mathrm{d}W_H(t) \Big\},
  \quad t \in [0,T],
\end{align*}
where $ \eta_{h,R}(t) := \eta_h(t \wedge t_{R,\varrho}) $ for all $ t \in [0,T] $.
Applying Itô's formula to \( \|\eta_{h,R}(t)\|_{\mathbb{L}^2}^p\),
we infer the following equality for all \( t \in [0,T] \):
\begin{align*}
  \mathbb{E}\|\eta_{h,R}(t)\|_{\mathbb{L}^2}^p
    &= \mathbb E\norm{(\mathcal P_h-\mathscr P_h)y_0}_{\mathbb L^2}^p
    + p \, \mathbb{E} \left[
      \int_0^t \|\eta_{h,R}(s)\|_{\mathbb{L}^2}^{p-2}
      \big( I_1(s) + \cdots + I_5(s) \big) \, \mathrm{d}s
    \right],
\end{align*}
where
\begin{align*}
  I_1(s) &:= \mathbbm{1}_{[0,t_{R,\varrho}]}(s) \big\langle \eta_{h,R}(s), \, -A_h\eta_{h}(s) \big\rangle, \\
  I_2(s) &:= \mathbbm{1}_{[0,t_{R,\varrho}]}(s) \big\langle \eta_{h,R}(s), \, \big(\mathscr P_h\mathcal{P} - I \big) G(y(s)) \big\rangle, \\
  I_3(s) &:= \mathbbm{1}_{[0,t_{R,\varrho}]}(s) \big\langle \eta_{h,R}(s), \, G(y(s)) - G(y_{h}(s)) \big\rangle, \\
  I_4(s) &:= \frac{1}{2} \mathbbm{1}_{[0,t_{R,\varrho}]}(s)
  \left\lVert \mathcal P_hF(y_{h}(s)) - \mathscr P_h\mathcal PF(y(s)) \right\rVert_{\gamma(H,\mathbb{L}^2)}^2, \\
  I_5(s) &:=  \frac{p-2}{2}\mathbbm{1}_{[0,t_{R,\varrho}]}(s) \|\eta_{h,R}(s)\|_{\mathbb{L}^2}^{-2}
  \left\lVert \bigl\langle \eta_{h,R}(s), \, \mathcal P_hF(y_h(s)) - \mathscr P_h\mathcal PF(y(s)) \bigr\rangle \right\rVert_{\gamma(H,\mathbb{R})}^2.
\end{align*}

\textbf{Step 2.}
We now estimate the terms \(I_1\) and \(I_2\).  
For \(I_1\), the definition of \(A_h\) implies  
\begin{equation}
  \label{eq:I1-bound}
  I_1(s) = -\mathbbm{1}_{[0,t_{R,\varrho}]}(s) \|\nabla \eta_{h,R}(s)\|^2_{\mathbb{L}^2}.
\end{equation}
To bound \(I_2\), we use \cref{lem:scrPh}(i), which gives  
\[
  I_2(s) \leqslant ch^\varrho \mathbbm{1}_{[0,t_{R,\varrho}]}(s) \|\nabla \eta_{h,R}(s)\|_{\mathbb{L}^2} \|G(y(s))\|_{\mathbb H^{\varrho-1,2}}.
\]
Given the incompressibility condition \( \nabla \cdot y(s) = 0 \), we have
\begin{align*}
  \|G(y(s))\|_{\mathbb H^{\varrho-1,2}} = \left\lVert\big[y(s) \cdot \nabla\big] y(s)\right\rVert_{\mathbb H^{\varrho-1,2}}.
\end{align*}
Considering the embeddings \( \dot{H}^{\varrho,2} \hookrightarrow \mathbb{L}^\infty \),
\( \dot H^{\varrho,2} \hookrightarrow W^{1,4/(3-\varrho)}(\mathcal O;\mathbb R^2) \),
and \( \dot H^{2,2} \hookrightarrow W^{1,4/(1+\varrho)}(\mathcal O;\mathbb R^2) \),
we can use the standard interpolation theory (cf.~Theorem 2.6 in \cite{Lunardi2018})
to deduce that
\begin{align*}
  \left\lVert\big[y(s)\cdot\nabla\big]y(s)\right\rVert_{\mathbb H^{\varrho-1,2}} \leqslant c \|y(s)\|_{\dot{H}^{\varrho,2}}^2.
\end{align*}
By the definition of $ t_{R,\varrho} $ in \cref{eq:tR}, it follows that
\begin{align*}
  \mathbbm{1}_{[0,t_{R,\varrho}]}(s)
  \left\lVert\big[y(s)\cdot\nabla\big]y(s)\right\rVert_{\mathbb H^{\varrho-1,2}}
  \leqslant c R^2,
  \quad\text{$\mathrm{d}\mathbb P\otimes\mathrm{d}s$-a.e.}
\end{align*}
Thus, we obtain the estimate
\[
  I_2(s) \leqslant ch^\varrho R^2\mathbbm{1}_{[0,t_{R,\varrho}]}(s) \|\nabla\eta_{h,R}(s)\|_{\mathbb{L}^2},
  \quad\text{$\mathrm{d}\mathbb P\otimes\mathrm{d}s$-a.e.}
\]
Applying Young's inequality, we further obtain
\begin{equation}
  I_2(s) \leqslant \mathbbm{1}_{[0,t_{R,\varrho}]}(s) \left(
    \frac{1}{4} \|\nabla \eta_{h,R}(s)\|_{\mathbb{L}^2}^2
    + c h^{2\varrho} R^4
  \right),
  \quad \text{$\mathrm{d}\mathbb P\otimes\mathrm{d}s$-a.e.}
  \label{eq:I2-bound}
\end{equation}

\textbf{Step 3.}
We now estimate the term \(I_3\). Using the definition of \(G\) in \cref{eq:G-def}, the incompressibility condition \(\nabla \cdot y(s) = 0\), and the decomposition
\(y_h = \mathscr{P}_h y + \xi_h + \eta_h\), integration by parts yields  
\[
\begin{aligned}
I_3(s) &= -\mathbbm{1}_{[0,t_{R,\varrho}]}(s) \left\langle (y-y_h)(s), \, \big(y(s)\cdot\nabla\big)\eta_{h,R}(s) \right\rangle \\
&\quad + \frac{1}{2}\mathbbm{1}_{[0,t_{R,\varrho}]}(s) \left\langle \eta_{h,R}(s), \, \big((y - y_h)(s) \cdot \nabla\big)(\mathscr{P}_h y + \xi_h)(s) \right\rangle \\
&\quad + \frac{1}{2} \mathbbm{1}_{[0,t_{R,\varrho}]}(s) \left\langle \big((y_h-y)(s)\cdot\nabla\big)\eta_{h,R}(s), \, (\mathscr{P}_h y + \xi_h)(s) \right\rangle \\
&=: J_1(s) + J_2(s) + J_3(s).
\end{aligned}
\]  
For \(J_1\), Hölder's inequality and the embedding \(\dot H^{\varrho,2} \hookrightarrow \mathbb L^\infty\) imply  
\[
J_1(s) \leqslant cR\mathbbm{1}_{[0,t_{R,\varrho}]}(s) \|\nabla \eta_{h,R}(s)\|_{\mathbb{L}^2} \|(y-y_h)(s)\|_{\mathbb L^2}, \quad \text{$\mathrm{d}\mathbb P\otimes\mathrm{d}s$-a.e.}
\]  
For \(J_2\) and \(J_3\), Hölder's inequality, the stability estimates for \(\mathscr{P}_h y + \xi_h\) in \cref{eq:phy-xih-Linfty,eq:phy-xih-H1q}, and the embedding \(\dot H^{1,2} \hookrightarrow \mathbb L^q\) for \(q \in (2,\infty)\) yield  
\[
J_2(s) + J_3(s) \leqslant cR\mathbbm{1}_{[0,t_{R,\varrho}]}(s) \|\nabla\eta_{h,R}(s)\|_{\mathbb L^2} \|(y-y_h)(s)\|_{\mathbb L^2}, \quad \text{$\mathrm{d}\mathbb P\otimes\mathrm{d}s$-a.e.}
\]  
Combining these bounds, we conclude  
\begin{equation}
\label{eq:I3-bound}
I_3(s) \leqslant \mathbbm{1}_{[0,t_{R,\varrho}]}(s) \Bigl( \frac{1}{4} \|\nabla \eta_{h,R}(s)\|_{\mathbb{L}^2}^2 + c R^2 \|(y - y_h)(s)\|_{\mathbb{L}^2}^2 \Bigr), \quad \text{$\mathrm{d}\mathbb P\otimes\mathrm{d}s$-a.e.}
\end{equation}

\textbf{Step 4.}
We now estimate the sum \(I_4 + I_5\). Observe that  
\[
\begin{aligned}
I_4(s) + I_5(s) &\leqslant c \, \mathbbm{1}_{[0,t_{R,\varrho}]}(s) \left\lVert \mathcal{P}_h(F(y_h(s)) - F(y(s))) \right\rVert_{\gamma(H,\mathbb{L}^2)}^2 \\
&\quad + c \, \mathbbm{1}_{[0,t_{R,\varrho}]}(s) \left\lVert \mathcal{P}_h(F(y(s)) - \mathcal{P}F(y(s))) \right\rVert_{\gamma(H,\mathbb{L}^2)}^2 \\
&\quad + c \, \mathbbm{1}_{[0,t_{R,\varrho}]}(s) \left\lVert (\mathcal{P}_h - \mathscr{P}_h)\mathcal{P}F(y(s)) \right\rVert_{\gamma(H,\mathbb{L}^2)}^2 \\
&=: K^{(1)}(s) + K^{(2)}(s) + K^{(3)}(s).
\end{aligned}
\]  
For \(K^{(1)}(s)\),
using the Lipschitz continuity of $F$ in $\gamma(H,\mathbb{L}^2)$
(cf.~Lemma \ref{lem:F}(i)) and the ideal property (Theorem 9.1.10 in \cite{HytonenWeis2017}),
we get
\[
K^{(1)}(s) \leqslant c \, \mathbbm{1}_{[0,t_{R,\varrho}]}(s) \|(y - y_h)(s)\|_{\mathbb{L}^2}^2.
\]  
For \(K^{(2)}(s)\), using \cref{eq:919}, \cref{lem:F}(iv),
and the ideal property, we derive  
\[
\begin{aligned}
K^{(2)}(s) &\leqslant c h^{2\varrho} \, \mathbbm{1}_{[0,t_{R,\varrho}]}(s) \|F(y(s))\|_{\gamma(H,\mathbb{H}^{\varrho,2})}^2 \\
&\leqslant c h^{2\varrho} R^{2\varrho} \, \mathbbm{1}_{[0,t_{R,\varrho}]}(s), \quad \text{$\mathrm{d}\mathbb{P} \otimes \mathrm{d}s$-a.e.}
\end{aligned}
\]  
Similarly, for \(K^{(3)}(s)\), by \cref{lem:scrPh}(ii), \cref{lem:P},
and the ideal property, we have  
\[
\begin{aligned}
K^{(3)}(s) &\leqslant c h^{2\varrho} \, \mathbbm{1}_{[0,t_{R,\varrho}]}(s) \|F(y(s))\|_{\gamma(H,\mathbb{H}^{\varrho,2})}^2 \\
&\leqslant c h^{2\varrho} R^{2\varrho} \, \mathbbm{1}_{[0,t_{R,\varrho}]}(s), \quad \text{$\mathrm{d}\mathbb{P} \otimes \mathrm{d}s$-a.e.}
\end{aligned}
\]  
Combining these results, we conclude  
\begin{equation}
  \label{eq:I4-bound}
  I_4(s) + I_5(s) \leqslant c \, \mathbbm{1}_{[0,t_{R,\varrho}]}(s) \big( \|(y - y_h)(s)\|_{\mathbb{L}^2}^2 + h^{2\varrho} R^{2\varrho} \big), \quad \text{$\mathrm{d}\mathbb{P} \otimes \mathrm{d}s$-a.e.}
\end{equation}


\textbf{Step 5.} 
Integrating the estimates for \(I_1, \ldots, I_5\) from
\cref{eq:I1-bound,eq:I2-bound,eq:I3-bound,eq:I4-bound},
along with the inequality 
\begin{equation}
  \label{eq:y0-err}
  \mathbb E\norm{(\mathcal P_h-\mathscr P_h)y_0}_{\mathbb L^2}^p
  \leqslant ch^{\varrho p},
\end{equation}
which is justified by \cref{lem:scrPh}(ii) and the initial condition $ y_0 \in L_{\mathcal F_0}^p(\Omega;\dot H^{\varrho,2}) $,
and applying Young's inequality, we conclude that for every \(t \in [0, T]\),  
\[
\begin{aligned}
& \mathbb{E} \|\eta_{h,R}(t)\|_{\mathbb{L}^2}^p + \mathbb{E} \bigg[ \int_0^t \mathbbm{1}_{[0,t_{R,\varrho}]}(s) \|\eta_{h,R}(s)\|_{\mathbb{L}^2}^{p-2} \|\nabla\eta_{h,R}(s)\|_{\mathbb{L}^2}^2 \, \mathrm{d}s \bigg] \\
\leqslant{}
&cR^{p+2}h^{\varrho p} + cR^2\mathbb{E} \bigg[ \int_0^t \mathbbm{1}_{[0,t_{R,\varrho}]}(s) \Big( \|\eta_{h,R}(s)\|_{\mathbb{L}^2}^p + \|(y-y_h)(s)\|_{\mathbb{L}^2}^p \Big) \, \mathrm{d}s \bigg].
\end{aligned}
\]  
Using the decomposition \(y - y_h = y - \mathscr{P}_h y - \xi_h - \eta_h\) and the inequality \cref{eq:y-Phy-xih-L2}, we infer that for any \(t \in [0,T]\),  
\[
\begin{aligned}
& \mathbb{E} \|\eta_{h,R}(t)\|_{\mathbb{L}^2}^p + \mathbb{E} \bigg[ \int_0^t \mathbbm{1}_{[0,t_{R,\varrho}]}(s) \|\eta_{h,R}(s)\|_{\mathbb{L}^2}^{p-2} \|\nabla\eta_{h,R}(s)\|_{\mathbb{L}^2}^2 \, \mathrm{d}s \bigg] \\
&\leqslant cR^{p+2}h^{\varrho p} \big(\ln\frac{1}{h}\big)^p + cR^{2} \, \mathbb{E} \left[ \int_0^t \mathbbm{1}_{[0,t_{R,\varrho}]}(s) \|\eta_{h,R}(s)\|_{\mathbb{L}^2}^p \, \mathrm{d}s \right].
\end{aligned}
\]  
Applying Gronwall's inequality, we obtain the bound  
\begin{equation}
\label{eq:etah-C}
\|\eta_{h,R}\|_{C([0,T];L^p(\Omega;\mathbb{L}^2))} + \left\lVert \|\eta_{h}\|_{\mathbb{L}^2}^{1-\frac{2}{p}} \|\nabla\eta_{h}\|_{\mathbb{L}^2}^{\frac{2}{p}} \right\rVert_{L^p(\Omega\times(0,t_{R,\varrho}))} \leqslant c h^{\varrho} \ln\frac{1}{h} \exp(c R^2).
\end{equation}


  \textbf{Step 6.}
  Applying Itô's formula to
  \( \frac{1}{2}\|\eta_h(\cdot \wedge t_{R,\varrho})\|_{\mathbb L^2}^2 \),
  we obtain almost surely that for any \( t \in [0, T] \):
  \begin{align*}
    \frac{1}{2}\|\eta_h(t \wedge t_{R,\varrho})\|_{\mathbb{L}^2}^2
  &= \frac12\norm{(\mathcal P_h-\mathscr P_h)y_0}_{\mathbb L^2}^2
  + \overbrace{\int_0^t (I_1(s) + I_2(s) + I_3(s) + I_4(s)) \, \mathrm{d}s}^{II_1(t)} \\
  &\quad {} + \overbrace{\int_0^t \mathbbm{1}_{[0,t_{R,\varrho}]}(s) \left\langle \eta_h(s), \, \mathcal P_hF(y_h(s)) - \mathscr{P}_h\mathcal PF(y(s)) \right\rangle \, \mathrm{d}W_H(s)}^{II_2(t)}.
  \end{align*}
  For \( II_1 \), using the bounds from \cref{eq:I1-bound,eq:I2-bound,eq:I3-bound,eq:I4-bound},
  we deduce
  \begin{align*}
    \mathbb{E} \|II_1\|_{C([0, T])}^{p/2}
    \leqslant cR^{2p} h^{\varrho p} + c R^p \left\lVert y-y_h\right\rVert_{L^p(\Omega\times(0,t_{R,\varrho});\mathbb{L}^2)}^p.
  \end{align*}
  For \( II_2 \), an argument analogous to the one in \cref{eq:I4-bound} yields
  \begin{align*}
    & \mathbbm{1}_{[0,t_{R,\varrho}]}(s)
    \left\lVert \left\langle \eta_h(s), \, \mathcal P_hF(y_h(s)) -\mathscr P_h \mathcal{P}F(y(s)) \right\rangle \right\rVert_{\gamma(H,\mathbb{R})}^2 \\
    \leqslant{}
    & c \|\eta_h(s)\|_{\mathbb{L}^2}^2 \big( \| (y - y_h)(s) \|_{\mathbb{L}^2}^2 + h^{2\varrho}R^{2\varrho} \big),
    \quad \text{$\mathrm{d}\mathbb P\otimes\mathrm{d}s$-a.e.}
  \end{align*}
  Hence, using the Burkholder-Davis-Gundy inequality and Hölder's inequality, we obtain
  \begin{align*}
    \mathbb{E} \|II_2\|_{C([0, T])}^{p/2}
    \leqslant cR^{\varrho p}h^{\varrho p} + c\norm{\eta_h}_{L^p(\Omega\times(0,t_{R,\varrho});\mathbb L^2)}^p
    + c \norm{y-y_h}_{L^p(\Omega\times(0,t_{R,\varrho});\mathbb L^2)}^p.
  \end{align*}
  Combining the estimates for \( II_1 \) and \( II_2 \),
  along with \cref{eq:y0-err}, we obtain
  \begin{align*}
    \norm{\eta_h(\cdot\wedge t_{R,\varrho})}_{L^p(\Omega;C([0,T];\mathbb L^2))}
    \leqslant cR^{2}h^{\varrho} + c\norm{\eta_h}_{L^p(\Omega\times(0,t_{R,\varrho});\mathbb L^2)}
    + cR \norm{y-y_h}_{L^p(\Omega\times(0,t_{R,\varrho});\mathbb L^2)}.
  \end{align*}
Combining this inequality with \cref{eq:etah-C,eq:y-Phy-xih-L2},
and using the identity \( y - y_h = y - \mathscr{P}_h y - \xi_h - \eta_h \),
we obtain the desired error estimate \cref{eq:yR-yhR}.
This completes the proof of \cref{thm:yR-yhR}.

\hfill$\blacksquare$

\medskip\noindent
\textbf{Proof of \cref{thm:y-yh-global}.}
Given that \( \langle G(u_h), u_h \rangle = 0 \) for all \( u_h \in \mathbb{L}_{h,\mathrm{div}} \), and considering \( \|\mathcal{P}_hF(u_h)\|_{\gamma(H,\dot{H}_h^{0,2})} \leqslant c(1 + \|u_h\|_{\mathbb{L}^2}) \) for all \( u_h \in \mathbb{L}_{h,\mathrm{div}} \)
(as justified by Lemma \ref{lem:F}(ii) and the ideal property \cite[Theorem~9.1.10]{HytonenWeis2017}), an application of Itô's formula, combined with the Burkholder-Davis-Gundy inequality, leads to the uniform boundedness of \( \|y_h\|_{L^4(\Omega; C([0,T]; \mathbb{L}^2))} \) with respect to \( h \). This, together with (\ref{eq:y-L4-C-L2}), implies the uniform boundedness of \( \|y - y_h\|_{L^4(\Omega; C([0,T]; \mathbb{L}^2))} \). Furthermore, we derive the estimate:
\begin{align*}
    \|y - y_h\|_{L^2(\Omega; C([0,T]; \mathbb{L}^2))} 
    &\leqslant \|y - y_h\|_{L^2(\{t_{R,\varrho} = T\}; C([0,T]; \mathbb{L}^2))} \\
    &\quad + \left( \mathbb{P}(t_{R,\varrho}<T) \right)^{1/4} \|y - y_h\|_{L^4(\Omega; C([0,T]; \mathbb{L}^2))},
\end{align*}
where $ t_{R,\varrho} $ is defined by \cref{eq:tR} with $ R > 1 $.
By (\ref{eq:P-tRrho}) and (\ref{eq:yR-yhR}), there exist constants \( c_0, c_1, c_2 > 0 \), independent of \( h \) and \( R \), such that:
\begin{align*}
  \|y - y_h\|_{L^2(\Omega; C([0,T]; \mathbb{L}^2))}
  \leqslant c_0 h^{\varrho}\ln\frac{1}{h} \exp(c_1 R^2) + \frac{c_2}{\left(\ln(1+R)\right)^{1/4}}.
\end{align*}
Selecting 
\[
  R = \sqrt{ \frac{\varrho\ln\frac{1}{h} - \ln\left(\ln\frac{1}{h}\right) - \frac{1}{4}\ln\left(\ln\left(\ln\frac{1}{h}\right)\right)}{c_1} },
\]
and choosing \( h_0 \in (0,\exp(-3)) \) sufficiently small, we obtain the error estimate (\ref{eq:y-yh-global}) for all \( 0 < h < h_0 \). This completes the proof of Theorem \ref{thm:y-yh-global}.

\hfill$\blacksquare$

  \section{Full Discretization}
  \label{sec:full-discretization}
  Let $J$ be a positive integer and denote the time step by $\tau := T/J$.
  This section is devoted to analyzing the following full discretization:
  \begin{equation}
    \label{eq:Y}
    \begin{cases}
      \displaystyle
      Y_{j+1} - Y_j = -\tau \big(A_h Y_{j+1} + \mathcal P_h G(Y_{j+1})\big)
       + \mathcal P_h \int_{{j\tau}}^{{j\tau+\tau}} F(Y_j) \, \mathrm{d}W_H(t), \, 0 \leqslant j < J, \\
      Y_0 = \mathcal P_h y_0,
    \end{cases}
  \end{equation}
  where $ G $ is defined by \cref{eq:G-def}.
  For a thorough discussion of the solvability and stability properties of this discretization, we refer the reader to Section 3 of \cite{Prohl2013}.

  We begin our analysis by establishing the pathwise uniform convergence of the full discretization
  \cref{eq:Y} to the spatial semidiscretization \cref{eq:yh} on a specified local probability set.
  The following theorem formalizes this result.

  \begin{theorem}
    \label{thm:yhR-YR}
    Let \( y_0 \in L_{\mathcal{F}_0}^2(\Omega;\mathbb{L}_{\mathrm{div}}^2) \cap L_{\mathcal{F}_0}^1(\Omega;\dot{H}^{\varrho,2}) \) for some \( \varrho \in (1, 3/2) \), and let \( h, \tau \in (0, 1/2) \). Suppose \( y_h \) is the strong solution of the spatial semidiscretization \cref{eq:yh}, and \( (Y_j)_{j=0}^J \) is the solution to the full discretization \cref{eq:Y}. For any \( R_h, R_{h,\tau} > 1 \), define the local probability set  
    \begin{equation}
      \label{eq:local_sample_set}
      \Omega_{\varrho,R_h,R_{h,\tau}}^{h,\tau} := \left\{
        \omega \in \Omega : \, \norm{y_h}_{C([0,T];\dot{H}_h^{\varrho,2})} \leqslant R_h, \,
        \max_{0 \leqslant j \leqslant J} \|Y_j\|_{\mathbb{L}^2} \leqslant R_{h,\tau}
      \right\}.
    \end{equation}
    Then, there exists a constant \( c > 0 \), independent of \( h \), \( \tau \), \( R_h \), and \( R_{h,\tau} \), such that  
    \begin{equation}
      \label{eq:local-conv}
      \mathbb{E} \left[
        \mathbbm{1}_{\Omega_{\varrho,R_h,R_{h,\tau}}^{h,\tau}}
        \max_{1 \leqslant j \leqslant J} \norm{y_h(j\tau) - Y_j}_{\mathbb{L}^2}^2
      \right] \leqslant c\tau \exp(cR_h^2) \left( 1 + \ln\left(\frac{1}{h}\right) R_{h,\tau}^2 \right).
    \end{equation}
  \end{theorem}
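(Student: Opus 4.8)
The plan is to propagate the $\mathbb{L}^2$-error $e_j:=y_h(j\tau)-Y_j$ (with $e_0=0$, since $Y_0=\mathcal P_hy_0=y_h(0)$) through a discrete energy argument and to close it with a discrete Gronwall inequality. First I would subtract \cref{eq:Y} from \cref{eq:yh} integrated over $I_j:=[j\tau,(j+1)\tau]$, writing the Stokes consistency in terms of the temporal increment $\delta_j(s):=y_h((j+1)\tau)-y_h(s)$, to obtain
\[
e_{j+1}-e_j = -\tau A_h e_{j+1} + \int_{I_j}A_h\delta_j(s)\,\mathrm{d}s - \int_{I_j}\mathcal{P}_h\big(G(y_h(s))-G(Y_{j+1})\big)\,\mathrm{d}s + \Delta M_j,
\]
where $\Delta M_j:=\int_{I_j}\mathcal{P}_h\big(F(y_h(s))-F(Y_j)\big)\,\mathrm{d}W_H(s)$. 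Testing with $2e_{j+1}$, using $2\dual{a-b,a}=\norm{a}^2-\norm{b}^2+\norm{a-b}^2$ and the definition of $A_h$, the implicit term yields the dissipation $-2\tau\norm{\nabla e_{j+1}}_{\mathbb{L}^2}^2$; substituting $e_{j+1}=e_j+(e_{j+1}-e_j)$ in the stochastic pairing isolates the martingale increment $2\dual{e_j,\Delta M_j}$ (mean zero given $\mathcal F_{j\tau}$) together with the Itô correction $\norm{\Delta M_j}_{\mathbb{L}^2}^2$. Summing gives a bound on $\norm{e_m}_{\mathbb{L}^2}^2+2\tau\sum_{j<m}\norm{\nabla e_{j+1}}^2$ in terms of a Stokes consistency term, a nonlinear term, the martingale sum, and the Itô correction.

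Since the set in \cref{eq:local_sample_set} is not adapted, I would replace the processes by stopped versions governed by the stopping time $\theta:=\inf\{t:\norm{y_h(t)}_{\dot{H}_h^{\varrho,2}}\ge R_h\}\wedge T$ and the discrete exit index $\ell:=\min\{j:\norm{Y_j}_{\mathbb{L}^2}>R_{h,\tau}\}$, so that on $\Omega_{\varrho,R_h,R_{h,\tau}}^{h,\tau}$ one has $\theta=T$, $\ell=J$, and the stopped and original quantities coincide. Inserting $\mathbbm{1}_{\{j+1\le\ell\}}\mathbbm{1}_{[0,\theta]}$ keeps the martingale sum a genuine martingale, legitimizing the Burkholder--Davis--Gundy estimate and furnishing, uniformly on the stopped set, the bounds $\norm{y_h}_{C([0,\theta];\dot{H}_h^{\varrho,2})}\le R_h$ (hence control of $\norm{y_h}_{\dot{H}_h^{1,2}}$ and $\norm{y_h}_{\mathbb{L}^\infty}$ via \cref{eq:1} and $\dot{H}^{\varrho,2}\hookrightarrow\mathbb{L}^\infty$) and $\norm{Y_j}_{\mathbb{L}^2}\le R_{h,\tau}$. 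For the nonlinear term I would use the skew-symmetry of the trilinear form $b(u,v,w):=\dual{(u\cdot\nabla)v,w}+\tfrac12\dual{(\nabla\cdot u)v,w}$, namely $b(u,v,w)=-b(u,w,v)$ for zero-trace arguments, together with $Y_{j+1}=y_h((j+1)\tau)-e_{j+1}$: this collapses the dominant contribution to $-b(e_{j+1},y_h((j+1)\tau),e_{j+1})$, bounded by $cR_h\norm{e_{j+1}}_{\mathbb{L}^2}\norm{\nabla e_{j+1}}_{\mathbb{L}^2}$ through Ladyzhenskaya's inequality, feeding the factor $\exp(cR_h^2)$ after Young's inequality; the leftover temporal part is absorbed using $\delta_j$.

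The main obstacle will be the temporal consistency terms, which carry the half-order-in-time rate. The increments have only the noise-limited regularity $\mathbb{E}\norm{\delta_j(s)}_{\mathbb{L}^2}^2\le c\tau$, so transferring a full gradient onto $\delta_j$ in $\dual{\nabla e_{j+1},\nabla\int_{I_j}\delta_j}$ and forcing the $\tfrac1\tau$-weighted remainder through \cref{eq:inverse} would lose control in $h$. The remedy I anticipate is a summation-by-parts/stochastic-Fubini rearrangement exposing the martingale cancellation in the stochastic portion of $\delta_j$, reducing the cost to the capped smoothing estimate $\norm{\nabla S_h(t)}_{\mathcal L(\mathbb{L}^2,\mathbb{L}^2)}\le c\min\{t^{-1/2},h^{-1}\}$ (\cref{lem:Sh}(i) combined with \cref{eq:inverse}); integrating the resulting logarithmically singular kernel is exactly what produces the $\ln\tfrac1h$ factor, while the only available control on $Y_j$ being $\norm{Y_j}_{\mathbb{L}^2}\le R_{h,\tau}$ yields the accompanying $R_{h,\tau}^2$. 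After bounding every piece — absorbing all $\tfrac14\tau\norm{\nabla e_{j+1}}^2$ contributions into the dissipation, controlling the Itô correction $\sum_j\norm{\Delta M_j}^2$ by $c\tau(1+R_h^2)+c\sum_j\tau\norm{e_j}^2$ via the Lipschitz property of $F$ (\cref{lem:F}(i)), and treating the martingale sum by Burkholder--Davis--Gundy so that $\tfrac12\,\mathbb{E}[\mathbbm{1}\max_m\norm{e_m}^2]$ is reabsorbed — I would apply the discrete Gronwall inequality to the residual $\sum_j\tau\norm{e_j}^2$ with coefficient $cR_h^2$, arriving at \cref{eq:local-conv}. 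I expect the delicate points to be making the localization compatible with Itô's formula on the discrete scheme and tracking the $\ln\tfrac1h$ through the capped kernel, rather than the individual nonlinear estimates.
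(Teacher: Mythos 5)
Your skeleton (localization by the stopping time so that the stopped and original processes coincide on \(\Omega_{\varrho,R_h,R_{h,\tau}}^{h,\tau}\), an implicit-Euler energy identity tested with \(e_{j+1}\), the trilinear cancellation \(b(v,e,e)=0\), and a discrete Gronwall argument) matches the paper's overall strategy, and you correctly identify the temporal consistency term \(\int_{I_j}\dual{\nabla\delta_j(s),\nabla e_{j+1}}\,\mathrm{d}s\) as the crux. But the step on which your whole proof hinges — the ``summation-by-parts/stochastic-Fubini rearrangement'' reducing the consistency cost to the capped kernel \(\min\{t^{-1/2},h^{-1}\}\) — is left as a gesture, and it does not go through inside a plain energy identity: once you have tested with \(e_{j+1}\) there is no semigroup kernel to cap, and exploiting \(S_h\)-smoothing forces a Duhamel/discrete-convolution representation of the error. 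Adaptedness kills the advertised cancellation too: \(e_{j+1}\) is \(\mathcal F_{(j+1)\tau}\)-measurable, so after stochastic Fubini the pairing \(\bigl\langle \nabla\int_{I_j}(r-j\tau)\,\mathcal P_hF(y_h(r))\,\mathrm{d}W_H(r),\,\nabla e_{j+1}\bigr\rangle\) is not a martingale increment, and trading \(e_{j+1}\) for \(e_j\) leaves \(\norm{\nabla(e_{j+1}-e_j)}_{\mathbb L^2}\), which only inverse estimates control; the drift part of \(\delta_j\) is equally problematic since \(A_hy_h\) is controlled only in \(\dot H_h^{\varrho-2,2}\). The paper's actual device is different: it splits \(E_{j,R_h}=\xi_{j,R_h}+\eta_{j,R_h}\), where \(\xi_{j,R_h}\) is the implicit-Euler convolution driven by \(A_h[y_{h,R_h}(t)-y_{h,R_h}(j\tau+\tau)]\), and estimates \(\max_j\norm{\xi_{j,R_h}}_{\mathbb L^2}\) by the discrete maximal-regularity estimate of \cref{lem:auxi} in a negative norm — equivalently, increments of \(y_{h,R_h}\) measured in \(\dot H_h^{\varrho-1,2}\), where \cref{lem:yh-interp} supplies genuine half-order regularity — at the price of running the energy identity for \(\eta\), partially losing the exact cancellation, and needing the a.s. bound \cref{eq:xi-regu1}. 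No logarithm arises from the consistency analysis at all.

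This exposes a second, structural inconsistency in your plan: your attribution of the factor \(\ln(1/h)\,R_{h,\tau}^2\). In your scheme the cancellation \(\dual{G(u)-G(v),\,u-v}=b(u-v,u,u-v)\) eliminates every appearance of \(Y_{j+1}\) beyond \(e_{j+1}\), so \(R_{h,\tau}\) never enters your estimates, and you exhibit no inequality that forces \(\ln(1/h)\); if your argument closed as written it would yield the strictly stronger bound \(c\tau\exp(cR_h^2)\), which should have signaled a missing step. In the paper the factor appears only in the upgrade from the pointwise-in-\(j\) moment bound \cref{eq:conv1} to the expectation of the maximum: the error is re-expanded as four discrete convolutions \(\chi^{(1)},\dots,\chi^{(4)}\), and the nonlinear block \(\mathcal P_h\bigl[G(y_{h,R_h}(j\tau))-G(Y_{j,R_h})\bigr]\) must then be bounded in \(\dot H_h^{-1,2}\) as a norm, with no test function available to cancel against; this costs an \(\mathbb L^\infty\) bound on the error, obtained from the discrete Agmon inequality \(\norm{u_h}_{\mathbb L^\infty}\leqslant c\sqrt{\ln(1/h)}\,\norm{\nabla u_h}_{\mathbb L^2}\) together with the only available control \(\norm{Y_j}_{\mathbb L^2}\leqslant R_{h,\tau}\), and the resulting \(\ln(1/h)R_{h,\tau}^2\norm{\nabla\eta_{j+1}}_{\mathbb L^2}^2\) in \cref{eq:E3} is absorbed by the dissipation from \cref{eq:conv1}. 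Your BDG-reabsorption idea for \(\mathbb E\max_j\) is a legitimate alternative to that step in principle, but as it stands your proof has a genuine gap at the consistency term: either you import the paper's \(\xi\)/maximal-regularity machinery (\cref{lem:auxi,lem:yh-interp}), after which your account of where the logarithm comes from must be discarded, or you must actually construct the rearrangement you gesture at — and under the tools you cite it does not close.
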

\begin{proof}
We introduce the stopping time
\begin{equation}
  \label{eq:tR-new}
  t_{R_h}^{h,\varrho} := \inf\{t \in [0,T]: \, \norm{y_h(t)}_{\dot H_h^{\varrho,2}} \geqslant R_h\},
\end{equation}
with the convention that \( t_{R_h}^{h,\varrho} = T \) if the set is empty.
Motivated by equation (4.5) from \cite{Breit2023}, we define the sequence
\( (Y_{j,R_h})_{j=1}^J \) by
\begin{equation*}
  \begin{cases}
    Y_{j+1,R_h} - Y_{j,R_h} = -  \displaystyle\int_{{j\tau}}^{{j\tau+\tau}} \mathbbm{1}_{[0,t_{R_h}^{h,\varrho}]}(t)
    \bigl[ A_h Y_{j+1,R_h} + \mathcal{P}_h G(Y_{j+1,R_h}) \bigr] \, \mathrm{d}t \\
    \qquad\qquad\qquad\qquad {} + \mathcal{P}_h \displaystyle\int_{{j\tau}}^{{j\tau+\tau}}
     \mathbbm{1}_{[0,t_{R_h}^{h,\varrho}]}(t) \, F(Y_{j,R_h}) \, \mathrm{d}W_H(t), \quad\quad \text{for } 0 \leqslant j < J, \\
    Y_{0,R_h} = \mathcal{P}_h y_0.
  \end{cases}
\end{equation*}
By \cite[Theorem~7.2]{Baldi2017}, \( Y_j = Y_{j,R_h} \) holds almost surely on the
event \( \{{j\tau} \leqslant t_{R_h}^{h,\varrho}\} \) for all $ 1 \leqslant j \leqslant J $.
We also define the truncated semidiscrete solution
\begin{equation}
  \label{eq:yhR1}
  y_{h,R_h}(t) := y_h(t \wedge t_{R_h}^{h,\varrho}), \quad \text{for } 0 \leqslant t \leqslant T.
\end{equation}
Throughout this proof, \( c \) denotes a generic positive constant, independent of
\( h \), \( \tau \), $ R_h $, and $ R_{h,\tau} $, which may vary from one occurrence to another.
The remainder of the proof is organized into
five steps.

\textbf{Step 1.}
By definition, for any \( 0 \leqslant j < J \), we have almost surely  
\[
\begin{aligned}
  y_{h,R_h}({j\tau+\tau}) - y_{h,R_h}({j\tau})
  &= -\int_{{j\tau}}^{{j\tau+\tau}} \mathbbm{1}_{[0,t_{R_h}^{h,\varrho}]}(t)
  \bigl[ A_h y_{h,R_h}(t) + \mathcal{P}_h G(y_{h,R_h}(t)) \bigr] \, \mathrm{d}t \\
  &\quad + \mathcal{P}_h \int_{{j\tau}}^{{j\tau+\tau}} \mathbbm{1}_{[0,t_{R_h}^{h,\varrho}]}(t) F(y_{h,R_h}(t)) \, \mathrm{d}W_H(t).
\end{aligned}
\]  
For each \( 0 \leqslant j \leqslant J \), define \( E_{j,R_h} := Y_{j,R_h} - y_{h,R_h}({j\tau}) \), and decompose \( E_{j,R_h} = \xi_{j,R_h} + \eta_{j,R_h} \), where \( (\xi_{j,R_h})_{j=0}^J \) satisfies  
\[
\begin{cases}
  \xi_{j+1,R_h} - \xi_{j,R_h} = -\int_{{j\tau}}^{{j\tau+\tau}}
  \mathbbm{1}_{[0,t_{R_h}^{h,\varrho}]}(t) \, A_h\xi_{j+1,R_h} \, \mathrm{d}t \\
  \qquad\qquad\qquad\qquad + \int_{{j\tau}}^{{j\tau+\tau}}
  \mathbbm{1}_{[0,t_{R_h}^{h,\varrho}]}(t) \, A_h\bigl[ y_{h,R_h}(t) - y_{h,R_h}({j\tau+\tau})\bigr] \, \mathrm{d}t, & 0 \leqslant j < J, \\
  \xi_{0,R_h} = 0.
\end{cases}
\]  
It follows that, for any \( 0 \leqslant j < J \), almost surely,  
\[
\begin{aligned}
  \eta_{j+1,R_h} - \eta_{j,R_h}
  &= -\int_{{j\tau}}^{{j\tau+\tau}} \mathbbm{1}_{[0,t_{R_h}^{h,\varrho}]}(t) A_h\eta_{j+1,R_h}  \, \mathrm{d}t \\
  &\quad + \mathcal{P}_h\int_{{j\tau}}^{{j\tau+\tau}} \mathbbm{1}_{[0,t_{R_h}^{h,\varrho}]}(t) \bigl[
    G(y_{h,R_h}(t)) - G(Y_{j+1,R_h})
  \bigr] \, \mathrm{d}t \\
  &\quad + \mathcal{P}_h \int_{{j\tau}}^{{j\tau+\tau}}
  \mathbbm{1}_{[0,t_{R_h}^{h,\varrho}]}(t) \bigl[
    F(Y_{j,R_h}) - F(y_{h,R_h}(t))
  \bigr] \, \mathrm{d}W_H(t).
\end{aligned}
\]  
Taking the \( L^2(\Omega; \mathbb{L}^2) \)-inner product with \( \eta_{j+1,R_h} \), summing over \( j = 0, \dots, m \) for \( 0 \leqslant m < J \), and applying Hölder's inequality and Itô's isometry, we obtain  
\begin{small}
\[
\begin{aligned}
  \frac{1}{2} \mathbb{E} \|\eta_{m+1,R_h}\|_{\mathbb{L}^2}^2
  &\leqslant \mathbb{E} \bigg[
    \sum_{j=0}^m \int_{{j\tau}}^{{j\tau+\tau}} \mathbbm{1}_{[0,t_{R_h}^{h,\varrho}]}(t)
    \big\langle -A_h \eta_{j+1,R_h}, \, \eta_{j+1,R_h} \big\rangle \, \mathrm{d}t
  \bigg] \\
  &\quad + \mathbb{E} \bigg[
    \sum_{j=0}^m \int_{{j\tau}}^{{j\tau+\tau}} \mathbbm{1}_{[0,t_{R_h}^{h,\varrho}]}(t)
    \big\langle G(y_{h,R_h}(t)) - G(y_{h,R_h}({j\tau+\tau})), \, \eta_{j+1,R_h} \big\rangle \, \mathrm{d}t
  \bigg] \\
  &\quad + \mathbb{E} \bigg[
    \sum_{j=0}^m \int_{{j\tau}}^{{j\tau+\tau}} \mathbbm{1}_{[0,t_{R_h}^{h,\varrho}]}(t)
    \big\langle G(y_{h,R_h}({j\tau+\tau})) - G(Y_{j+1,R_h}), \, \eta_{j+1,R_h} \big\rangle \, \mathrm{d}t
  \bigg] \\
  &\quad + \frac{1}{2} \mathbb{E} \bigg[
    \sum_{j=0}^m \int_{{j\tau}}^{{j\tau+\tau}} \mathbbm{1}_{[0,t_{R_h}^{h,\varrho}]}(t)
    \|F(Y_{j,R_h}) - F(y_{h,R_h}(t))\|_{\gamma(H,\mathbb{L}^2)}^2 \, \mathrm{d}t
  \bigg] \\
  &=: I_m^{(1)} + I_m^{(2)} + I_m^{(3)} + I_m^{(4)}.
\end{aligned}
\]
\end{small}

    \textbf{Step 2.}
    Let us estimate \( I_m^{(1)} \) and \( I_m^{(2)} \).
    For \( I_m^{(1)} \), the definition of \( A_h \) yields
    \begin{align}
      \label{eq:Im1}
      I_m^{(1)} = - \mathbb E \left[\,
        \sum_{j=0}^m \int_{{j\tau}}^{{j\tau+\tau}}
        \mathbbm{1}_{[0,t_{R_h}^{h,\varrho}]}(t)
        \norm{\nabla \eta_{j+1,R_h}}_{\mathbb L^2}^2 \, \mathrm{d}t
      \,\right].
    \end{align}
    For \( I_m^{(2)} \), considering the definition of \( G \) in \cref{eq:G-def}, an application of integration by parts leads to the following identity for any \( 0 \leqslant j \leqslant m \) and \( t \in [{j\tau}, {j\tau+\tau}] \):
    \begin{align*}
      & \big\langle \eta_{j+1,R_h}, \, G(y_{h,R_h}(t)) - G(y_{h,R_h}({j\tau+\tau})) \big\rangle \\
      ={}& \left\langle \eta_{j+1,R_h}, \, \left[\big(y_{h,R_h}(t) - y_{h,R_h}({j\tau+\tau})\big) \cdot \nabla\right] \Big( y_{h,R_h}(t) - \frac12 y_{h,R_h}({j\tau+\tau}) \Big) \right\rangle \\
      &\quad {} + \left\langle \eta_{j+1,R_h}, \, \Big[\nabla \cdot \Big( \frac12y_{h,R_h}(t) - y_{h,R_h}({j\tau+\tau}) \Big)\Big] \Big( y_{h,R_h}(t) - y_{h,R_h}({j\tau+\tau}) \Big) \right\rangle \\
      &\quad {} - \frac{1}{2} \Big\langle \big[\big( y_{h,R_h}(t) - y_{h,R_h}({j\tau+\tau}) \big) \cdot \nabla\big] \eta_{j+1,R_h}, \, y_{h,R_h}({j\tau+\tau}) \Big\rangle \\
      &\quad {} - \Big\langle \big[y_{h,R_h}({j\tau+\tau})\cdot\nabla\big] \eta_{j+1,R_h}, \, y_{h,R_h}(t) - y_{h,R_h}({j\tau+\tau}) \Big\rangle.
    \end{align*}
    By Hölder's inequality, the \( h \)-independent embeddings (see \cref{rem:discrete_sobolev})
\[
\dot H_h^{\varrho,2} \hookrightarrow W^{1,2/(2-\varrho)}(\mathcal O;\mathbb R^2), \quad
\dot H_h^{\varrho,2} \hookrightarrow \mathbb L^\infty,
\quad\dot{H}_h^{1,2} \hookrightarrow \mathbb{L}^q \text{ for all } q \in (2,\infty), 
\]  
and the definition of \( t_{R_h}^{h,\varrho} \) in \cref{eq:tR-new}, we derive, for any \( 0 \leqslant j < J \) and \( t \in [j\tau, j\tau+\tau] \),  
\begin{equation}
  \label{eq:57}
\begin{aligned}
  & \mathbbm{1}_{[0,t_{R_h}^{h,\varrho}]}(t) \,
  \big\langle \eta_{j+1,R_h}, \, G(y_{h,R_h}(t)) - G(y_{h,R_h}({j\tau+\tau})) \big\rangle \\
  \leqslant{} & cR_h \mathbbm{1}_{[0,t_{R_h}^{h,\varrho}]}(t) \, \norm{y_{h,R_h}(t) - y_{h,R_h}({j\tau+\tau})}_{\mathbb{L}^2}
   \norm{\nabla \eta_{j+1,R_h}}_{\mathbb{L}^2},
   \quad\text{$\mathrm{d}\mathbb P\otimes\mathrm{d}t$-a.e.}
\end{aligned}
\end{equation}
    Consequently,
    \begin{small}
    \begin{align}
      \label{eq:Im2}
      I_m^{(2)} \leqslant \mathbb E
      \left[\,
        \sum_{j=0}^m \int_{{j\tau}}^{{j\tau+\tau}} \mathbbm{1}_{[0,t_{R_h}^{h,\varrho}]}(t)
        \Big(
          \frac14 \norm{\nabla \eta_{j+1,R_h}}_{\mathbb{L}^2}^2
          + c R_h^2 \norm{y_{h,R_h}(t) - y_{h,R_h}({j\tau+\tau})}_{\mathbb{L}^2}^2
        \Big) \, \mathrm{d}t
      \,\right].
    \end{align}
  \end{small}


    \textbf{Step 3.} We now proceed to estimate \( I_m^{(3)} \) and \( I_m^{(4)} \).
    Firstly, observe that the definition of \( t_{R_h}^{h,\varrho} \) in \cref{eq:tR-new} implies that, almost surely,
    \[
      \Big\lVert \mathbbm{1}_{[0,t_{R_h}^{h,\varrho}]} A_h\big[ y_{h,R_h} - y_{h,R_h}(j\tau+\tau)\big] \Big\rVert_{C((j\tau,j\tau+\tau]; \dot{H}_h^{\varrho-2,2})} \leqslant 2R_h
      \quad\text{for all $0\leqslant j < J$,}
    \]
    which allows us to apply \cref{lem:auxi} to deduce that
    \begin{align}
      \max_{1 \leqslant j \leqslant J}
      \norm{\xi_{j,R_h}}_{\dot{H}_h^{(1+\varrho)/2,2}} \leqslant c R_h,
      \quad \text{almost surely.}
      \label{eq:xi-regu1}
    \end{align}
    For any \( 0 \leqslant j \leqslant m \) and \( t \in [{j\tau}, {j\tau+\tau}] \), a direct calculation
    using integration by parts yields
    \begin{align*}
      & \mathbbm{1}_{[0,t_{R_h}^{h,\varrho}]}(t)\,
      \Big\langle  G(y_{h,R_h}({j\tau+\tau})) - G(Y_{j+1,R_h}), \, \eta_{j+1,R_h} \Big\rangle \\
      ={}& \mathbbm{1}_{[0,t_{R_h}^{h,\varrho}]}(t) \, \biggl\{
        - \frac{1}{2} \Big\langle \eta_{j+1,R_h}, \, (E_{j+1,R_h} \cdot \nabla) \big( y_{h,R_h}({j\tau+\tau}) + \xi_{j+1,R_h} \big) \Big\rangle \\
         & \qquad\qquad\qquad {} + \frac{1}{2} \Big\langle (E_{j+1,R_h} \cdot \nabla) \eta_{j+1,R_h}, \, y_{h,R_h}({j\tau+\tau}) + \xi_{j+1,R_h} \Big\rangle \\
         & \qquad\qquad\qquad {} + \frac{1}{2} \Big\langle \big(\nabla \cdot y_{h,R_h}({j\tau+\tau})\big) \xi_{j+1,R_h}, \, \eta_{j+1,R_h} \Big\rangle \\
         & \qquad\qquad\qquad {} + \Big\langle \big(y_{h,R_h}({j\tau+\tau}) \cdot \nabla\big) \eta_{j+1,R_h}, \, \xi_{j+1,R_h} \Big\rangle
       \biggr\}.
      \end{align*}
      Thus, following the argument in \cref{eq:57} and utilizing the stability estimate \cref{eq:xi-regu1} along with the
      equality $ E_{j+1,R_h} = \xi_{j+1,R_h} + \eta_{j+1,R_h} $, we infer that
      \begin{align*}
        & \mathbbm{1}_{[0,t_{R_h}^{h,\varrho}]}(t) \, \Big\langle G(y_{h,R_h}({j\tau+\tau})) - G(Y_{j+1,R_h}), \, \eta_{j+1,R_h} \Big\rangle \\
        \leqslant{}
        & c R_h \mathbbm{1}_{[0,t_{R_h}^{h,\varrho}]}(t) \, \|\nabla \eta_{j+1,R_h}\|_{\mathbb{L}^2}
        \big(
          \|\xi_{j+1,R_h}\|_{\mathbb{L}^2} + \|\eta_{j+1,R_h}\|_{\mathbb{L}^2}
        \big),
        \quad\text{$\mathrm{d}\mathbb P\otimes\mathrm{d}t$-a.e.}
      \end{align*}
      Consequently,
      \begin{small}
      \begin{equation}
        \label{eq:Im3}
        I_m^{(3)} \leqslant \mathbb{E}
        \left[\,
          \sum_{j=0}^m \int_{{j\tau}}^{{j\tau+\tau}} \mathbbm{1}_{[0,t_{R_h}^{h,\varrho}]}(t) \Big(
            \frac14 \|\nabla \eta_{j+1,R_h}\|_{\mathbb{L}^2}^2
            + c R_h^2 \|\xi_{j+1,R_h}\|_{\mathbb{L}^2}^2
            + cR_h^2 \|\eta_{j+1,R_h}\|_{\mathbb{L}^2}^2
          \Big) \, \mathrm{d}t
        \,\right].
      \end{equation}
    \end{small}
      \hskip -.2em For \( I_m^{(4)} \), we apply the Lipschitz continuity of $ F $
      from \cref{lem:F}(i) and the identity
      \[
        \mathbbm{1}_{[0,t_{R_h}^{h,\varrho}]}(t) \bigl( Y_{j,R_h} - y_{h,R_h}(t) \bigr) =
        \mathbbm{1}_{[0,t_{R_h}^{h,\varrho}]}(t) \bigl( \xi_{j,R_h} + \eta_{j,R_h} + y_{h,R_h}({j\tau}) - y_{h,R_h}(t) \bigr),
      \]
      for all \( 0 \leqslant j < J \) and \( t \in [{j\tau}, {j\tau+\tau}] \), to derive the following estimate:
      \begin{small}
      \begin{equation}
        \label{eq:Im4}
        I_m^{(4)} \leqslant c \mathbb{E} \left[\,
          \sum_{j=0}^m \int_{{j\tau}}^{{j\tau+\tau}} \mathbbm{1}_{[0,t_{R_h}^{h,\varrho}]}(t)
          \big( \norm{y_{h,R_h}(t) - y_{h,R_h}({j\tau})}_{\mathbb{L}^2}^2 + \norm{\xi_{j,R_h}}_{\mathbb{L}^2}^2 + \norm{\eta_{j,R_h}}_{\mathbb{L}^2}^2 \big) \, \mathrm{d}t
        \,\right].
      \end{equation}
    \end{small}

\textbf{Step 4.}
For any \( p \in (2/(\varrho-1),\infty) \), applying \cref{lem:auxi}, we obtain
\begin{small}
\begin{align*}
\mathbb{E}\left[\max_{1 \leqslant j \leqslant J} \|\xi_{j,R_h}\|_{\mathbb{L}^2}^2\right]
\leqslant
c \mathbb{E} \left[
\left( \sum_{j=0}^{J-1} \int_{{j\tau}}^{{j\tau+\tau}}
\mathbbm{1}_{[0,t_{R_h}^{h,\varrho}]}(t)
\|y_{h,R_h}(t)-y_{h,R_h}({j\tau+\tau})\|_{\dot{H}_h^{\varrho-1,2}}^p \, \mathrm{d}t
\right)^{2/p}
\right].
\end{align*}
\end{small}
\hskip -.3em In conjunction with Lemma \ref{lem:yh-interp}, this inequality leads us to
\begin{equation}
\mathbb{E}\Big[ \, \max_{1 \leqslant j \leqslant J} \|\xi_{j,R_h}\|_{\mathbb{L}^2}^2 \, \Big]
\leqslant c\tau R_h^2(1 + \tau^{(5-3\varrho)/2} R_h^2).
\label{eq:xi-conv}
\end{equation}
Furthermore, Lemma \ref{lem:yh-interp} also implies that
\begin{equation}
\label{eq:yh-interp-L2}
\begin{split}
& \mathbb{E} \bigg[\,
\sum_{j=0}^{J-1} \int_{{j\tau}}^{{j\tau+\tau}}
\mathbbm{1}_{[0, t_{R_h}^{h,\varrho}]}(t)
\Big( \|y_{h,R_h}(t) - y_{h,R_h}({j\tau+\tau})\|_{\mathbb{L}^2}^2 \\
& \qquad\qquad {} + \|y_{h,R_h}(t) - y_{h,R_h}(t_{j})\|_{\mathbb{L}^2}^2 \Big)
\, \mathrm{d}t
\,\bigg] \leqslant c \tau R_h^2 \left( 1 + \tau R_h^2 \right).
\end{split}
\end{equation}
By combining the estimates for \( I_m^{(1)}, \ldots, I_m^{(4)} \)
from \cref{eq:Im1,eq:Im2,eq:Im3,eq:Im4} and
utilizing inequalities \cref{eq:xi-conv,eq:yh-interp-L2},
we infer that there exists a constant \( c_0 > 0 \),
independent of \( h \), \( \tau \), and $ R_h $, such that for any \( 0 \leqslant m < J \),
\begin{equation}
\label{eq:etaR1}
\begin{aligned}
& \mathbb{E} \left[\,
\|\eta_{m+1,R_h}\|_{\mathbb{L}^2}^2
+ \sum_{j=0}^m \int_{{j\tau}}^{{j\tau+\tau}} \mathbbm{1}_{[0,t_{R_h}^{h,\varrho}]}(t) \|\nabla \eta_{j+1,R_h}\|_{\mathbb{L}^2}^2 \, \mathrm{d}t
\,\right] \\
\leqslant{} &
c_0 R_h^2 \, \mathbb{E} \bigg[\,
\sum_{j=0}^m \int_{{j\tau}}^{{j\tau+\tau}} \mathbbm{1}_{[0,t_{R_h}^{h,\varrho}]}(t)
\|\eta_{j+1,R_h}\|_{\mathbb{L}^2}^2 \, \mathrm{d}t \, \bigg]
+ c_0 \tau R_h^4\big(1 + \tau^{(5-3\varrho)/2}R_h^2\big).
\end{aligned}
\end{equation}
It follows that
\begin{equation}
\label{eq:conv1}
\begin{aligned}
& \max_{1 \leqslant j \leqslant J} \|\eta_{j,R_h}\|_{L^2(\Omega; \mathbb{L}^2)}^2
+ \mathbb{E} \biggl[\,
\sum_{j=0}^{J-1} \int_{{j\tau}}^{{j\tau+\tau}} \mathbbm{1}_{[0,t_{R_h}^{h,\varrho}]}(t)
\|\nabla \eta_{j+1,R_h}\|_{\mathbb{L}^2}^2 \, \mathrm{d}t
\,\biggr] \\
\leqslant & \, c \tau \exp(c R_h^2).
\end{aligned}
\end{equation}
To prove this, consider two cases. If \( c_0R_h^2 \tau \leqslant 1/2 \),
then applying the discrete Gronwall's inequality to \cref{eq:etaR1}
yields \cref{eq:conv1}. For the alternative case where \( c_0R_h^2 \tau > 1/2 \),
a routine argument using the property $ \dual{G(u_h),u_h} = 0 $ for all $ u_h \in \mathbb L_{h,\mathrm{div}} $
ensures that $ \max_{0 \leqslant j \leqslant J} \|Y_{j,R_h}\|_{L^2(\Omega;\mathbb L^2)} $
is uniformly bounded with respect to \( h \), \( \tau \), and $ R_h $.
This result, combined with \cref{eq:xi-regu1} and the almost sure bound
\[
  \big\lVert\mathbbm{1}_{[0,t_{R_h}^{h,\varrho}]} y_{h,R_h}\big\rVert_{C((0,T];\dot H_h^{\varrho,2})}
  \leqslant R_h,
\]
allows us to deduce from the equality \( \eta_{j,R_h} = Y_{j,R_h} - y_{h,R_h}(j\tau) - \xi_{j,R_h} \)
for \( 1 \leqslant j \leqslant J \) that
\[
    \mathbb E\bigg[\,\sum_{j=0}^{J-1} \int_{j\tau}^{j\tau+\tau}
    \mathbbm{1}_{[0,t_{R_h}^{h,\varrho}]}(t) \|\eta_{j+1,R_h}\|_{\mathbb L^2}^2
    \, \mathrm{d}t \,\bigg] \leqslant cR_h^2.
\]
Consequently, in view of \cref{eq:etaR1},
we assert that \cref{eq:conv1} holds in this case as well.

\textbf{Step 5.}
We define the sequences $(\chi_{j,R_h}^{(i)})_{j=0}^J$ for $i = 1,2,3,4$ as follows:
Given $0 \leqslant j < J$, with the initial conditions $\chi_{0,R_h}^{(i)} = 0$ for all $i = 1,2,3,4$,
the recursion relations are given by
\begin{small}
\[
\begin{aligned}
& \chi_{j+1,R_h}^{(1)} - \chi_{j,R_h}^{(1)} + \tau A_h \chi_{j+1,R_h}^{(1)} = \int_{j\tau}^{j\tau+\tau} \mathbbm{1}_{[0,t_{R_h}^{h,\varrho}]}(t) \, A_h \big[ y_{h,R_h}(t) - y_{h,R_h}(j\tau+\tau) \big] \, \mathrm{d}t, \\
& \chi_{j+1,R_h}^{(2)} - \chi_{j,R_h}^{(2)} + \tau A_h \chi_{j+1,R_h}^{(2)} = \mathcal{P}_h \int_{j\tau}^{j\tau+\tau} \mathbbm{1}_{[0,t_{R_h}^{h,\varrho}]}(t) \, \big[ G(y_{h,R_h}(t)) - G(y_{h,R_h}(j\tau+\tau)) \big] \, \mathrm{d}t, \\
& \chi_{j+1,R_h}^{(3)} - \chi_{j,R_h}^{(3)} + \tau A_h \chi_{j+1,R_h}^{(3)} = \mathcal{P}_h \int_{j\tau}^{j\tau+\tau} \mathbbm{1}_{[0,t_{R_h}^{h,\varrho}]}(t) \, \big[ G(y_{h,R_h}(j\tau+\tau)) - G(Y_{j+1,R_h}) \big] \, \mathrm{d}t, \\
& \chi_{j+1,R_h}^{(4)} - \chi_{j,R_h}^{(4)} + \tau A_h \chi_{j+1,R_h}^{(4)} = \mathcal{P}_h \int_{j\tau}^{j\tau+\tau} \mathbbm{1}_{[0,t_{R_h}^{h,\varrho}]}(t) \, \big[ F(Y_{j,R_h}) - F(y_{h,R_h}(t)) \big] \, \mathrm{d}W_H(t).
\end{aligned}
\]
\end{small}
\hskip -.2em For the sequence \((\chi_{j,R_h}^{(1)})_{j=1}^J\), an argument similar to the derivation of inequality \eqref{eq:xi-conv} yields the bound:
\begin{equation}
  \label{eq:E1}
\mathbb{E}\left[
\max_{1 \leqslant j \leqslant J} \lVert \chi_{j,R_h}^{(1)}\rVert_{\mathbb{L}^2}^2
\right] \leqslant c\tau R_h^2(1+\tau^{(5-3\varrho)/2} R_h^2).
\end{equation}
For the sequence \((\chi_{j,R_h}^{(2)})_{j=1}^J\), note that
using techniques similar to those in Step 2 yields that,
for any $ 0 \leqslant j < J $ and $ t \in [j\tau, j\tau+\tau] $,
\begin{align*}
  & \mathbbm{1}_{[0,t_{R_h}^{h,\varrho}]}(t)
  \left\lVert\mathcal P_h\big[G(y_{h,R_h}(t)) - G(y_{h,R_h}({j\tau+\tau}))\big] \right\rVert_{\dot H_h^{-1,2}} \\
  \leqslant{}& cR_h \mathbbm{1}_{[0,t_{R_h}^{h,\varrho}]}(t)
  \norm{y_{h,R_h}(t) - y_{h,R_h}({j\tau+\tau})}_{\mathbb L^2},
  \quad\text{$\mathrm{d}\mathbb P\otimes\mathrm{d}t$-a.e.,}
\end{align*}
This enables us to use \cref{lem:auxi} to deduce the following bound:
\begin{align}
\mathbb{E} \left[
\max_{1 \leqslant j \leqslant J} \norm{\chi_{j,R_h}^{(2)}}_{\mathbb{L}^2}^2
\right]
\leqslant cR_h^2 \mathbb{E} \left[
\sum_{j=0}^{J-1} \int_{{j\tau}}^{{j\tau+\tau}}
\mathbbm{1}_{[0,t_{R_h}^{h,\varrho}]}(t)
\norm{y_{h,R_h}(t) - y_{h,R_h}({j\tau+\tau})}_{\mathbb L^2}^2 \, \mathrm{d}t
\right].
\label{eq:E2}
\end{align}
For the sequence \((\chi_{j,R_h}^{(3)})_{j=1}^J\), we begin by noting that,
through the application of techniques analogous to those in Step 2,
coupled with the stability property of \( (\xi_{j,R_h})_{j=1}^J \) as stated in \cref{eq:xi-regu1},
the equality \( Y_j = Y_{j,R_h} \) for each \( 0 \leqslant j \leqslant J \) almost surely
on \( \Omega_{\varrho,R_h,R_{h,\tau}}^{h,\tau} \),
and the standard inequality (cf.~\cite[Lemma~4.9.2]{Brenner2008})
\[
\norm{u_h}_{\mathbb L^\infty} \leqslant c\sqrt{\ln\left(\frac{1}{h}\right)} \, \norm{\nabla u_h}_{\mathbb L^2},
\quad \forall u_h \in \mathbb L_{h,\text{div}},
\]
we deduce that, for any \( 1 \leqslant j \leqslant J \),
\begin{align*}
 \left\lVert\mathcal{P}_h\big[G(y_{h,R_h}(t_{j})) - G(Y_{j,R_h})\big]\right\rVert_{\dot{H}_h^{-1,2}} 
\leqslant
 cR_h \big(
\norm{\xi_{j,R_h}}_{\mathbb{L}^2}
+ \norm{\eta_{j,R_h}}_{\mathbb{L}^2}
\big) \\
+ c\sqrt{\ln\left(\frac{1}{h}\right)} \, R_{h,\tau} \norm{\nabla \eta_{j,R_h}}_{\mathbb{L}^2}
\quad \text{almost surely on } \Omega_{\varrho,R_h,R_{h,\tau}}^{h,\tau}.
\end{align*}
Subsequently, invoking \cref{lem:auxi}, we derive the following bound:
\begin{equation}
  \label{eq:E3}
  \begin{aligned}
& \mathbb{E} \left[
\mathbbm{1}_{\Omega_{\varrho,R_h,R_{h,\tau}}^{h,\tau}}
\max_{1 \leqslant j \leqslant J} \big\lVert \chi_{j,R_h}^{(3)} \big\rVert_{\mathbb{L}^2}^2
\right] \\
\leqslant{}
& c \mathbb{E} \Biggl[
\mathbbm{1}_{\Omega_{\varrho,R_h,R_{h,\tau}}^{h,\tau}}
\sum_{j=0}^{J-1} \int_{{j\tau}}^{{j\tau+\tau}}
\Bigl(
R_h^2\norm{\eta_{j+1,R_h}}_{\mathbb{L}^2}^2 + R_h^2\norm{\xi_{j+1,R_h}}_{\mathbb{L}^2}^2 \\
& \qquad\qquad\qquad\qquad\qquad\qquad\qquad
{} + \ln\left(\frac{1}{h}\right)R_{h,\tau}^2\norm{\nabla\eta_{j+1,R_h}}_{\mathbb{L}^2}^2
\Bigr) \, \mathrm{d}t
\Biggr].
\end{aligned}
\end{equation}
For the sequence \((\chi_{j,R_h}^{(4)})_{j=1}^J\),
using Lemma \ref{lem:DSMLP} and the Lipschitz property of \(F\) from \cref{lem:F}(i),
we obtain the bound
\begin{align}
\mathbb{E} \left[\max_{1 \leqslant j \leqslant J}
\big\lVert \chi_{j,R_h}^{(4)} \big\rVert_{\mathbb{L}^2}^2\right]
\leqslant c \mathbb{E} \left[\sum_{j=0}^{J-1} \int_{{j\tau}}^{{j\tau+\tau}}
\mathbbm{1}_{[0,t_{R_h}^{h,\varrho}]}(t) \norm{y_{h,R_h}(t) - Y_{j,R_h}}_{\mathbb{L}^2}^2 \, \mathrm{d}t\right].
\label{eq:E4}
\end{align}
By combining the preceding bounds in \cref{eq:E1,eq:E2,eq:E3,eq:E4} and
utilizing estimates \cref{eq:xi-conv,eq:yh-interp-L2,eq:conv1}, we arrive at
\begin{align*}
& \mathbb{E} \left[
\mathbbm{1}_{\Omega_{\varrho,R_h,R_{h,\tau}}^{h,\tau}}
\max_{1 \leqslant j \leqslant J} \normb{
\chi_{j,R_h}^{(1)} + \chi_{j,R_h}^{(2)} + \chi_{j,R_h}^{(3)} + \chi_{j,R_h}^{(4)}
}_{\mathbb{L}^2}^2
\right] \\
\leqslant{}
& c\tau\exp(cR_h^2) \left(1 + \ln\left(\frac{1}{h}\right) R_{h,\tau}^2\right).
\end{align*}
The desired pathwise uniform convergence in \eqref{eq:local-conv} follows from the
observation that almost surely on \(\Omega_{\varrho,R_h,R_{h,\tau}}^{h,\tau}\),
\[
Y_j - y_h({j\tau}) = \chi_{j,R_h}^{(1)} + \chi_{j,R_h}^{(2)} + \chi_{j,R_h}^{(3)} + \chi_{j,R_h}^{(4)}
\quad \text{for all } 1 \leqslant j \leqslant J.
\]
This completes the proof of Theorem \ref{thm:yhR-YR}.
\end{proof}

\begin{remark}
  \label{rem:discrete_sobolev}
  The above proof relies on standard discrete Sobolev embeddings.
  For completeness, we provide key details. Let \(c > 0\)
  denote a generic \(h\)-independent constant.
  For \(\varrho \in (1, 2)\) and \(u_h \in \mathbb{L}_{h,\mathrm{div}}\), we have:
  \begin{align*}
    \norm{A_2^{-1}\mathcal{P} A_h u_h}_{\dot{H}^{\varrho,2}}
    &= \norm{\mathcal{P} A_h u_h}_{\dot{H}^{\varrho-2,2}} 
    = \sup_{\varphi \in \dot H^{2-\varrho,2}} \frac{\dual{A_hu_h,\mathcal P_h\varphi}}{\norm{\varphi}_{\dot{H}^{2-\varrho,2}}} \\
    &\leqslant c \norm{A_h u_h}_{\dot{H}_h^{\varrho-2,2}} \quad \text{(by Lemma~\ref{lem:Ph}(ii))} \\
    &= c \norm{u_h}_{\dot{H}_h^{\varrho,2}}.
  \end{align*}
  By \cite[Theorem~3.1]{Girault2003}, there exists \( w_h \in \mathbb L_{h,\mathrm{div}} \) satisfying
  \begin{align*}
    & \norm{A_2^{-1}\mathcal PA_hu_h - w_h}_{\mathbb L^2} \leqslant ch^\varrho \norm{u_h}_{\dot H_h^{\varrho,2}}, \\
    & \norm{w_h}_{\mathbb L^\infty} \leqslant c\norm{u_h}_{\dot H_h^{\varrho,2}}.
  \end{align*}
  Applying \cite[Theorem 3.1]{Stenberg1990} yields
  \[
    \norm{A_2^{-1}\mathcal{P} A_h u_h - u_h}_{\mathbb{L}^2} \leqslant ch^2 \norm{u_h}_{\dot{H}_h^{2,2}}
    \leqslant ch^\varrho \norm{u_h}_{\dot H_h^{\varrho,2}}
    \quad\mathrm{(by \cref{eq:inverse})}.
  \]
  Combining these estimates with the inverse estimate
  \( \norm{u_h-w_h}_{\mathbb L^\infty} \leqslant ch^{-1}\norm{u_h-w_h}_{\mathbb L^2} \),
  we obtain
  \( \norm{u_h}_{\mathbb{L}^\infty} \leqslant c \norm{u_h}_{\dot{H}_h^{\varrho,2}} \).
  Similarly, we have
  \[
    \norm{\nabla u_h}_{\mathbb L^{2/(2-\varrho)}}
    \leqslant c \norm{u_h}_{\dot H_h^{\varrho,2}}.
  \]
\end{remark}

Finally, using the established convergence results presented in Theorems \ref{thm:yR-yhR} and \ref{thm:yhR-YR}, along with the derived regularity result of the spatial semidiscretization \cref{eq:yh} as stated in \cref{thm:yh-regu}, we are able to derive the rate of convergence in probability for the solution of the full discretization \cref{eq:Y} to the global mild solution of the model problem \cref{eq:model}. The main result is presented in the following theorem.

\begin{theorem}
  Suppose \( y_0 \in L_{\mathcal F_0}^4(\Omega;\dot H^{3/2,2}) \).
  Fix \( \alpha \in (2,3) \) and \( \beta \in (0,1) \).
  Let \( y \) be the global mild solution to \cref{eq:model} in \( V_{\varrho} \) with
  \( \varrho \in (\frac{\alpha}{2}, \frac{3}{2}) \).
  Let $ (Y_j)_{j=0}^J $ be the solution to the full discretization \cref{eq:Y}.
  Then, for any \( \epsilon > 0 \),
  \begin{equation}
    \label{eq:y-Y}
    \lim_{\substack{h \to 0, \, \tau \to 0 \\ \tau \leqslant h}} \mathbb{P}\left\{
    \frac{\max_{1 \leqslant j \leqslant J} \norm{y(j\tau) - Y_j}_{\mathbb{L}^2}^2}{h^\alpha + \tau^\beta} \geqslant \epsilon \right\} = 0.
  \end{equation}
\end{theorem}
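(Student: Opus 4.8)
The plan is to combine the two quantitative error bounds \cref{eq:yR-yhR} and \cref{eq:local-conv} through a localization argument in the spirit of \cite{Printems2001,Breit2021,Breit2023}, splitting $\Omega$ into a regular subset on which the error estimates are in force and an irregular subset whose probability is driven to zero. Since $y_0\in L_{\mathcal F_0}^4(\Omega;\dot H^{3/2,2})$ and $\varrho\in(\tfrac{\alpha}{2},\tfrac32)\subset(1,\tfrac32)$, the embedding $\dot H^{3/2,2}\hookrightarrow\dot H^{\varrho,2}$ ensures the hypotheses of \cref{prop:regu,thm:yh-regu,thm:yR-yhR,thm:yhR-YR} all hold with $p=4$. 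The triangle inequality $\max_{1\leqslant j\leqslant J}\norm{y(j\tau)-Y_j}_{\mathbb L^2}^2\leqslant 2\norm{y-y_h}_{C([0,T];\mathbb L^2)}^2+2\max_{1\leqslant j\leqslant J}\norm{y_h(j\tau)-Y_j}_{\mathbb L^2}^2$ reduces the problem to controlling the spatial error $y-y_h$ and the temporal error $y_h-Y_j$ separately. Accordingly I would introduce thresholds $R$, $R_h$, $R_{h,\tau}$ and the events $G_1:=\{t_{R,\varrho}=T\}$, $G_2:=\{\norm{y_h}_{C([0,T];\dot H_h^{\varrho,2})}\leqslant R_h\}$, $G_3:=\{\max_{0\leqslant j\leqslant J}\norm{Y_j}_{\mathbb L^2}\leqslant R_{h,\tau}\}$, so that $G_2\cap G_3=\Omega_{\varrho,R_h,R_{h,\tau}}^{h,\tau}$, and bound the probability in \cref{eq:y-Y} by $\mathbb P(G_1^c)+\mathbb P(G_2^c)+\mathbb P(G_3^c)$ plus the two conditional contributions $\mathbb P(G_1\cap\{2\norm{y-y_h}_C^2\geqslant\tfrac{\epsilon}{2}(h^\alpha+\tau^\beta)\})$ and $\mathbb P(G_2\cap G_3\cap\{2\max_j\norm{y_h(j\tau)-Y_j}^2\geqslant\tfrac{\epsilon}{2}(h^\alpha+\tau^\beta)\})$.

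For the spatial contribution I would work on $G_1$, where \cref{eq:yR-yhR} gives $\norm{y-y_h}_{L^4(G_1;C([0,T];\mathbb L^2))}\leqslant ch^\varrho\ln\tfrac1h\exp(cR^2)$. Using $h^\alpha+\tau^\beta\geqslant h^\alpha$ and Chebyshev's inequality with the fourth moment, the first conditional term is bounded by a constant times $\big(h^{\varrho-\alpha/2}\ln\tfrac1h\,\exp(cR^2)\big)^4$. Choosing $R^2=\gamma_1\ln\tfrac1h$ with $\gamma_1>0$ small enough that $c\gamma_1<\varrho-\tfrac{\alpha}{2}$, which is possible precisely because $\varrho>\tfrac{\alpha}{2}$, turns $\exp(cR^2)$ into $h^{-c\gamma_1}$ and leaves a strictly positive power of $h$, so this term vanishes; simultaneously $\mathbb P(G_1^c)\leqslant c/\ln(1+R)\sim c/\ln\ln\tfrac1h\to0$ by \cref{eq:P-tRrho}.

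For the temporal contribution I would work on $G_2\cap G_3=\Omega_{\varrho,R_h,R_{h,\tau}}^{h,\tau}$, where \cref{eq:local-conv} together with $h^\alpha+\tau^\beta\geqslant\tau^\beta$ and Markov's inequality bounds the second conditional term by $c_\epsilon\,\tau^{1-\beta}\exp(cR_h^2)\big(1+\ln\tfrac1h\,R_{h,\tau}^2\big)$. Invoking $\tau\leqslant h$ (hence $\ln\tfrac1h\leqslant\ln\tfrac1\tau$) and choosing $R_h^2=\gamma_2\ln\tfrac1\tau$ with $c\gamma_2<1-\beta$, possible because $\beta<1$, together with a slowly diverging $R_{h,\tau}=(\ln\tfrac1\tau)^{1/4}$, makes this product behave like $\tau^{1-\beta-c\gamma_2}$ times a poly-logarithm, which tends to $0$. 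The remaining probabilities vanish as well: $\mathbb P(G_2^c)\leqslant c/\ln(1+R_h)\to0$ by \cref{eq:yh-regu}, while $\mathbb P(G_3^c)\leqslant c/R_{h,\tau}^2\to0$ follows from the $h,\tau$-uniform stability bound $\sup_{h,\tau}\mathbb E\big[\max_{0\leqslant j\leqslant J}\norm{Y_j}_{\mathbb L^2}^2\big]<\infty$, obtained from the cancellation $\dual{G(u_h),u_h}=0$ on $\mathbb L_{h,\mathrm{div}}$ via a discrete Itô/energy estimate (cf.~Section~3 of \cite{Prohl2013}) combined with Chebyshev's inequality.

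The main obstacle is the simultaneous calibration of the three thresholds so that the exponential losses $\exp(cR^2)$ and $\exp(cR_h^2)$ intrinsic to the stability and error estimates are absorbed by the genuine convergence rates $h^\varrho$ and $\tau$. The logarithmic choices $R^2\sim\ln\tfrac1h$ and $R_h^2\sim\ln\tfrac1\tau$ are dictated exactly by this requirement: they convert the exponentials into small negative powers of $h$ and $\tau$, and the structural inequalities $\varrho>\tfrac{\alpha}{2}$ and $\beta<1$ furnish precisely the slack needed for the residual powers to remain positive, while keeping the thresholds large enough that \cref{eq:P-tRrho,eq:yh-regu} force the irregular sets to have vanishing probability. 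Once these choices are fixed, all five terms of the union bound tend to $0$ as $h,\tau\to0$ with $\tau\leqslant h$, establishing \cref{eq:y-Y}; the constraint $\tau\leqslant h$ enters only to dominate $\ln\tfrac1h$ by $\ln\tfrac1\tau$ in the temporal estimate.
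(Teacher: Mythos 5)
Your proposal is correct and follows essentially the same route as the paper: a union bound over the regular sets $\{t_{R,\varrho}=T\}$ and $\Omega_{\varrho,R_h,R_{h,\tau}}^{h,\tau}$, Markov/Chebyshev applied to the error bounds \cref{eq:yR-yhR} and \cref{eq:local-conv}, uniform moment bounds on $\max_j\norm{Y_j}_{\mathbb L^2}$ from \cite{Prohl2013}, and the logarithmic calibrations $R^2\sim\ln\frac1h$, $R_h^2\sim\ln\frac1\tau$ exploiting the slack $2\varrho>\alpha$ and $\beta<1$, with $\tau\leqslant h$ used only to dominate $\ln\frac1h$ by $\ln\frac1\tau$. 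The only differences (fourth- versus second-moment Chebyshev for the spatial term, and a simpler choice of $R_{h,\tau}$) are immaterial.
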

\begin{proof}
Without loss of generality, we may assume that \( h \) and \( \tau \) are sufficiently small. Let \( c \) denote a generic positive constant, independent of the discretization parameters \( h \) and \( \tau \), whose value may vary between occurrences.
 Select any \( \gamma_0 \in (0, 2\varrho - \alpha) \) and \( \gamma_1 \in (0, 1 - \beta) \). Define
\[
R = \sqrt{\frac{\gamma_0}{c_4} \ln \frac{1}{h}},
 \quad R_h = \sqrt{\frac{\gamma_1}{c_6} \ln \frac{1}{\tau}},
  \quad R_{h,\tau} = \sqrt{
    \frac{\tau^{\beta+\gamma_1 - 1}}{
      c_5\big(\ln\frac1h\big)\ln\big(1+\ln\frac1\tau\big)
    }
  },
\]
where \( c_4 \), \(c_5\), and \( c_6 \) will be specified later in the proof.
By invoking Lemma 3.1 from \cite{Prohl2013}, which states that
\( \mathbb{E}[\max_{1\leqslant j\leqslant J} \|Y_j\|_{\mathbb{L}^2}^4] \) is uniformly bounded with respect to \( h \) and \( \tau \), we obtain the probability estimate
\[
\mathbb{P}\left( \max_{1 \leqslant j \leqslant J} \|Y_j\|_{\mathbb{L}^2} > R_{h,\tau} \right) \leqslant \frac{c}{R_{h,\tau}^4}.
\]
Using Theorem \ref{thm:yh-regu}, we have
\[
\mathbb{P}\left( \|y_h\|_{C([0,T];\dot{H}_h^{\varrho,2})} > R_h \right) \leqslant \frac{c}{\ln(1 + R_h)}.
\]
Thus, for the set \( \Omega_{\varrho,R_h,R_{h,\tau}}^{h,\tau} \) defined in \cref{eq:local_sample_set}, we have
\[
\mathbb{P}\left( \Omega_{\varrho,R_h,R_{h,\tau}}^{h,\tau} \right) \geqslant 1 - \frac{c}{\ln(1 + R_h)} - \frac{c}{R_{h,\tau}^4}.
\]
By combining this with \cref{eq:P-tRrho}, we obtain
\[
\mathbb{P}\left( \Omega_{\varrho,R_h,R_{h,\tau}}^{h,\tau} \cap \{t_{R,\varrho} = T\} \right) \geqslant 1 - \frac{c}{\ln(1 + R)} - \frac{c}{\ln(1 + R_h)} - \frac{c}{R_{h,\tau}^4},
\]
where \( t_{R,\varrho} \) is as defined in \cref{eq:tR}.
Integrating this inequality with the error bounds (\ref{eq:yR-yhR}) and (\ref{eq:local-conv}), we establish the existence of constants \( c_0, \ldots, c_6 \), independent of \( h \), \( \tau \), and \( \epsilon \), such that
\[
\begin{aligned}
& \mathbb{P}\left\{ \frac{\max_{1 \leqslant j \leqslant J} \|y(j\tau) - Y_j\|_{\mathbb{L}^2}^2}{h^\alpha + \tau^\beta} \geqslant \epsilon \right\} \\
& \quad \leqslant \frac{c_0}{\ln(1 + R)} + \frac{c_1}{\ln(1 + R_h)} + \frac{c_2}{R_{h,\tau}^4} \\
& \qquad + \frac{c_3 h^{2\varrho-\alpha} (\ln \frac{1}{h})^2 \exp(c_4 R^2) + c_5 \tau^{1-\beta} \exp(c_6 R_h^2) (1 + \ln(\frac{1}{h}) R_{h,\tau}^2)}{\epsilon}.
\end{aligned}
\]
Therefore, the desired convergence result \cref{eq:y-Y} follows from the observation that the right-hand side of the preceding inequality converges to zero as \( \tau \leqslant h \to 0 \). This concludes the proof.
\end{proof}

\begin{remark}
  \label{rem:pressure-conv}
  The theoretical findings of this study are directly applicable to the SNSEs of the following form:
  \begin{equation*}
    \begin{cases}
      \begin{aligned}
        \mathrm{d}z(t,x) &= \Bigl[ \Delta z(t,x) - (z(t,x) \cdot \nabla)z(t,x) - \nabla\psi(t,x) \Bigr]\,\mathrm{d}t \\
                         &\quad + \sum_{n\in\mathbb{N}} f_n(x,z(t,x)) \, \mathrm{d}\beta_n(t), && t\in[0,T], \, x \in \mathcal{O}, \\
        \nabla \cdot z(t,x) &= 0, && t\in[0,T], \, x \in \mathcal{O}, \\
        z(t,x) &= 0, && t \in [0,T], \, x \in \partial\mathcal{O}, \\
        z(0,x) &= z_0(x), && x \in \mathcal{O}.
      \end{aligned}
    \end{cases}
  \end{equation*}
  In the context of global mild solutions, it is clear that the process \( y \) from the model
  problem \cref{eq:model} is identical to the above process \( z \), provided that \( y_0 = z_0 \).
  Moreover, the pressure term \( \psi \) can be represented as:
  \[
    \psi(t,x) = \varphi(t,x) + \frac{\mathrm{d}}{\mathrm{d}t} \sum_{n\in\mathbb N}\int_0^{t}
    \widetilde\varphi_n(s,x) \, \mathrm{d}\beta_n(s),
    \quad t \in (0,T), \, x \in \mathcal O,
  \]
  where \( \varphi \) and \( \widetilde\varphi_n \), \( n \in \mathbb N \), are the pressure-related processes in \cref{eq:model},
  and \( \frac{\mathrm{d}}{\mathrm{d}t} \) denotes the generalized
  first-order time derivative.
  Using the convergence estimates from \cref{thm:yR-yhR,thm:yhR-YR},
  one can establish pathwise uniform convergence in \( \mathbb L^2 \) for the process
  \( \varphi \) with respect to probability, similar to the convergence shown in \cref{eq:y-Y}.
  However, such convergence is not achievable for the process \( \psi \) due to its limited temporal regularity.
\end{remark}

      \subsection{Some technical estimates}
      \label{ssec:technical_esti}

      This subsection presents some technical estimates that are instrumental in the proof of Theorem \ref{thm:yhR-YR}.
\begin{lemma}
    \label{lem:auxi}
    Let \( 0 \leqslant j_0 \leqslant J-1 \) and \( \varepsilon \in (0,1] \). For any \( \alpha \in [0,2) \) and \( p \in \left(\frac{2}{2-\alpha}, \infty\right) \) (or \( \alpha=1 \) and \( p=2 \)), there exists a constant \( c \), independent of \( h \), \( \tau \), \( j_0 \), \( \varepsilon \), and \( g_h \), such that
    \[
        \max_{1 \leqslant j \leqslant j_0+1} \|Z_j\|_{\mathbb{L}^2}
        \leqslant c \|g_h\|_{L^p(0,T;\dot{H}_h^{-\alpha,2})}
    \]
    holds for all \( g_h \in L^p(0,T;\dot{H}_h^{-\alpha,2}) \), where the sequence \( (Z_j)_{j=0}^{j_0+1} \) is defined by
    \[
        \begin{cases}
            Z_0 = 0, \\
            Z_{j+1} - Z_j + \tau A_h Z_{j+1} = \int_{j\tau}^{j\tau+\tau} g_h(t) \, \mathrm{d}t, & 0 \leqslant j < j_0, \\
            Z_{j_0+1} - Z_{j_0} + \varepsilon \tau A_h Z_{j_0+1} = \int_{j_0\tau}^{(j_0+\varepsilon)\tau} g_h(t) \, \mathrm{d}t.
        \end{cases}
    \]
\end{lemma}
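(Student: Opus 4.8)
The plan is to derive the bound from the explicit solution representation of the backward-Euler recursion together with a discrete smoothing estimate for the resolvent $R_\tau := (I+\tau A_h)^{-1}$. First I would unwind the recursion: for $1 \leqslant j+1 \leqslant j_0$ one has $Z_{j+1} = \sum_{i=0}^{j} R_\tau^{\,j+1-i}\, \tilde g_i$ with $\tilde g_i := \int_{i\tau}^{i\tau+\tau} g_h(t)\,\mathrm dt$, while the final ($\varepsilon$-modified) step contributes the single extra term $(I+\varepsilon\tau A_h)^{-1}\big(Z_{j_0}+\int_{j_0\tau}^{(j_0+\varepsilon)\tau} g_h(t)\,\mathrm dt\big)$. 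Since $A_h$ is self-adjoint and positive-definite on $\dot H_h^{0,2}$ with spectrum bounded away from zero uniformly in $h$, spectral calculus yields the discrete smoothing estimate $\norm{A_h^{\alpha/2}R_\tau^{k}}_{\mathcal L(\mathbb L^2,\mathbb L^2)} \leqslant c\,(k\tau)^{-\alpha/2}$ for every $k \geqslant 1$, exactly as in the proof of \cref{lem:Sh}(i); equivalently $\norm{R_\tau^k}_{\mathcal L(\dot H_h^{-\alpha,2},\mathbb L^2)} \leqslant c(k\tau)^{-\alpha/2}$ after commuting $A_h^{\alpha/2}$ with $R_\tau^k$.

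Next I would estimate in $\mathbb L^2$. Bounding $\norm{\tilde g_i}_{\dot H_h^{-\alpha,2}} \leqslant \tau^{1/p'}\big(\int_{i\tau}^{i\tau+\tau}\norm{g_h}_{\dot H_h^{-\alpha,2}}^p\,\mathrm dt\big)^{1/p} =: \tau^{1/p'}G_i$ by Hölder's inequality (with $p'=p/(p-1)$), the representation gives $\norm{Z_{j+1}}_{\mathbb L^2} \leqslant c\,\tau^{1/p'}\sum_{k=1}^{j+1}(k\tau)^{-\alpha/2}G_{j+1-k}$. A discrete Hölder inequality with exponents $p'$ and $p$ then separates this convolution; the crucial point is that the hypothesis $p>\frac{2}{2-\alpha}$ is equivalent to $\alpha p'/2<1$, so the partial sum $\sum_{k=1}^{j+1}(k\tau)^{-\alpha p'/2} = \tau^{-\alpha p'/2}\sum_{k=1}^{j+1}k^{-\alpha p'/2} \leqslant c\,\tau^{-\alpha p'/2}(j+1)^{1-\alpha p'/2}$ is controlled. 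Collecting the powers of $\tau$ and $(j+1)$ produces the factor $\big((j+1)\tau\big)^{1/p'-\alpha/2}$, which is bounded by $T^{1/p'-\alpha/2}$ because $1/p'-\alpha/2>0$ (again equivalent to $p>\frac{2}{2-\alpha}$) and $(j+1)\tau\leqslant T$, yielding $\norm{Z_{j+1}}_{\mathbb L^2}\leqslant c\norm{g_h}_{L^p(0,T;\dot H_h^{-\alpha,2})}$ with constant uniform in all parameters. The final $\varepsilon$-step is handled directly: $\norm{(I+\varepsilon\tau A_h)^{-1}}_{\mathcal L(\mathbb L^2,\mathbb L^2)}\leqslant 1$ controls the already-estimated $Z_{j_0}$ contribution, and the smoothing bound $\norm{A_h^{\alpha/2}(I+\varepsilon\tau A_h)^{-1}}_{\mathcal L(\mathbb L^2,\mathbb L^2)}\leqslant c(\varepsilon\tau)^{-\alpha/2}$ combined with $\norm{\int_{j_0\tau}^{(j_0+\varepsilon)\tau}g_h\,\mathrm dt}_{\dot H_h^{-\alpha,2}}\leqslant(\varepsilon\tau)^{1/p'}\norm{g_h}_{L^p}$ gives a contribution $\leqslant c(\varepsilon\tau)^{1/p'-\alpha/2}\norm{g_h}_{L^p}\leqslant c\norm{g_h}_{L^p}$.

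The borderline case $\alpha=1$, $p=2$ falls outside the summability argument above—there $\alpha p'/2=1$ and $\sum_k k^{-1}$ diverges logarithmically in $T/\tau$—so I would instead treat it by a direct discrete energy estimate. Testing the recursion with $Z_{j+1}$, using $\dual{Z_{j+1}-Z_j,Z_{j+1}}\geqslant\tfrac12\big(\norm{Z_{j+1}}_{\mathbb L^2}^2-\norm{Z_j}_{\mathbb L^2}^2\big)$, the coercivity $\dual{A_hZ_{j+1},Z_{j+1}}=\norm{Z_{j+1}}_{\dot H_h^{1,2}}^2$, and Young's inequality on $\dual{\tilde g_j,Z_{j+1}}\leqslant\norm{\tilde g_j}_{\dot H_h^{-1,2}}\norm{Z_{j+1}}_{\dot H_h^{1,2}}$, the $\dot H_h^{1,2}$-terms absorb, and summation over $j$ telescopes to $\norm{Z_m}_{\mathbb L^2}^2\leqslant\norm{g_h}_{L^2(0,T;\dot H_h^{-1,2})}^2$, with the $\varepsilon$-step identical. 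The main obstacle is precisely this borderline case, together with verifying that the exponent bookkeeping in the generic case closes uniformly in $h$, $\tau$, $j_0$, and $\varepsilon$; the uniformity in $\varepsilon$ is exactly why the last time step must be estimated separately rather than folded into the convolution sum.
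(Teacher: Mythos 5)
Your proof is correct, but it follows a different (and more self-contained) route than the paper. The paper does not prove \cref{lem:auxi} directly: in the remark following \cref{lem:DSMLP} it invokes the discrete maximal-regularity theorem of Ashyralyev and Sobolevskii (Theorem~3.2 in Chapter~2 of \cite{Ashyralyev2012}) together with interpolation results from \cite{Lunardi2018}, and only mentions in passing that the estimate ``can be derived directly via spectral decomposition of $A_h$''. Your argument carries out exactly that alternative in full: since $A_h$ is self-adjoint and positive definite, the elementary inequality $(k\tau\lambda)^{\alpha/2}\leqslant(1+\tau\lambda)^{k}$ (valid for $\alpha\leqslant 2$ because $(kx)^{\alpha/2}\leqslant 1+kx\leqslant(1+x)^{k}$ for $x\geqslant 0$) yields the resolvent smoothing bound $\norm{A_h^{\alpha/2}(I+\tau A_h)^{-k}}_{\mathcal L(\mathbb L^2,\mathbb L^2)}\leqslant(k\tau)^{-\alpha/2}$, after which your exponent bookkeeping closes: $p>\frac{2}{2-\alpha}$ is indeed equivalent both to $\alpha p'/2<1$ (summability of the convolution kernel) and to $1/p'-\alpha/2>0$, the partial sums produce the harmless factor $\big((j+1)\tau\big)^{1/p'-\alpha/2}\leqslant T^{1/p'-\alpha/2}$, and the separate treatment of the last step via the contraction $\norm{(I+\varepsilon\tau A_h)^{-1}}_{\mathcal L(\mathbb L^2,\mathbb L^2)}\leqslant 1$ together with $(\varepsilon\tau)^{1/p'-\alpha/2}\leqslant T^{1/p'-\alpha/2}$ is exactly what uniformity in $\varepsilon$ requires. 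Your diagnosis that the borderline case $\alpha=1$, $p=2$ escapes the summability argument ($\alpha p'/2=1$) and must instead be handled by the telescoping energy estimate is also correct, and consistent with the fact that for $p=2$ the generic argument already covers all $\alpha<1$. As to what each approach buys: the paper's citation route obtains the lemma at once from a general discrete maximal-regularity framework that does not require self-adjointness of the generator and extends beyond Hilbert-space settings; your route exploits the symmetry of $A_h$ to give an elementary, fully explicit proof in which the independence of the constant from $h$, $\tau$, $j_0$, and $\varepsilon$ --- precisely the uniformity the paper needs but leaves implicit in the references --- is transparent.
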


      \begin{lemma}
        \label{lem:DSMLP}
        Suppose $g_h \in L_\mathbb F^2(\Omega\times(0,T);\gamma(H,\dot H_h^{0,2}))$.
       Define the sequence $(Z_j)_{j=0}^J$ by
        \[
          Z_{j+1} - Z_j + \tau A_h Z_j = \int_{{j\tau}}^{{j\tau+\tau}} g_h(t) \, \mathrm{d}W_H(t),
          \quad 0 \leqslant j < J,
        \]
        with $Z_0 = 0$. Then, there exists a constant $c > 0$, independent of $ g_h $,
        $h$, and $\tau$, such that
        \[
          \biggl(
            \mathbb{E} \max_{1 \leqslant j \leqslant J} \|Z_j\|_{\mathbb{L}^2}^2
          \biggr)^{1/2}
          \leqslant c\|g_h\|_{L^2(\Omega\times(0,T);\gamma(H,\dot H_h^{0,2}))}.
        \]
      \end{lemma}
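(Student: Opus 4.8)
The plan is to solve the recursion by discrete variation of constants and then prove a discrete stochastic maximal $L^2$-inequality. Setting $E_h := I - \tau A_h$ and $\Delta M_i := \int_{i\tau}^{(i+1)\tau} g_h(t)\,\mathrm{d}W_H(t)$, the recursion $Z_{j+1}-Z_j+\tau A_h Z_j = \Delta M_j$ reads $Z_{j+1} = E_h Z_j + \Delta M_j$, and unrolling from $Z_0 = 0$ yields the discrete stochastic convolution $Z_j = \sum_{i=0}^{j-1} E_h^{\,j-1-i}\Delta M_i$. The goal then reduces to bounding $\big(\mathbb E\max_{1\leqslant j\leqslant J}\norm{Z_j}_{\mathbb L^2}^2\big)^{1/2}$ by $\norm{g_h}_{L^2(\Omega\times(0,T);\gamma(H,\dot H_h^{0,2}))}$, which I would obtain first for a fixed index $j$ and then upgrade to the maximum over $j$.

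For fixed $j$, the increments $\Delta M_i$ are stochastic integrals over the disjoint intervals $(i\tau,(i+1)\tau]$ and each is $\mathcal F_{(i+1)\tau}$-measurable, so they are orthogonal martingale increments; since $A_h$ is symmetric, $E_h$ is self-adjoint and commutes with its powers, and pulling the deterministic operator inside the integral gives, via Itô's isometry, $\mathbb E\norm{Z_j}_{\mathbb L^2}^2 = \sum_{i=0}^{j-1}\mathbb E\int_{i\tau}^{(i+1)\tau}\norm{E_h^{\,j-1-i} g_h(t)}_{\gamma(H,\mathbb L^2)}^2\,\mathrm{d}t$. To pass to the maximum over $j$ I would split $Z_j = M_j + N_j$ with $M_j := \sum_{i<j}\Delta M_i$ and $N_j := \sum_{i<j}(E_h^{\,j-1-i}-I)\Delta M_i$. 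The term $M_j$ is a genuine $\mathbb L^2$-valued discrete martingale, so Doob's inequality together with Itô's isometry gives $\mathbb E\max_j\norm{M_j}_{\mathbb L^2}^2 \leqslant 4\,\mathbb E\norm{M_J}_{\mathbb L^2}^2 \leqslant c\norm{g_h}_{L^2(\Omega\times(0,T);\gamma(H,\dot H_h^{0,2}))}^2$, unconditionally. For the remainder I would use the identity $E_h^{\,k}-I = -\tau A_h\sum_{l=0}^{k-1}E_h^{\,l}$ to extract a smoothing factor $\tau A_h$ and control $N_j$ by the discrete stochastic maximal-regularity machinery already invoked for the semidiscrete estimate \cref{lem:Sh}(iii) (cf.\ \cite{HytonenWeis2017,Neerven2012b,Prato2014}).

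\textbf{Main obstacle.} The crux is the power-boundedness $\norm{E_h^{\,k}}_{\mathcal L(\mathbb L^2,\mathbb L^2)}\leqslant 1$ of the forward-Euler propagator, equivalently the control of the remainder $N_j$: all cross-term cancellations and the passage from the fixed-$j$ bound to the maximum collapse if the powers $E_h^{\,k}$ are not uniformly bounded. Because $A_h$ is symmetric positive definite, $\norm{E_h}_{\mathcal L(\mathbb L^2,\mathbb L^2)}\leqslant 1$ holds if and only if $\tau\,\lambda_{\max}(A_h)\leqslant 2$, and the inverse estimate \eqref{eq:inverse} gives $\lambda_{\max}(A_h)\leqslant c h^{-2}$; thus power-boundedness is available precisely under a CFL-type restriction $\tau\leqslant c h^2$, and without it the high modes are amplified geometrically so that the asserted $h,\tau$-uniform bound cannot hold. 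I would therefore carry out the argument under (or by making explicit) this step-size condition. I would also record that this obstacle disappears entirely for the backward-Euler propagator $(I+\tau A_h)^{-1}$, which is an unconditional contraction admitting the discrete analytic-semigroup bounds $\norm{\tau A_h (I+\tau A_h)^{-k}}_{\mathcal L(\mathbb L^2,\mathbb L^2)}\leqslant c$; this matches the implicit viscous term $\tau A_h Z_{j+1}$ used in the companion \cref{lem:auxi} and in the term $\chi^{(4)}_{j,R_h}$ to which the lemma is applied in \eqref{eq:E4}, strongly suggesting the intended recursion carries $A_h Z_{j+1}$, for which the estimate holds with no restriction relating $h$ and $\tau$.
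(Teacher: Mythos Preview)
The paper does not give a detailed proof of this lemma; it simply records in a remark that the stability estimate ``can be found in \cite[Theorem~2.6]{Gyongy2007} and \cite[Proposition~5.4]{Neerven2022}.'' So there is no argument to compare against step by step.

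Your diagnosis of the main obstacle is correct and, in fact, decisive. With the explicit term $\tau A_h Z_j$ exactly as written, the propagator $E_h = I - \tau A_h$ has operator norm $\max_k |1-\tau\lambda_k|$, and since $\lambda_{\max}(A_h)\sim h^{-2}$ by \eqref{eq:inverse}, the powers $E_h^{\,k}$ blow up geometrically unless $\tau \leqslant c h^2$. No stochastic maximal-regularity argument can compensate for this, so the bound asserted uniformly in $h$ and $\tau$ is false for the forward recursion. Your reading that the intended recursion is the implicit one, $Z_{j+1}-Z_j+\tau A_h Z_{j+1}=\Delta M_j$, is confirmed on two counts: the only place the lemma is invoked is for $\chi_{j,R_h}^{(4)}$, which is defined with the implicit term $\tau A_h \chi_{j+1,R_h}^{(4)}$; and the first cited reference is explicitly about \emph{implicit} approximations. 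For the backward-Euler propagator $(I+\tau A_h)^{-1}$, which is an unconditional contraction on $\dot H_h^{0,2}$, your martingale-plus-remainder splitting goes through cleanly (Doob on $M_j$, discrete analytic bounds $\|\tau A_h(I+\tau A_h)^{-k}\|\leqslant c/k$ on $N_j$), and this is essentially what the references deliver. In short: you have found a typo in the statement, your proposed route is sound for the corrected statement, and it aligns with what the paper's citations provide.
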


      \begin{remark}
        The stability estimates in \cref{lem:auxi,lem:DSMLP}
        are standard. \cref{lem:auxi} follows from Theorem~3.2 in \cite[Chapter 2]{Ashyralyev2012},
        combined with Propositions 1.3, 1.4, and 6.2, and Theorem~4.36 in \cite{Lunardi2018}.
        Alternatively, it can be derived directly via spectral decomposition of $A_h$.
        The stability estimate in \cref{lem:DSMLP} can be found in
        \cite[Theorem~2.6]{Gyongy2007} and \cite[Proposition~5.4]{Neerven2022}.
      \end{remark}

\begin{lemma}
  \label{lem:yh-interp}
  Under the hypotheses of Theorem \ref{thm:yhR-YR},
  let $ t_{R_h}^{h,\varrho} $ be the stopping time defined in \cref{eq:tR-new},
  and let $ y_{h,R_h} $ be given by \cref{eq:yhR1}.
  For any $ \alpha \in [0, \varrho - 1] $, \( p \in [2,\infty) \),
  and $ R_h > 1 $, we have
  \begin{equation}
    \label{eq:yh-interp}
    \begin{split}
    & \mathbb{E}  \Biggl[ \,
      \sum_{j=0}^{J-1} \int_{{j\tau}}^{{j\tau+\tau}}
      \norm{y_{h,R_h}(t) - y_{h,R_h}(j\tau)}_{\dot{H}_h^{\alpha,2}}^p
      + \norm{y_{h,R_h}(t) - y_{h,R_h}({j\tau+\tau})}_{\dot{H}_h^{\alpha,2}}^p
      \, \mathrm{d}t
    \,\Biggr] \\
    \leqslant{}
    & c \tau^{\frac{p}2} R_h^p \left( 1 + \tau^{\frac{2 - 3\alpha}4p} R_h^p \right),
    \end{split}
  \end{equation}
  where the constant $ c $ is independent of $ h $, $ \tau $, and $ R_h $.
\end{lemma}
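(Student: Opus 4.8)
The plan is to estimate the temporal increments of the truncated semidiscrete solution $y_{h,R_h}$ by writing them through the integral (mild/variational) form of the semidiscretization \cref{eq:yh} and then controlling each constituent term on the stopped interval. First I would fix $\alpha\in[0,\varrho-1]$, $p\in[2,\infty)$, and $R_h>1$, and observe that it suffices to bound a generic increment $y_{h,R_h}(t)-y_{h,R_h}(s)$ in $\dot H_h^{\alpha,2}$ for $0\le s<t$ with $t-s\le\tau$, since both terms in \cref{eq:yh-interp} are of this type (with $s=j\tau$ or $s=t$ and the other endpoint inside $[j\tau,j\tau+\tau]$). On the event $\{t\le t_{R_h}^{h,\varrho}\}$ the stopping time \cref{eq:tR-new} guarantees $\norm{y_{h,R_h}}_{\dot H_h^{\varrho,2}}\le R_h$ uniformly, and outside this event the increment vanishes by the truncation in \cref{eq:yhR1}.

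\textbf{Decomposition and term-by-term bounds.} From \cref{eq:yh} the increment splits into a Stokes (linear) part, a convective (drift) part, and a stochastic part:
\begin{align*}
  y_{h,R_h}(t)-y_{h,R_h}(s)
  &= -\int_s^t \mathbbm{1}_{[0,t_{R_h}^{h,\varrho}]}(r)\,A_h y_{h,R_h}(r)\,\mathrm{d}r \\
  &\quad -\int_s^t \mathbbm{1}_{[0,t_{R_h}^{h,\varrho}]}(r)\,\mathcal P_h G(y_{h,R_h}(r))\,\mathrm{d}r \\
  &\quad +\mathcal P_h\int_s^t \mathbbm{1}_{[0,t_{R_h}^{h,\varrho}]}(r)\,F(y_{h,R_h}(r))\,\mathrm{d}W_H(r).
\end{align*}
For the linear term I would use $\norm{A_h y_{h,R_h}}_{\dot H_h^{\alpha,2}}=\norm{y_{h,R_h}}_{\dot H_h^{\alpha+2,2}}$ together with the inverse estimate \cref{eq:inverse}, written in the semidiscrete scale as $\norm{u_h}_{\dot H_h^{\alpha+2,2}}\le ch^{-(\alpha+2-\varrho)}\norm{u_h}_{\dot H_h^{\varrho,2}}$; however, a cleaner route that avoids $h$-dependence is to interpolate the $\dot H_h^{\alpha+2,2}$ bound against the $\dot H_h^{\varrho,2}$ bound on the stopped interval, exploiting $\alpha\le\varrho-1$ so that the needed negative power of $\tau$ matches the stated exponent $\tfrac{2-3\alpha}{4}p$. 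The convective term is handled via \cref{eq:PhG}, which gives $\norm{\mathcal P_h G(u_h)}_{\dot H_h^{2\beta-2,2}}\le c\norm{u_h}_{\dot H_h^{\beta,2}}^2$ for $\beta\in(1/2,1)$; choosing $\beta$ appropriately and bounding $\norm{y_{h,R_h}}_{\dot H_h^{\varrho,2}}\le R_h$ yields a factor $R_h^2$, which after integrating the length-$\tau$ interval and raising to the $p$th power produces the $R_h^p\cdot\tau^{\cdots}R_h^p$ structure. The stochastic term is estimated by the Burkholder–Davis–Gundy inequality together with Itô's isometry and the growth bound \cref{lem:F}(ii), giving the leading-order $\tau^{p/2}R_h^p$ contribution.

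\textbf{Assembling the estimate.} After bounding each of the three pieces in $L^p(\Omega)$ for a single increment over an interval of length at most $\tau$, I would sum over $j=0,\dots,J-1$ and integrate in $t$. The key bookkeeping is that each interval contributes a factor $\tau$ from the $\mathrm dt$-integral, and there are $J=T/\tau$ intervals, so the factors of $\tau$ combine to give exactly the powers appearing in \cref{eq:yh-interp}; the Hölder estimate on the drift integral costs an extra $\tau^{p-1}$ that must be tracked carefully, while the stochastic term's $\tau^{p/2}$ arises from the square-root scaling of the quadratic variation. Collecting the dominant stochastic contribution $\tau^{p/2}R_h^p$ and the subdominant drift/linear contributions $\tau^{p/2}\cdot\tau^{\frac{2-3\alpha}{4}p}R_h^{2p}$ gives the claimed bound $c\tau^{p/2}R_h^p(1+\tau^{\frac{2-3\alpha}{4}p}R_h^p)$.

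\textbf{Main obstacle.} The delicate point is the \emph{sharp matching of the $\tau$-exponents}, particularly in the linear Stokes term. Naively bounding $A_h y_{h,R_h}$ in $\dot H_h^{\alpha,2}$ incurs a negative power of $h$ via \cref{eq:inverse}, which would spoil the $h$-uniform constant demanded by the lemma. The correct strategy is to trade regularity against the interval length $\tau$ rather than against $h$: one bounds the integrand in the \emph{lower} norm $\dot H_h^{\alpha,2}$ using only the $\dot H_h^{\varrho,2}$-control of $y_{h,R_h}$ (valid since $\alpha+2>\varrho$, so this requires care) by instead estimating the \emph{difference} via a smoothing argument — i.e., using $\norm{(I-S_h(t-s))u_h}_{\dot H_h^{\alpha,2}}\le c(t-s)^{(\varrho-\alpha)/2}\norm{u_h}_{\dot H_h^{\varrho,2}}$ from \cref{lem:Sh}(i), which is precisely the $h$-uniform semigroup regularization that converts the time increment into the fractional power $\tau^{(\varrho-\alpha)/2}$ without any inverse inequality. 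Reconciling this semigroup-based increment bound with the variational-form decomposition above, and verifying that the resulting exponent $(\varrho-\alpha)/2$ together with the drift's scaling reproduces exactly $\tfrac{2-3\alpha}{4}$, is where the real work lies.
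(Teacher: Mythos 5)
There is a genuine gap. Your primary decomposition — the variational (integral) form of the increment — fails at the linear Stokes term, and you concede as much in your final paragraph: bounding $\int_s^t \mathbbm{1}_{[0,t_{R_h}^{h,\varrho}]}(r)A_h y_{h,R_h}(r)\,\mathrm{d}r$ in $\dot H_h^{\alpha,2}$ requires control of $\norm{y_{h,R_h}}_{\dot H_h^{\alpha+2,2}}$, and since $\alpha+2>\varrho$ strictly, there is no high-norm bound to ``interpolate against''; the only available route is an inverse estimate, which destroys $h$-uniformity. The repair you name (the $h$-uniform smoothing $\norm{(I-S_h(t-s))u_h}_{\dot H_h^{\alpha,2}}\leqslant c(t-s)^{(\varrho-\alpha)/2}\norm{u_h}_{\dot H_h^{\varrho,2}}$) is indeed the paper's route — the paper abandons the variational form entirely and writes $y_{h,R_h}(t)=I_1(t\wedge t_{R_h}^{h,\varrho})+I_2(t\wedge t_{R_h}^{h,\varrho})+I_3(t\wedge t_{R_h}^{h,\varrho})$ with $I_1=S_h(\cdot)\mathcal P_h y_0$, $I_2$ the deterministic convolution of $S_h$ with the truncated drift, and $I_3$ the stochastic convolution — but you explicitly leave the reconciliation and the exponent verification undone (``where the real work lies''), and that is precisely where the content of the lemma sits. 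In particular, the exponent $\tfrac{2-3\alpha}{4}$ does not come from the Stokes term at all: it comes from showing $\norm{\mathbbm{1}_{[0,t_{R_h}^{h,\varrho}]}\mathcal P_h G(y_h)}_{L^\infty(0,T;\dot H_h^{0,2})}\leqslant cR_h^2$ almost surely and then deducing from \cref{lem:Sh}(i) that $I_2$ is pathwise H\"older continuous of exponent $1-3\alpha/4$ with values in $\dot H_h^{\alpha,2}$, which after raising to the $p$th power yields $\tau^{(1-3\alpha/4)p}R_h^{2p}=\tau^{p/2}\cdot\tau^{\frac{2-3\alpha}{4}p}R_h^{2p}$; your bookkeeping (``an extra $\tau^{p-1}$'' from the drift) does not reproduce this.

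Two further steps in your sketch are insufficient as stated. First, for the stochastic part you invoke Burkholder--Davis--Gundy with the growth bound of \cref{lem:F}(ii), but that bound lives in $\gamma(H,\mathbb L^2)$ and only controls the $\dot H_h^{0,2}$-increment; since $\alpha$ may be positive (up to $\varrho-1<1/2$), one needs \cref{lem:F}(iii) together with \cref{lem:Ph}(vi) to place $\mathcal P_h F(y_h)$ in $\gamma(H,\dot H_h^{(1+2\alpha)/4,2})$, as the paper does. Second, once you switch to the mild form, the increments are taken at the stopped times $t\wedge t_{R_h}^{h,\varrho}$ and $j\tau\wedge t_{R_h}^{h,\varrho}$; your remark that ``outside this event the increment vanishes'' ignores the case where the stopping time falls inside $(j\tau,t)$, and the increment of a stochastic convolution over a (random) interval is not itself a stochastic integral over that interval, so a plain BDG bound per fixed times does not suffice. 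The paper resolves this by establishing pathwise H\"older regularity $\lVert I_3\rVert_{L^p(\Omega;C^{1/2-1/p}([0,T];\dot H_h^{\alpha,2}))}\leqslant cR_h$ via the factorization method, which handles random endpoints; this ingredient is entirely absent from your proposal. In short, your sketch correctly diagnoses the obstruction and points toward the paper's mild-formulation strategy, but the three decisive estimates — the $C^{1-3\alpha/4}$ regularity of $I_2$, the factorization-based $C^{1/2-1/p}$ regularity of $I_3$, and the treatment of stopped endpoints — are missing, so the argument as written does not prove \cref{eq:yh-interp}.
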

\begin{proof}
We decompose \( y_{h,R_h}(t) \) almost surely as:
\[
y_{h,R_h}(t) = I_1(t \wedge t_{R_h}^{h,\varrho}) + I_2(t \wedge t_{R_h}^{h,\varrho}) + I_3(t \wedge t_{R_h}^{h,\varrho}),
\quad\text{for all $ t \in [0,T] $},
\]
where, for any $ t \in [0,T] $,
\begin{align*}
I_1(t) &:= S_h(t)\mathcal{P}_hy_0, \\
I_2(t) &:= -\left[ S_h \ast \left(\mathbbm{1}_{[0,t_{R_h}^{h,\varrho}]} \mathcal{P}_hG(y_h)\right) \right](t), \\
I_3(t) &:= \left[ S_h \diamond \left(\mathbbm{1}_{[0,t_{R_h}^{h,\varrho}]} \mathcal{P}_hF(y_h)\right) \right](t).
\end{align*}
Using \cref{lem:Sh}(i), \cref{lem:Ph}(ii),
and the definition of $ t_{R_h}^{h,\varrho} $ in \cref{eq:tR-new},
a direct calculation yields the following estimate for $ I_1 $:
\begin{equation}
  \label{eq:I1-esti}
\sum_{j=0}^{J-1} \int_{{j\tau}}^{{j\tau+\tau}}
\left\lVert I_1(t\wedge t_{R_h}^{h,\varrho}) - I_1({j\tau}\wedge t_{R_h}^{h,\varrho})\right\rVert_{\dot{H}_h^{\alpha,2}}^p \, \mathrm{d}t
\leqslant c\tau^{1+\frac{\varrho-\alpha}{2}p}R_h^p, \quad\text{almost surely.}
\end{equation}
For \( I_2 \), the \( h \)-independent embeddings \( \dot{H}_h^{\varrho, 2} \)
into \( \mathbb{L}^{\infty} \) and \( \dot H_h^{1,2} \),
the boundedness of \( \mathcal{P}_h \) on \( \mathbb{L}^2 \),
 and the definition of \( G \) in \cref{eq:G-def} collectively imply that
\[
\left\lVert \mathbbm{1}_{[0,t_{R_h}^{h,\varrho}]}\mathcal{P}_hG(y_h) \right\rVert_{L^\infty(0,T;\dot{H}_h^{0,2})}
\leqslant c R_h^2, \quad\text{almost surely.}
\]
By \cref{lem:Sh}(i), an elementary calculation gives that
\[
  \norm{I_2}_{C^{1-3\alpha/4}([0,T];\dot H_h^{\alpha,2})}
  \leqslant cR_h^2, \quad\text{almost surely,}
\]
where $ C^{1-3\alpha/4}([0,T];\dot H_h^{\alpha,2}) $
denotes the space of $ \dot H_h^{\alpha,2} $-valued Hölder continuous functions with exponent
\( 1-3\alpha/4 \). Consequently,
\begin{equation}
  \label{eq:I2-esti}
    \sum_{j=0}^{J-1} \int_{{j\tau}}^{{j\tau+\tau}}
    \left\lVert I_2(t\wedge t_{R_h}^{h,\varrho}) - I_2({j\tau}\wedge t_{R_h}^{h,\varrho})\right\rVert_{\dot{H}_h^{\alpha,2}}^p \, \mathrm{d}t
  \leqslant c \tau^{(1-3\alpha/4)p}R_h^{2p},
  \quad\text{almost surely.}
\end{equation}
For \( I_3 \), we first note that \cref{lem:F}(iii) and \cref{lem:Ph}(vi) imply
\[
\left\lVert \mathbbm{1}_{[0,t_{R_h}^{h,\varrho}]} \mathcal{P}_h F(y_h) \right\rVert_{L^\infty(0,T;\gamma(H,\dot{H}_h^{(1+2\alpha)/4,2}))} \leqslant c R_h,
\quad\text{almost surely}.
\]
Similar to Theorem 5.15 in \cite{Prato2014}, using the factorization
formula (\cite[Theorem~5.10]{Prato2014}) and \cref{lem:Sh}(i) yields
\[
  \lVert I_3 \rVert_{L^p(\Omega; C^{1/2-1/p}([0,T]; \dot H_h^{\alpha,2}))} \leqslant c R_h.
\]
This leads to
\begin{align*}
  \mathbb{E} \, \Biggl[\,
    \sum_{j=0}^{J-1} \int_{j\tau}^{j\tau+\tau}
    \left\lVert I_3(t\wedge t_{R_h}^{h,\varrho}) - I_3(j\tau\wedge t_{R_h}^{h,\varrho})\right\rVert_{\dot{H}_h^{\alpha,2}}^p
    \, \mathrm{d}t
  \,\Biggr] \\
  \leqslant{} \mathbb{E} \, \Biggl[\,
    \sum_{j=0}^{J-1} \int_{j\tau}^{j\tau+\tau}
    \left\lVert I_3(t) - I_3(j\tau)\right\rVert_{\dot{H}_h^{\alpha,2}}^p \, \mathrm{d}t
  \,\Biggr] + c \tau^{\frac{p}{2}} R_h^p.
\end{align*}
Moreover, by Lemma \ref{lem:Sh}(i), a straightforward calculation yields
\[
  \mathbb{E} \left[\,
    \sum_{j=0}^{J-1} \int_{j\tau}^{j\tau+\tau} \left\lVert I_3(t) - I_3(j\tau) \right\rVert_{\dot{H}_h^{\alpha,2}}^p \, \mathrm{d}t
  \,\right] \leqslant c \tau^{\frac{p}{2}} R_h^p.
\]
Consequently, we derive the following estimate for \( I_3 \):
\begin{equation}
  \label{eq:I3-esti}
\mathbb{E}\Biggl[\,
\sum_{j=0}^{J-1} \int_{j\tau}^{j\tau+\tau}
\left\lVert I_3(t\wedge t_{R_h}^{h,\varrho}) - I_3(j\tau\wedge t_{R_h}^{h,\varrho}) \right\rVert_{\dot{H}_h^{\alpha,2}}^p
\, \mathrm{d}t
\,\Biggr] \leqslant c\tau^{\frac{p}{2}}R_h^p.
\end{equation}
Combining \cref{eq:I1-esti,eq:I2-esti,eq:I3-esti}, we derive the following bound:
\begin{align*}
\mathbb{E}\Biggl[\,
\sum_{j=0}^{J-1} \int_{j\tau}^{j\tau+\tau}
\norm{y_{h,R_h}(t) - y_{h,R_h}(j\tau)}_{\dot{H}_h^{\alpha,2}}^p \, \mathrm{d}t
\,\Biggr] \leqslant c\tau^{\frac{p}{2}}R_h^p \left( 1 + \tau^{\frac{2-3\alpha}4p} R_h^p\right).
\end{align*}
Similarly, we have
\begin{align*}
\mathbb{E}\Biggl[\,
\sum_{j=0}^{J-1} \int_{j\tau}^{j\tau+\tau}
\left\lVert y_{h,R_h}(t) - y_{h,R_h}(j\tau+\tau)\right\rVert_{\dot{H}_h^{\alpha,2}}^p \, \mathrm{d}t
\,\Biggr] \leqslant c\tau^{\frac{p}{2}}R_h^p \left( 1 + \tau^{\frac{2-3\alpha}4p} R_h^p\right).
\end{align*}
The combination of these inequalities establishes the desired inequality \cref{eq:yh-interp}, thereby completing the proof.
\end{proof}

\section{Conclusions}
\label{sec:conclusion}
This study presents a comprehensive analysis of a fully discrete numerical scheme for the two-dimensional stochastic Navier-Stokes equations with multiplicative noise, subject to no-slip boundary conditions. The spatial discretization employs the standard $P_3 / P_2$ Taylor-Hood finite element method, while the temporal discretization utilizes the Euler scheme. We have rigorously established the pathwise uniform convergence of the full discretization in probability, demonstrating nearly $3/2$-order spatial convergence and near half-order temporal convergence.

However, our study does not address the pressure field, and establishing comparable convergence results for it remains an open and interesting question (see Remark \ref{rem:pressure-conv}). Moreover, to the best of our knowledge, there is a lack of reported convergence rates under general spatial \( L^q \)-norms for finite-element-based approximations of the stochastic Navier-Stokes equations. We believe that conducting a thorough numerical analysis under these norms could yield valuable insights and represents a promising direction for future research.

Most significantly, establishing similar pathwise uniform convergence estimates in probability for the three-dimensional Navier-Stokes equations remains a significant challenge. A key reason for this is the widely acknowledged fact that global mild solutions generally do not exist for the three-dimensional case without the inclusion of additional regularization terms in the governing equations; see, for instance, \cite{Brzezniak2020,Rockner2009}.


\begin{thebibliography}{10}

\bibitem{Ashyralyev2012}
A.~Ashyralyev and P.~E. Sobolevskii.
\newblock {\em {Well-posedness of parabolic difference equations}}.
\newblock Birkh\"auser Basel, 2012.

\bibitem{Baldi2017}
P.~Baldi.
\newblock {\em Stochastic calculus}.
\newblock Springer, Cham, 2017.

\bibitem{Bensoussan1973}
A.~Bensoussan and R.~Temam.
\newblock {Equations stochastique du type Navier-Stokes}.
\newblock {\em J. Funct. Anal.}, 13:195--222, 1973.

\bibitem{Bessaih2014}
H.~Bessaih, Z.~Brze\'zniak, and A.~Millet.
\newblock {Splitting up method for the 2D stochastic Navier-Stokes equations}.
\newblock {\em Stoch PDE: Anal. Comp.}, 2:433--470, 2014.

\bibitem{Bessaih2019}
H.~Bessaih and A.~Millet.
\newblock {Strong $ L^2 $ convergence of time numerical schemes for the stochastic two-dimensional Navier-Stokes equations}.
\newblock {\em IMA J. Numer. Anal.}, 39:2135--2167, 2019.

\bibitem{Bessaih2022}
H.~Bessaih and A.~Millet.
\newblock {Strong rates of convergence of space-time discretization schemes for the 2D Navier-Stokes equations with additive noise}.
\newblock {\em Stoch. Dyn.}, 22:2240005, 2022.

\bibitem{Boyer2012}
F.~Boyer and P.~Fabrie.
\newblock {\em {Mathematical tools for the study of the incompressible Navier-Stokes equations and related models}}.
\newblock Springer, New York, 2012.

\bibitem{Breckner2000}
H.~Breckner.
\newblock {Galerkin approximation and the strong solution of the Navier-Stokes equation}.
\newblock {\em J. Appl. Math. Stochastic Anal.}, 13:239--259, 2000.

\bibitem{Breit2021}
D.~Breit and A.~Dodgson.
\newblock Convergence rates for the numerical approximation of the {2D} stochastic {N}avier-{S}tokes equations.
\newblock {\em Numer. Math.}, 147:553--578, 2021.

\bibitem{Breit2024}
D.~Breit, T.~C. Moyo, A.~Prohl, and J.~Wichmann.
\newblock {Mean square temporal error estimates for the 2D stochastic Navier-Stokes equations with transport noise}.
\newblock {\em arXiv:2305.109999}, 2024.

\bibitem{Breit2023}
D.~Breit and A.~Prohl.
\newblock {Error analysis for 2D stochastic Navier-Stokes equations in bounded domains with Dirichlet data}.
\newblock {\em Found. Comput. Math.}, 2023.

\bibitem{Brenner2008}
S.~C. Brenner and R.~Scott.
\newblock {\em The mathematical theory of finite element methods}.
\newblock Springer-Verlag New York, 3 edition, 2008.

\bibitem{Brezzi1991}
F.~Brezzi and R.~S. Falk.
\newblock {Stability of higher-order Hood-Taylor methods}.
\newblock {\em SIAM J. Numer. Anal.}, 28:581--590, 1991.

\bibitem{Brzezniak1991}
Z.~Brze\'zniak, M.~Capi\'nski, and F.~Flandoli.
\newblock {Stochastic partial differential equations and turbulence}.
\newblock {\em Math. Models Methods Appl. Sci.}, 1:41--59, 1991.

\bibitem{Prohl2013}
Z.~Brze\'zniak, E.~Carelli, and A.~Prohl.
\newblock Finite-element-based discretizations of the incompressible {N}avier–{S}tokes equations with multiplicative random forcing.
\newblock {\em IMA J. Numer. Anal.}, 33:771--824, 2013.

\bibitem{Brzezniak2020}
Z.~Brze\'zniak and G.~Dhariwal.
\newblock {Stochastic tamed Navier-Stokes equations on $\mathbb R^3$: the existence and the uniqueness of solutions and the existence of an invariant measure}.
\newblock {\em J. Math. Fluid Mech.}, 22:23, 2020.

\bibitem{Capinski1991}
M.~Capi\'nski and N.~J. Cutland.
\newblock {Stochastic Navier-Stokes equations}.
\newblock {\em Acta Appl. Math.}, 25:59--85, 1991.

\bibitem{Prohl2012}
E.~Carelli and A.~Prohl.
\newblock Rates of convergence for discretizations of the stochastic incompressible {N}avier-{S}tokes equations.
\newblock {\em SIAM J. Numer. Anal.}, 50:2467--2496, 2012.

\bibitem{Chueshov2010}
I.~Chueshov and A.~Millet.
\newblock {Stochastic 2D hydrodynamical type systems: well posedness and large deviations}.
\newblock {\em Appl. Math. Optim.}, 61:379--420, 2010.

\bibitem{Doghman2024}
J.~Doghman.
\newblock {Numerical approximation of the stochastic Navier-Stokes equations through artificial compressibility}.
\newblock {\em Calcolo}, 61:23, 2024.

\bibitem{Dorsek2012}
P.~D\"orsek.
\newblock {Semigroup splitting and cubature approximations for the stochastic Navier-Stokes equations}.
\newblock {\em SIAM J. Numer. Anal.}, 50:729--746, 2012.

\bibitem{Feng2021}
X.~Feng, A.~Prohl, and L.~Vo.
\newblock {Optimally convergent mixed finite element methods for the stochastic Stokes equations}.
\newblock {\em IMA J. Numer. Anal.}, 41:2280--2310, 2021.

\bibitem{FengQiu2021}
X.~Feng and H.~Qiu.
\newblock {Analysis of fully discrete mixed finite element methods for time-dependent stochastic Stokes equations with multiplicative noise}.
\newblock {\em J. Sci. Comput.}, 88:31, 2021.

\bibitem{Flandoli2023}
F.~Flandoli and E.~Luongo.
\newblock {\em {Stochastic partial differential equations in fluid mechanics}}.
\newblock Springer, Singapore, 2023.

\bibitem{Galdi2011}
G.P. Galdi.
\newblock {\em {An introduction to the mathematical theory of the Navier-Stokes equations}}.
\newblock Springer, New York, 2011.

\bibitem{Geissert2010}
M.~Geissert, M.~Hess, M.~Hieber, C.~Schwarz, and K.~Stavrakidis.
\newblock {Maximal $L^p-L^q$-estimates for the Stokes equation: a short proof of Solonnikov's theorem}.
\newblock {\em J. Math. Fluid Mech.}, 12:47--60, 2010.

\bibitem{Giga1981}
Y.~Giga.
\newblock {Analyticity of the semigroup generated by the Stokes operator in $L_r$ spaces}.
\newblock {\em Math. Z.}, 178:297--329, 1981.

\bibitem{Giga1985}
Y.~Giga.
\newblock {Domains of fractional powers of the Stokes operator in $ L_r $ spaces}.
\newblock {\em Arch. Ration. Mech. Anal.}, 85:251--265, 1985.

\bibitem{Giga1985B}
Y.~Giga and T.~Miyakawa.
\newblock {Solutions in $L_r$ of the Navier-Stokes initial value problem}.
\newblock {\em Arch. Ration. Mech. Anal.}, 89:267--281, 1985.

\bibitem{Girault1986}
V.~Girault and P.-A. Raviart.
\newblock {\em {Finite element methods for Navier-Stokes equations}}.
\newblock Springer, Berlin, 1986.

\bibitem{Girault2003}
V.~Girault and L.~R. Scott.
\newblock {A quasi-local interpolation operator presrving the discrete divergence}.
\newblock {\em Calcolo}, 40:1--19, 2003.

\bibitem{Glatt-Holtz2009}
N.~Glatt-Holtz and M.~Ziane.
\newblock {Strong pathwise solutions of the stochastic Navier-Stokes system}.
\newblock {\em Adv. Diff. Equ.}, 14:567--600, 2009.

\bibitem{Gyongy2007}
I.~Gy\"ongy and A.~Millet.
\newblock {Rate of convergence of implicit approximations for stochastic evolution equations}.
\newblock {\em In: Stochastic Differential Equations: Theory and Applications. Interdisciplinary Mathematical Sciences, vol.~2, pp.~281--310. World Scientific Publishing, Hackensack, NJ}, 2007.

\bibitem{Hausenblas2019}
E.~Hausenblas and T.~Randrianasolo.
\newblock {Time-discretization of stochastic 2-D Navier-Stokes equations with a penalty-projection method}.
\newblock {\em Numer. Math.}, 143:339--378, 2019.

\bibitem{HytonenWeis2017}
T.~Hyt\"onen, J.~van Neerven, M.~Veraar, and L.~Weis.
\newblock {\em Analysis in Banach spaces, Volume II: Probabilistic methods and operator theory}.
\newblock Springer, Cham, 2017.

\bibitem{Weis2006}
N.~Kalton, P.~Kunstmann, and L.~Weis.
\newblock {Perturbation and interpolation theorems for the $H^\infty$-calculus with applications to differential operators}.
\newblock {\em Math. Ann.}, 336:747--801, 2006.

\bibitem{Kukavica2014}
I.~Kukavica and V.~Vicol.
\newblock {On moments for strong solutions of the $2D$ stochastic Navier-Stokes equations in a bounded domain}.
\newblock {\em Asympt. Anal.}, 90:189--206, 2014.

\bibitem{LiMaSun2024}
B.~Li, S.~Ma, and W.~Sun.
\newblock {Optimal analysis of finite element methods for the stochastic Stokes equations}.
\newblock {\em arXiv:2206.13879}, 2023.

\bibitem{Lunardi2018}
A.~Lunardi.
\newblock {\em Interpolation theory}.
\newblock Edizioni della Normale, Pisa, 2018.

\bibitem{Menaldi2002}
J.~L. Menaldi and S.~S. Sritharan.
\newblock {Stochastic 2-D Navier-Stokes equation}.
\newblock {\em Appl. Math. Optim.}, 46:31, 2002.

\bibitem{Mikulevicius2004}
R.~Mikulevicius and B.~L. Rozovskii.
\newblock {Stochastic Navier-Stokes equations for turbulent flows}.
\newblock {\em SIAM J. Math. Anal.}, 35:1250--1310, 2004.

\bibitem{Mikulevicius2005}
R.~Mikulevicius and B.~L. Rozovskii.
\newblock {Global $L_2$-solutions of stochastic Navier-Stokes equations}.
\newblock {\em Ann. Probab.}, 33:137--176, 2005.

\bibitem{Ondrejat2023}
M.~Ondrej\'at, A.~Prohl, and N.~Walkington.
\newblock {Numerical approximation of nonlinear SPDE's}.
\newblock {\em Stoch PDE: Anal. Comp.}, 11:1553--1634, 2023.

\bibitem{Pardoux2014}
E.~Pardoux and A.~Rascanu.
\newblock {\em Stochastic differential equations, backward SDEs, partial differential equations}.
\newblock Springer, Cham, 2014.

\bibitem{Prato2014}
G.~Da Prato and J.~Zabczyk.
\newblock {\em Stochastic equations in infinite dimensions}.
\newblock Cambridge University Press, 2 edition, 2014.

\bibitem{Printems2001}
J.~Printems.
\newblock {On the discretization in time of parabolic stochastic partial differential equations}.
\newblock {\em M2AN Math. Model. Numer. Anal.}, 35:1055--1078, 2001.

\bibitem{Rockner2009}
M.~R\"ockner and X.~Zhang.
\newblock {Stochastic tamed 3D Navier-Stokes equations: existence, uniqueness and ergodicity}.
\newblock {\em Probab. Theory Relat. Fields}, 145:211--267, 2009.

\bibitem{Sritharan2006B}
S.~S. Sritharan and P.~Sundar.
\newblock {Large deviations for the two-dimensional Navier-Stokes equations with multiplicative noise}.
\newblock {\em Stoch. Process. Appl.}, 116:1636--1659, 2006.

\bibitem{Stenberg1990}
R.~Stenberg.
\newblock {Error analysis of some finite element methods for the Stokes problem}.
\newblock {\em Math. Comput.}, 54:495--508, 1990.

\bibitem{Tartar2007}
L.~Tartar.
\newblock {\em An Introduction to Sobolev Spaces and Interpolation Spaces}.
\newblock Springer, Berlin, 2007.

\bibitem{Thomee2006}
V.~Thom\'{e}e.
\newblock {\em Galerkin Finite Element Methods for Parabolic Problems}.
\newblock Springer, Berlin, 2006.

\bibitem{Neerven2007}
J.~van Neerven, M.~Veraar, and L.~Weis.
\newblock {Stochastic integration in UMD Banach spaces}.
\newblock {\em Ann. Probab.}, 35:1438--1478, 2007.

\bibitem{Neerven2008}
J.~van Neerven, M.~Veraar, and L.~Weis.
\newblock {Stochastic evolution equations in UMD Banach spaces}.
\newblock {\em J. Funct. Anal.}, 255:940--993, 2008.

\bibitem{Neerven2012b}
J.~van Neerven, M.~Veraar, and L.~Weis.
\newblock {Maximal $ L^p $-regularity for stochastic evolution equations}.
\newblock {\em SIAM J. Math. Anal.}, 44:1372--1414, 2012.

\bibitem{Neerven2022}
J.~van Neerven and M.~Verrar.
\newblock Maximal inequalities for stochastic convolutions and pathwise uniform convergence of time discretization schemes.
\newblock {\em Stoch PDE: Anal. Comp.}, 10:516--581, 2022.

\bibitem{Yagi2010}
A.~Yagi.
\newblock {\em Abstract parabolic evolution equations and their applications}.
\newblock Springer, Berlin, 2010.

\end{thebibliography}
      \end{document}